\definecolor{rosso}{rgb}{0.85,0,0}
\def\mau #1{{\color{magenta}#1}}
\newtheorem{theorem}{Theorem}[section]
\newtheorem{remark}[theorem]{Remark}
\newtheorem{property}[theorem]{Property}
\newtheorem{corollary}[theorem]{Corollary}
\newtheorem{proposition}[theorem]{Proposition}
\newtheorem{lemma}[theorem]{Lemma}
\numberwithin{equation}{section}
\let\non\nonumber
\def\Lip{Lip\-schitz}
\def\lhs{left-hand side}
\def\rhs{right-hand side}
\def\multibold #1{\def\arg{#1}%
  \ifx\arg\pto \let\next\relax
  \else
  \def\next{\expandafter
    \def\csname #1#1#1\endcsname{{\bf #1}}%
    \multibold}%
  \fi \next}
\def\pto{.}
\def\multical #1{\def\arg{#1}%
  \ifx\arg\pto \let\next\relax
  \else
  \def\next{\expandafter
    \def\csname cal#1\endcsname{{\cal #1}}%
    \multical}%
  \fi \next}
\def\multimathop #1 {\def\arg{#1}%
  \ifx\arg\pto \let\next\relax
  \else
  \def\next{\expandafter
    \def\csname #1\endcsname{\mathop{\rm #1}\nolimits}%
    \multimathop}%
  \fi \next}
\def\<#1>{\mathopen\langle #1\mathclose\rangle}
\def\norma #1{\left\| #1\right\|}
\newcommand{\vertiii}[1]{{\left\vert\kern-0.25ex\left\vert\kern-0.25ex\left\vert #1 \right\vert\kern-0.25ex\right\vert\kern-0.25ex\right\vert}}
\def\opnorm #1{\vertiii{#1}}
\def\I2 #1{\int_{Q_t}|{#1}|^2}
\def\IT2 #1{\int_{Q_t^T}|{#1}|^2}
\def\IO2 #1{\norma{{#1(t)}}^2}
\def\IOT2 #1{\norma{{#1(T)}}^2}
\def\ov #1{{\overline{#1}}}
\def\iO{\int_\Omega}
\def\dt{\partial_t}
\def\dn{\partial_{\bf n}}
\def\checkmmode #1{\relax\ifmmode\hbox{#1}\else{#1}\fi}
\def\erre{{\mathbb{R}}}
\def\VV{{\bf{V}}}
\def\bv{{\boldsymbol{v}}}
\def\bw{{\boldsymbol{w}}}
\def\bstar{{\boldsymbol{*}}}
\def\HH{{\bf{H}}}
\def\WW{{\bf{W}}}
\def\LL{{\bf{L}}}
\def\CC{{\mathbb{C}}}
\def\Vp{{V^*}}
\def\VVp{{{\bf V}^*}}
\def\aeQ{\checkmmode{a.e.\ in~$Q$}}
\def\genspazio #1#2#3#4#5{#1^{#2}(#5,#4;#3)}
\def\spazio #1#2#3{\genspazio {#1}{#2}{#3}T0}
\def\spazios #1#2#3{\genspazio {#1}{#2}{#3}T \sigma}
\def\L {\spazio L}
\def\H {\spazio H}
\def\Ls {\spazios L}
\def\Lx #1{L^{#1}(\Omega)}
\def\Hx #1{H^{#1}(\Omega)}
\let\eps\varepsilon
\def\d{\delta}
\def\s{\sigma}
\def\ph{\varphi}
\def\cd{C_{\d}}
\newcommand{\0}{\boldsymbol{0}}
\newcommand{\1}{\boldsymbol{1}}
\renewcommand{\S}{\mathbf{S}}
\newcommand{\Sph}{\S_{\bph}}
\newcommand{\SP}{S_P}
\newcommand{\SR}{\S_{\br}}
\newcommand{\A}{\mathbf{A}}
\newcommand{\B}{\mathbf{B}}
\newcommand{\DD}{\mathbb{D}}
\newcommand{\SB}{\mathbb{S}}
\newcommand{\NN}{{\cal N}}
\newcommand{\RR}{{\cal R}}
\newcommand{\bRR}{{\boldsymbol{\RR}}}
\newcommand{\bNN}{{\boldsymbol{\NN}}}
\def\bj{{\boldsymbol{J}}}
\newcommand{\bph}{{\boldsymbol{\varphi}}}
\newcommand{\bphi}{{\boldsymbol{\phi}}}
\newcommand{\bmu}{{\boldsymbol{\mu}}}
\newcommand{\br}{{\bf {R}}}
\newcommand{\rr}{{\bf {r}}}
\newcommand{\bu}{\boldsymbol{u}}
\newcommand{\ds}{{\rm ds}}
\newcommand\emb {{\hookrightarrow}}
\newcommand\simcl {{\Delta_\bullet}}
\newcommand\simap {{\Delta_\circ}}
\def\wto{\rightharpoonup}
\def\wstarto{\stackrel{*}{\rightharpoonup}}
\def\Accorpa #1#2 #3 {\gdef #1{\eqref{#2}--\eqref{#3}}%
  \wlog{}\wlog{\string #1 -> #2 - #3}\wlog{}}
\begin{document}

\title{On a phase field model for RNA-Protein dynamics
	\\
}
\author{Maurizio Grasselli \footnotemark[1] \and Luca Scarpa \footnotemark[1] \and Andrea Signori \footnotemark[1] }
\date{}
\maketitle

\renewcommand{\thefootnote}{\fnsymbol{footnote}}
\footnotetext[1]{Dipartimento di Ma\-te\-ma\-ti\-ca, Politecnico di Milano, 20133 Milano, Italy 
({\tt mau\-ri\-zio.gras\-sel\-li@polimi.it, luca.scarpa@polimi.it, andrea.signori@polimi.it}).}

\centerline{{\sl to GIANNI GILARDI on the occasion of his 75th birthday}}

\medskip
\centerline{{\sl keen mathematician, inspiring teacher, friend}}

\begin{abstract}\noindent
A phase field model which describes the formation of protein-RNA complexes subject to phase segregation is analyzed.
A single protein, two RNA species, and two complexes are considered. Protein and RNA species are governed by coupled reaction-diffusion equations which also depend on the two complexes. The latter ones are driven by two Cahn--Hilliard equations with Flory--Huggins potential and reaction terms depending on the solution variables. The resulting nonlinear coupled system is equipped with no-flux boundary conditions and suitable initial conditions. The former ones entail some mass conservation constraints which are also due to the nature of the reaction terms. The existence of global weak solutions in a bounded {(two- or)} three-dimensional domain is established. In dimension two, some weighted-in-time regularity properties are shown. Moreover, making a suitable approximation of the potential, the complexes instantaneously get uniformly separated from the pure phases.
Taking advantage of this result, a unique continuation property is proven.
Among the many technical difficulties, the most significant one arises from the fact that the two complexes are initially nonexistent,
so their initial conditions are zero i.e., they start from a pure phase.
Thus we must solve, in particular, a system of two coupled Cahn--Hilliard equations with singular potential and nonlinear sources without the usual assumption on the initial datum, i.e., the initial phase cannot be pure. This novelty requires a new approach to estimate the chemical potential in a suitable $L^p(L^2)$-space with $p\in(1,2)$. This technique can be extended to other models like, for instance, the well-known Cahn--Hilliard--Oono equation.
\end{abstract}

\noindent {\bf Keywords:} multiphase Cahn--Hilliard system, Flory--Huggins potential, biomolecular RNA-Protein condensates, existence of weak solutions, regularity, separation property, unique continuation.

\vspace{2mm}

\noindent {\bf AMS (MOS) subject classification:}
		35D30, 
	    35K35, 
	    35K86, 
	    35Q92, 
        92C17, 
	    92C50. 

\vspace{2mm}

\section{Introduction}

Phase separation has recently become a paradigm in Cell Biology and, in particular, in intracellular organization (see, e.g., \cite{Dol} and
references therein, see also \cite{Al, BTP, CDLLS, HWJ, Kar, SB}). The latter can be described through a molecular mechanism based on proteins and RNA. They combine to form protein-RNA complexes which then form droplets through a phase separation process. A relatively simple but effective phase field model was recently proposed in \cite{GFGN}. The ingredients are a free protein, two RNA species, and two protein-RNA complexes (see \cite{GFGNrev} for a simpler model with just one RNA species). The nondimensional concentrations of the first three variables are denoted by $P$, $R_1$, and $R_2$, while $\varphi_1$ and $\varphi_2$ stand for the volume fractions of the complexes formed by $P$ and $R_1$, and $P$ and $R_2$, respectively. The interaction of the protein with the RNA species in a given solvent is ruled by a coupled system of reaction-diffusion equations whose diffusion coefficients depend on $\varphi_1$
and $\varphi_2$, while the reaction terms depend on all the variables. The complexes instead are governed by a system of Cahn--Hilliard equations
with singular potential and reaction terms related to the previous ones. More precisely, letting $\Omega \subset \erre^d$, $d \in \{2,3\}$, be a bounded domain with boundary $\Gamma$, $T>0$ be a fixed final time, and setting
\begin{align*}
	\bph:=(\ph_1, \ph_2),
	\quad 	
	\bmu:=(\mu_1, \mu_2),
	\quad 	
	\br:=(R_1, R_2),
\end{align*}
the resulting system reads as follows
\begin{alignat}{2}
	\label{SYS:1}
	&\dt \bph
	-\div (\CC(\bph) \nabla \bmu)
	=  \Sph(\bph, P, \br)
		&&\qquad \text{in $Q$},
		\\	\label{SYS:2}
	&\bmu = -\eps^2 \Delta \bph + A \Psi_{\bph}(\bph)
		&&\qquad \text{in $Q$},
		\\
	\label{SYS:3}
	&\dt P -\lambda_P\div (  m(\bph) \nabla P)
	=  \SP(\bph, P, \br)
		&&\qquad \text{in $Q$},\\
	\label{SYS:4}
	&\dt \br -\div ( \DD(\bph) \nabla  \br)
	=  \SR(\bph, P, \br)
		&&\qquad \text{in $Q$},
\end{alignat}
where $Q:= \Omega \times (0,T)$. Here, $\eps>0$ is related to the thickness of the diffuse interface separating the two complexes, $A>0$ is a scaling factor and $\lambda_P$ denotes the diffusion rate of free protein
(see \cite{GFGN} and references therein),
while $\CC(\cdot), \DD(\cdot)$ and $m(\cdot)$ denote the mobility tensors and the scalar mobility associated with $\bph, \br$, and $P$, respectively.
For instance, as suggested in \cite{GFGN},
$\CC(\cdot)$ and $\DD(\cdot)$ may be uniformly positive definite second-order tensors
of the form
\begin{align*}
	\CC(\rr) =
		\begin{pmatrix}
			\lambda_{R_1}m(\rr) & 0  \\
			0 & \lambda_{R_2} m(\rr)
		\end{pmatrix},
		\qquad
		\DD(\rr) =
		\begin{pmatrix}
			\lambda_{\ph_1}m(\rr) & 0  \\
			0 & \lambda_{\ph_2} m(\rr)
		\end{pmatrix},
		\qquad \rr \in \erre^2,
\end{align*}
where $m:\erre^2 \to \erre$ denotes a mobility function and $\lambda_i$, $i \in \{\ph_1,\ph_2,R_1,R_2\}$, are some fixed and positive constants. Here, if $\SB = [S_{hk}]$ is a second-order
tensor and ${\bf F}=[F_k]$ is a vector-valued function, $h,k\in\{1,2\}$  then $\SB\nabla {\bf F}= [S_{hk}\nabla F_k]$.
We recall that, in the multiphase approach, the natural space where the vector-valued order parameter $\bph$ takes its values is
the subset of $\erre^2$ of vectors with nonnegative components that, at most, add up to one (cf. \eqref{simplex}).

The reaction terms $\Sph, \SP$, and $\SR$ are defined as follows
\begin{align}	
	\label{def:sorgenti:intro}
	\Sph (\bph, P, \br)& = - \SR (\bph, P, \br)=(c_1 P R_1 - c_2 \ph_1,c_3 P R_2 - c_4 \ph_2),
	\\
	\label{def:Sp:intro}
	\SP(\bph, P, \br) & = -c_1 P R_1 + c_2 \ph_1 - c_3 P R_2 + c_4 \ph_2,
\end{align}
where $c_i$, $i=1,\dots,4$, are fixed and positive constants,
while $\Psi$ denotes a  Flory--Huggins potential. More precisely, we have $\Psi=\Psi^{(1)} + \Psi^{(2)}$, where $\Psi^{(1)}$ stands for the mixing entropy (extended by continuity, as customary)
\begin{align}
	\label{Flog}
	\Psi^{(1)}(\bph) & =
	\begin{cases}
	\sum_{i=1}^2\ph_i \ln \ph_i + S \ln S
	\quad&\text{if } \ph_1\geq0,\;\ph_2\geq0,\;\ph_1+\ph_2 { \leq1},
	\\
	+\infty \quad&\text{otherwise,}
	\end{cases}
\end{align}
where $S$ indicate the solvent which is defined as
\begin{align}
	\label{solvent}
	S & = 1- \sum_{i=1}^2 \ph_i,
\end{align}
whereas $\Psi^{(2)}$ denotes the demixing term which satisfies the following assumptions
\begin{align*}
	\Psi^{(2)}\in C^2(\erre^2),
	\qquad
	\Psi^{(2)}_\bph:\erre^2 \to \erre^2 \text{ is \Lip\ continuous}.
\end{align*}
For simplicity, we use the notation $\Psi_\bph := \nabla \Psi$ and $\Psi_{\bph\bph} := D^2 \Psi$ for the gradient and
the Hessian of $\Psi$, respectively. We recall that in the original model the authors take (see \cite[Eq.~(6)]{GFGN})
$$
{\Psi^{(2)}(\bph)}= \chi_{\ph_1 \ph_2} \ph_1\ph_2 + \chi_{\ph_1 S} \ph_1 S + \chi_{\ph_2 S}\ph_2S{,}
$$
where $\chi_{\ph_1 \ph_2},\chi_{\ph_1 S}$ and $\chi_{\ph_2 S}$ are nonnegative constants.
Moreover, system \eqref{SYS:1}--\eqref{SYS:4} is equipped with the following boundary and initial conditions
\begin{alignat}{2}
	\label{SYS:5}
	& \dn\bph =  ( \CC(\bph) \nabla  \bmu){\bf n} = ( \DD(\bph) \nabla  \br){\bf n} =
		 \0
		&&\qquad \text{on $\Sigma$},\\
	\label{SYS:6}
	& (m(\bph) \nabla P) \cdot {\bf n} = 0
		&&\qquad \text{on $\Sigma$},\\
	\label{SYS:7}
	&\bph(0)
	= \bph_0, \quad
	\br(0)
	= \br_0
	&&\qquad \text{in $\Omega$},\\
	\label{SYS:8}
	&P(0)
	= P_0
	&&\qquad \text{in $\Omega$},
	\end{alignat}
\Accorpa\SYS {SYS:1} {SYS:8}
where $\Sigma:= \Gamma \times (0,T)$, $\bf n$ denotes the outer unit normal vector to $\Gamma$,
and $\dn$ the associated normal derivative. Observe that the multi-component
Cahn--Hilliard system \eqref{SYS:1}--\eqref{SYS:2} differs from the three-component
system where also the third component undergoes phase separation
(see \cite{EG97,EL91}).

Concerning the initial data, a biologically relevant choice {(see \cite[Sec.~2]{GFGN})} is the following
\begin{align*}
	\bph(0) = \0,
	\quad
	\br(0) = \Big(\frac {1-P_0}2\Big) \1,
	\quad P(0)=P_0 \in (0,1),
\end{align*}
where $\0=(0,0)$ and $\1=(1,1)$.
The reason is that the initial scenario presents a certain amount of protein and RNA, but no complexes:
{indeed, complexes are formed exclusively} by the
interaction between protein and RNA species. As we {will } see, the initial absence of complexes is a major technical difficulty from a mathematical viewpoint.




Observe that the reaction terms \eqref{def:sorgenti:intro}--\eqref{def:Sp:intro} satisfy the identities
\begin{align}\label{sources:struct}
	\Sph + \SR =\0, \quad -\Sph \cdot \1 =  \SR \cdot \1 = \SP.
\end{align}
These  conditions play a fundamental role in the mass conservation properties of the occurring variables.
Indeed, consider the equation \eqref{SYS:1} and add it to \eqref{SYS:4}.
Integration over $\Omega\times [0,t]$, for an arbitrary $t \in [0,T]$, and multiplication by the vector $\1$, along with \eqref{SYS:5}, and the initial conditions \eqref{SYS:7}--\eqref{SYS:8}, entail
the following
\begin{align*}
	\iO  (\bph + \br)(t)\cdot\1 =
	\iO  \br(0)\cdot\1 = \iO \br_0\cdot\1 \quad \text{for all $t \in [0,T]$.}
\end{align*}
This means that the mass of the linear combination $(\bph+ \br)\cdot\1$ is preserved along the evolution.
Arguing similarly, we now multiply \eqref{SYS:1} by $\1$, \eqref{SYS:3} by $1$, integrate over $\Omega \times [0,t]$, for $t \in [0,T]$, and add the resulting equalities. This gives
\begin{align*}
	\iO  (\bph\cdot\1 + P)(t) =
	\iO   P(0) = \iO P_0 \quad \text{for all $t \in [0,T].$}
\end{align*}
Thus the total mass of $\bph\cdot\1+ P$ is also conserved. On the other hand, repeating the same argument with \eqref{SYS:3} tested by $1$ and \eqref{SYS:4} tested by $-\1$, we deduce that
\begin{align*}
	\iO  ( P - \br\cdot\1)(t) =\iO  (P -\br\cdot\1 )(0) = \iO (P_0 -\br_0\cdot\1) \quad \text{for all $t \in [0,T]$}.
\end{align*}
Thus, we set
\[
  {\hat \ph_\Omega}:=\frac1{|\Omega|}\int_\Omega\bph\cdot\1, \qquad
 {\hat  R_\Omega}:=\frac1{|\Omega|}\int_\Omega\br\cdot\1, \qquad
  \hat P_\Omega:=\frac1{|\Omega|}\int_\Omega P{,}
\]
where $|\Omega|$ indicates the Lebsesgue measure of $\Omega$. Upon
combining the properties {above}, we derive that the global mass balance for the system \SYS\ is ruled by
\begin{align*}
	\dt (2 {\hat \ph_\Omega} + {\hat R_\Omega} + \hat P_\Omega) = 0
	\quad \text{ in $[0,T]$, }
\end{align*}
{while the partial mass balances read}
\begin{align*}
	\dt ({\hat \ph_\Omega} + {\hat R_\Omega}) & = 0,
	\quad
	\dt ({\hat \ph_\Omega} + \hat P_\Omega)  = 0,
	\quad
	\dt ({\hat R_\Omega} -\hat P_\Omega)=0
	\quad \text{ in $[0,T]$. }
\end{align*}


Without loss of generality we can take $\eps =A= \lambda_P = 1$ (see \eqref{SYS:2} and \eqref{SYS:3}). Concerning mobility, for the sake of simplicity, we assume the mobility tensors are given by the identity tensor and we take constant scalar mobility. More precisely, we assume hereafter that $\DD\equiv \CC\equiv \mathbb{I}$ and $m \equiv 1$.

Cahn--Hilliard equations with singular potential and reaction terms are usually difficult to handle. This can be understood even by looking at one of the simplest, albeit significant, cases, namely, the Cahn--Hilliard--Oono equation (see \cite{GGM}).
On the other hand, reaction terms arise quite naturally, for instance, in solid tumor growth modeling
{\cite{CL, GLSS}}. In this context, when dealing with biological materials reaction terms depend, for instance, on nutrients whose evolution is governed by suitable reaction-diffusion equations. Therefore, these scenarios are quite similar to the present one. In all these models,
the standard assumption on the initial datum is that the total mass must belong to $(0,1)$, that is, one cannot start from a pure phase. This is trivial in the case of no reaction terms, since
in {such a circumstance} no phase separation can take place.
Nevertheless, the reaction may activate phase separation as in the present case or, also, in the case of tumor growth. Thus it makes sense that the phase (i.e., the complex) is initially zero.
{From a mathematical viewpoint,} in presence of no-flux boundary conditions, the usual approach does not work. Indeed, an essential ingredient is to recover an $L^2$-bound for the chemical potential. This bound cannot be recovered from the $L^2$-bound of its gradient because of the boundary conditions. Hence, one is forced to estimate the spatial average of the chemical potential and, in particular, of each component of $\Psi_{\bph}(\bph)$. This is the point where the assumption about the total mass of each complex must belong to $(0,1)$ comes into play. Here we show how this assumption can be avoided and prove nonetheless the existence of global weak solutions. This is the main result of the paper and its proof is carried out in Section \ref{sec:ex_weak}: {one of the crucial mathematical tools is the refined
Property~\ref{MZ} of the mixing entropy $\Psi^{(1)}$ contained in Proposition~\ref{prop:log},
which is carefully proved in the Appendix~\ref{app}.}
Then, in Section~\ref{SEC:PAR3d}, we establish some regularity results. Moreover, in dimension two, making a suitable approximation of $\Psi^{(1)}$, we show a weighted-in-time regularity and prove that each complex is always uniformly separated from pure phases after an arbitrarily short time, i.e., the strict separation property holds (see, \cite{GGM,MZ,GGG}). In other words, no complex fully prevails or disappears.
This property is used in Section~\ref{SEC:UQ} to show unique continuation for weak solutions. This means that if two different patterns originating from the same initial data are equal at a certain time, then they coincide from that time onward.

All the described results will be stated in the next section. Section \ref{SEC:CONC} is devoted to some remarks on further possible applications of the technique developed here, {to Cahn--Hilliard-type equations with reaction terms}
starting from a pure initial phase. We analyze, in particular, the Cahn--Hilliard--Oono equation
(see Section~\ref{SEC:CHO}) and some tumor growth models (see Section~\ref{SEC:TUMOR}).

Let us conclude this Introduction by pointing out that uniqueness in three dimensions remains an open issue for the moment: {indeed, } we are not able to avoid the use of the strict separation property whose validity is unknown in dimension three (see Remark~\ref{REM:UQ} below) and, also in dimension two, its validity requires a further modification of $\Psi^{(1)}$. Another, possibly challenging, open issue is the extension of our results to the case of nonconstant, albeit nondegenerate, mobilities (see, for instance, \cite[Eq.~(9)]{GFGN}).

\section{Notation and main results}

\subsection{Notation} \label{SEC:NOT}

For a given (real) Banach space $X$, we denote its associated norm by $\norma{\,\cdot\,}_X$, its topological dual space by $X^*$, and the associated duality pairing
by $\<\cdot,\cdot>_X$. If $X$ is, in particular, a Hilbert space, we denote its inner product by $(\cdot, \cdot)_X$.
For any $1 \leq p \leq \infty$ and $k \geq 0$, the standard Lebesgue and Sobolev spaces defined on $\Omega$ are denoted as $L^p(\Omega)$ and $W^{k,p}(\Omega)$, and the corresponding norms are indicated with $\norma{\,\cdot\,}_{L^p(\Omega)}$ and $\norma{\,\cdot\,}_{W^{k,p}(\Omega)}$, respectively.
In the special case $p = 2$, these turn into Hilbert spaces and we use the standard notation $H^k(\Omega) := W^{k,2}(\Omega)$ for any $k \geq 1$.
To denote the corresponding spaces of $\erre^2$-vector- or $\erre^{2\times2}$-matrix-valued functions we employ the notation $\LL^p(\Omega)$, $\WW^{k,p}(\Omega)$, and $\HH^{k}(\Omega)$.
Bold letters are also used to denote vector- or matrix-valued elements.

For given matrices ${\A,\B}\in \erre^{2\times 2 }$, we define the scalar product
\begin{align*}
	{\A : \B} \,:= \sum_{i=1}^2 \sum_{j=1}^2 [{\A}]_{ij}\, [{\B}]_{ij}\;,
\end{align*}
and the operator norm
\begin{align*}
	\opnorm{\cdot} : \erre^{2 \times 2} \to [0, + \infty), \qquad \opnorm{{\A}} := \sup_{\rr \in \erre^2:\,\, |\rr|=1} |{\A}\rr| = \sup_{\rr\in \erre^2\setminus \{\0\}} \frac {|{\A}\rr|}{|\rr|}\mau{,}
\end{align*}
where $|\cdot |$ stands for the Euclidean norm.

We will also make use of the following notation
\begin{align*}
  & H := \Lx2, \quad
  V := \Hx1,
  \quad
  \HH : = \LL^2(\Omega),
  \quad
  \VV: = \HH^1(\Omega),
\end{align*}
and for both $H$ and $\HH$ we indicate by $(\cdot,\cdot)$ and $\norma{\,\cdot\,}$ their inner product and norm, respectively.
Besides, we need to introduce the following spaces
\begin{align*}
	W := \{ f \in \Hx2 : \dn f = 0 \quad \text{a.e.~on $\Gamma$} \},
	\quad
	\WW := \{ {\bf f} \in \HH^2(\Omega) : \dn {\bf f} = \0 \quad \text{a.e.~on $\Gamma$} \}.
\end{align*}

For elements $v\in V^*$ and $ \bv \in \VVp$ we set
\begin{align*}
	v_\Omega:=\frac1{|\Omega|}\<{v},{1}>_V,
	\qquad
	\bv_\Omega:=\frac1{|\Omega|}
	(\<{\bv},{(1,0)}>_\VV, \<{\bv},{(0,1)}>_\VV),
	\qquad  v \in V,\; \bv \in \VV,
\end{align*}
and we recall the Poincar\'e--Wirtinger inequalities
\begin{alignat}{2}
  \label{poincare}
  \norma{v}^2_V &\leq c_{\Omega} \bigl( \norma{\nabla v}_H^2 + |v_\Omega|^2 \bigr),
  \qquad&&v\in V,\\
  \norma{\bv}^2_\VV &\leq c_{\Omega} \bigl( \norma{\nabla \bv}_\HH^2 + |\bv_\Omega|^2 \bigr),
  \qquad&&\bv\in \VV,
  \label{poincare2}
\end{alignat}
where the constant $c_\Omega>0$ depends only on $\Omega$ and on the space dimension $d$.
Moreover, we set
\begin{alignat*}{2}
	V_0 &:= \{ v \in V : v_\Omega =0\}, \qquad V_0^*&&:= \{ v^* \in V^* : v^*_\Omega =0\},\\
	\VV_0 &:= \{ \bv \in \VV : \bv_\Omega = \0\}, \qquad
	\VV_0^*&&:= \{ \bv^* \in \VV^* : \bv^*_\Omega =\0\},
\end{alignat*}
and define the following linear bounded operators $\RR : V \to \Vp$ and $\bRR : \VV \to \VV^*$
\begin{alignat*}{2}
\<{\RR  u},{v}>_V &:= \int_\Omega \nabla u \cdot \nabla v  , &&\quad u,v \in V,\\
\<{\bRR  \bu},{\bv}>_\VV &:= \int_\Omega \nabla \bu : \nabla \bv  , &&\quad \bu,\bv \in \VV.
\end{alignat*}
It is well-known that the restrictions $\RR \vert_{V_0}$ and
$\bRR \vert_{\VV_0}$
are isomorphisms from $V_0$ to $V_0^*$
and from $\VV_0$ to $\VV_0^*$, respectively,
with well-defined  inverses
$\RR^{-1}=:\NN : V_0^* \to V_0$ and
$\bRR^{-1}=:\bNN : \VV_0^* \to \VV_0$.
Furthermore, it holds that
\begin{alignat*}{2}
	\<{\RR  u},{\NN v^*}>_V &= \<{v^*},{u}>_V,
	\qquad && u \in V, \; v^* \in V_0^*,\\
	\<{\bRR  \bu},{\bNN \bv^*}>_\VV &= \<{\bv^*},{\bu}>_\VV,
	\qquad && \bu \in V, \; \bv^* \in \VV_0^*,
\end{alignat*}
and
\begin{alignat*}{2}
	\<{v^*},{\NN w^*}>_V &= \iO  \nabla (\NN v^*) \cdot \nabla (\NN w^*),
	\qquad  && v^*,w^* \in V_0,\\
	\<{\bv^*},{\bNN \bw^*}>_\VV &= \iO  \nabla (\bNN \bv^*) : \nabla (\bNN \bw^*),
	\qquad  && \bv^*,\bw^* \in \VV_0.
\end{alignat*}
Again, it is well-known that
\begin{alignat*}{2}
	\norma{v^*}_* &:= \norma{\nabla (\NN v^*)} =
	({\nabla (\NN v^*)},{\nabla (\NN v^*)}) ^{1/2}=\<v^*, \NN v^*>_V^{1/2},
	\qquad &&v^*\in V_0^*,\\
	\norma{\bv^*}_\bstar &:= \norma{\nabla (\bNN \bv^*)} =
	({\nabla (\bNN \bv^*)},{\nabla (\bNN \bv^*)}) ^{1/2}=\<\bv^*, \bNN \bv^*>_\VV^{1/2},
	\qquad &&\bv^*\in \VV_0^*,
\end{alignat*}
are norms in $V_0^*$ and $\VV_0^*$, respectively,
with associated inner products denoted by
$(\cdot, \cdot )_{*}:= (\nabla (\NN \cdot),\nabla (\NN \cdot))$, and
$(\cdot, \cdot )_{\bstar}:= (\nabla (\bNN \cdot),\nabla (\bNN \cdot))$.
Similarly, setting
\begin{alignat*}{2}
	\norma{v^*}_{-1} &:= \norma{v^*-(v^*)_\Omega}_* +|(v^*)_\Omega|,
	\qquad&&v^*\in V^*,\\
	\norma{\bv^*}_{-\1} &:= \norma{\bv^*-(\bv^*)_\Omega}_\bstar +|(\bv^*)_\Omega|,
	\qquad&&\bv^*\in \VV^*,
\end{alignat*}
we get equivalent norms in $V^*$ and $\VV^*$, respectively.

We identify $H$ and $\HH$ with their duals through
their natural Riesz isomorphisms, so that $V\emb H\emb V^*$
and $\VV\emb\HH\emb\VV^*$ continuously, compactly, and densely.
In particular, we have the identifications
\begin{alignat*}{2}
\<{u},{v}>_V &= \int_\Omega u v, \quad &&u\in H,\;v\in V, \\
\<{\bu},{\bv}>_\VV &= \int_\Omega \bu \cdot\bv, \quad &&\bu\in \HH,\;\bv\in \VV,
\end{alignat*}
which imply the following interpolation inequalities:
\begin{align}
\label{interpol_one}
\norma{v} &= \<{v},{v}>_V ^{1/2}= \<{\RR  v},{\NN v}>_V^{1/2} \leq \norma{v}_{V}^{1/2} \norma{\NN  v}_{V}^{1/2} \leq \norma{v}_{V}^{1/2} \norma{v}_*^{1/2},\\
\label{interpol_two}
\norma{\bv} &= \<{\bv},{\bv}>_\VV ^{1/2}= \<{\bRR  \bv},{\bNN \bv}>_\VV^{1/2}
\leq \norma{\bv}_{\VV}^{1/2} \norma{\bNN  \bv}_{\VV}^{1/2}
\leq \norma{\bv}_{\VV}^{1/2} \norma{\bv}_\bstar^{1/2}.
\end{align}
Furthermore, we recall the identities
\begin{alignat*}{2}
\<{\dt v^*(t)},{\NN v^*(t)}>_V &= \frac 12 \frac d {dt} \norma{v^*(t)}^2_{*}
\quad \text{ for a.e.~}t \in (0,T), \quad&\forall\,v^* \in \H1 {V^*_0},\\
\<{\dt \bv^*(t)},{\bNN \bv^*(t)}>_\VV &= \frac 12 \frac d {dt} \norma{\bv^*(t)}^2_{\bstar}
\quad \text{ for a.e.~}t \in (0,T), \quad&\forall\,\bv^* \in \H1 {\VV^*_0},
\end{alignat*}
and the inequalities
\begin{align}\label{oss:mean}
	| v_{\Omega}|\leq c \norma{v}_{\Vp}
	\quad\forall\, v\in V^* \qquad\text{and}\qquad
	| \bv_{\Omega}|\leq c \norma{\bv}_{\VV^*}
	\quad\forall\, \bv\in \VV^*.
\end{align}
Finally, we will also use the following inequalities which
readily follows from Ehrling's lemma and the compactness of the inclusions
$V\emb H$ and $\VV\emb\HH$:
\begin{alignat}{2}	
	\label{comp:ineq1}
	\forall\,\delta>0,\quad\exists\,\cd>0:\quad
	&\norma{v} \leq \d  \norma{\nabla v} + \cd \norma{ v}_{-1} \qquad&&\forall\,v \in V,\\
	\label{comp:ineq2}
	\forall\,\delta>0,\quad\exists\,\cd>0:\quad
	&\norma{\bv} \leq \d  \norma{\nabla \bv} + \cd \norma{ \bv}_{-\1} \qquad&&\forall\,\bv \in\VV.
\end{alignat}

\subsection{Main result and some regularity properties}

In order to state our main result, we need to introduce some further notation as well as some structural assumptions on the nonlinear terms.

The simplex of admissible configurations for the complexes is defined by
\begin{align}\label{simplex}
	\simap := \{ \rr=(r_1,r_2) \in \erre^2 : \min\{r_1,r_2\} > 0, r_1 +r_2 < 1\},  \quad 	\simcl  := \overline{\simap}.
\end{align}

\begin{enumerate}[label={\bf A\arabic{*}}, ref={\bf A\arabic{*}}]
\item \label{ass:pot}
We suppose that the configuration potential can be written as $\Psi=\Psi^{(1)}+ \Psi^{(2)} $, where
\begin{align*}
  & \Psi^{(1)} \in C^0(\simcl )\cap C^2(\simap),\qquad \Psi^{(2)}\in {C^2(\erre^2)};\\
  & \Psi^{(1)}:\simcl \to [0,+\infty] \quad\text{is proper, convex and satisfies Properties~\ref{MZ0}--\ref{MZ} listed below};\\
  &\Psi_{\bph}^{(2)}:{\erre^2}\to\erre^2 \quad\text{is $L_0$-Lipschitz continuous for some $L_0>0$.}
\end{align*}

\begin{property}\label{MZ0}
  For every compact subset $K\subset \simap$,
  there exist constants $c_K, C_K>0$ such that,
  for every $\rr\in\simap$ and for every $\rr_0\in K$ it holds that
  \begin{equation}\label{MZ_base}
  c_K|\Psi^{(1)}_\bph(\rr)|
  \leq \Psi^{(1)}_\bph(\rr)\cdot(\rr-\rr_0)
  +C_K.
  \end{equation}
\end{property}
\begin{property}\label{MZ}
  There exist constants $c_\Psi, C_\Psi>0$, $R\in(0,1/2)$, $q\in(2,+\infty)$,
  and a decreasing positive function $F\in L^q(0,R)\cap C^0(0,R)$ such that, for every measurable
  $\bphi=(\phi^1,\phi^2):\Omega\to\simap$ satisfying
  \[
  0<\min\{\phi^1_\Omega, \phi^2_\Omega\}<\phi^1_\Omega + \phi^2_\Omega \leq R,
  \]
  it holds that
  \begin{equation}\label{MZ_gen}
  \begin{split}
  &c_\Psi\min\{\phi^1_\Omega, \phi^2_\Omega\}\int_\Omega|\Psi^{(1)}_\bph(\bphi)|\\
  &\quad \leq \int_\Omega\Psi^{(1)}_\bph(\bphi)\cdot(\bphi-\bphi_\Omega)
  +C_\Psi(\phi^1_\Omega + \phi^2_\Omega)
  \left[1 + F(\min\{\phi^1_\Omega, \phi^2_\Omega\})\right].
  \end{split}
  \end{equation}
\end{property}
Let us point out straightaway that Property~\ref{MZ0}
is a well-known inequality in the literature on Cahn--Hilliard equations
{(see e.g.~\cite[Sec.~5]{GMS09} and \cite{MZ}).}
Property~\ref{MZ} is a sharp generalization of it, in which
the compact $K$ shrinks to the origin, and where the constants
are explicitly estimated: {due to the vanishing initial datum for the phase variables,
this will be crucial in order to handle
the spatial mean of the chemical potential close to the initial time.}
It is well-known that the physically relevant potential \eqref{Flog} satisfies Property~\ref{MZ0}: see, for instance, \cite[Lem.~2.2]{FLRS} and references therein (see also \cite[(3.36)]{FG} for the corresponding scalar case).
Actually, we prove that it also satisfies the more general condition Property~\ref{MZ} which helps us to describe the time integrability properties of weak solutions for small times (see Proposition~\ref{prop:log} below).

\item \label{ass:sources}
The reaction terms
\begin{align*}
	\Sph, \SR:\erre^2 \times \erre \times \erre^2 \to \erre^2,
	\qquad
	\SP:\erre^2 \times \erre \times \erre^2 \to \erre,
\end{align*}
are defined for $(\bphi,p,\rr)\in\erre^2\times\erre\times\erre^2$ as
\begin{align}	
	\label{def:sorgenti}
	\Sph (\bphi, p, \rr)& = - \SR (\bphi, p, \rr)=(c_1 p r_1 - c_2 \phi_1,c_3 p r_2 - c_4 \phi_2),
	\\
	\label{def:Sp}
	\SP(\bphi, p, \rr) & = -c_1 p r_1 + c_2 \phi_1 - c_3 p r_2 + c_4 \phi_2,
\end{align}
where $c_i$, $i=1,\dots,4$, are fixed and positive constants satisfying
\begin{align}\label{c}
	\min\{c_1,c_3\}>c_2+c_4.
\end{align}

\end{enumerate}

The following result will be proven in Appendix~\ref{app}.
\begin{proposition}\label{prop:log}
The Flory--Huggins potential defined in \eqref{Flog} satisfies Property~\ref{MZ} with the choices
$R=1/8$, $F(r):=| \ln (r/2) \,|$, $r\in(0,R)$, and for any $q\in(2,+\infty)$.
\end{proposition}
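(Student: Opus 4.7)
My plan is to verify \eqref{MZ_gen} for the Flory--Huggins mixing entropy by exploiting the explicit form of its gradient
\(
\Psi^{(1)}_\bph(\bphi) = \bigl(\ln(\phi^1/S),\,\ln(\phi^2/S)\bigr)
\)
with $S := 1-\phi^1-\phi^2$, together with a careful pointwise case analysis on $\Omega$. Setting $m := \min\{\phi^1_\Omega,\phi^2_\Omega\}$ and $M := \phi^1_\Omega+\phi^2_\Omega$, the assumption $m < M \leq R = 1/8$ gives $m \leq M/2 \leq 1/16$, and without loss of generality I would take $\phi^1_\Omega = m$. I split $\Omega$ into the ``mid-range'' set
\[
E_{\rm mid} := \{\phi^1 > m/2,\ \phi^2 > m/2,\ S > 1/4\}
\]
and its complement. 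On $E_{\rm mid}$, one checks pointwise that $|\Psi^{(1)}_\bph(\bphi)| \leq 2|\ln(m/2)| + \ln 4 \leq C(1+F(m))$; after multiplication by $m$ and integration this contribution is dominated by $C|\Omega|\,M(1+F(m))$ and can be absorbed into the right-hand side of \eqref{MZ_gen} by choosing $C_\Psi \geq C|\Omega|$.

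On the complement I would distinguish the three subsets $\{\phi^i \leq m/2\}$ ($i=1,2$) and $\{S \leq 1/4\}$. In the favourable subcase $\{\phi^i \leq m/2,\ \phi^i < S\}$ the sign $\Psi^{(1)}_{\ph_i}(\bphi)<0$ combined with $\phi^i - \phi^i_\Omega \leq -m/2$ yields directly
\(
\Psi^{(1)}_{\ph_i}(\bphi)(\phi^i-\phi^i_\Omega)\geq (m/2)\,|\Psi^{(1)}_{\ph_i}(\bphi)|.
\)
On $\{S \leq 1/4\}$ at least one $\phi^j \geq 3/8$, so $\phi^j - \phi^j_\Omega \geq 1/4$ and $\Psi^{(1)}_{\ph_j}(\bphi) \geq \ln(3/2) > 0$; the same type of pointwise bound holds with $1/4$ in place of $m/2$, and the remaining component is then absorbed using $|\ln(\phi^1/\phi^2)|$ bounds (see below).

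The delicate case is the ``corner'' $\{\phi^1 \leq m/2,\ \phi^1 \geq S\}$ (and its symmetric counterpart): here the sign of $\Psi^{(1)}_{\ph_1}(\bphi)$ is unfavourable, but both $\phi^1$ and $S$ are at most $m/2\leq 1/16$, so necessarily $\phi^2 \geq 7/8$ and $\phi^2 - \phi^2_\Omega \geq 3/4$. The algebraic identity
\[
\Psi^{(1)}_{\ph_1}(\bphi) - \Psi^{(1)}_{\ph_2}(\bphi) = \ln(\phi^1/\phi^2)\leq 0,
\]
together with $\Psi^{(1)}_{\ph_1}(\bphi) \geq 0$, gives $0 \leq \Psi^{(1)}_{\ph_1}(\bphi) \leq \Psi^{(1)}_{\ph_2}(\bphi)$, transferring the control of the small component to the large one via $\Psi^{(1)}_{\ph_2}(\bphi)(\phi^2 - \phi^2_\Omega)\geq (3/4)|\Psi^{(1)}_{\ph_2}(\bphi)|$. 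The unfavourable term $\Psi^{(1)}_{\ph_1}(\bphi)(\phi^1 - \phi^1_\Omega)$ is bounded in absolute value by $m\,|\Psi^{(1)}_{\ph_2}(\bphi)| \leq |\Psi^{(1)}_{\ph_2}(\bphi)|/16$, and is absorbed as well.

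Adding the contributions from all three subsets and choosing $c_\Psi$ small enough (to dominate the numerical constants produced by the corner analysis) and $C_\Psi$ large enough (to absorb the mid-range term $\leq C|\Omega|\,M(1+F(m))$) yields \eqref{MZ_gen} with $F(r) := |\ln(r/2)|$, which lies in $L^q(0,R)\cap C^0(0,R)$ for every $q\in(2,+\infty)$. The principal technical obstacle is precisely the corner region above: it may have measure as small as $O(M)$ but $|\Psi^{(1)}_\bph(\bphi)|$ is unbounded on it, so measure-theoretic arguments do not suffice; the algebraic symmetry $\Psi^{(1)}_{\ph_1} - \Psi^{(1)}_{\ph_2} = \ln(\phi^1/\phi^2)$, specific to the logarithmic form of the mixing entropy, is what closes the estimate.
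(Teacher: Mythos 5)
Your strategy---partition $\Delta_\circ$ according to the sign and size of the components of $\Psi^{(1)}_\bph$, exploit the favourable sign where $\phi^i$ is small, and handle separately the ``corner'' where one component is close to $1$---is the same high-level idea as the paper's Appendix~\ref{app}, but with two genuine differences. First, the paper organizes the partition componentwise (for each $i$, into $A_i$, $B_i$, $C_i$, plus $D$) and builds the condition $\phi^i\leq S$ directly into $A_i\cup B_i$, so that $D_i\Psi^{(1)}\leq 0$ there with no case analysis; your ``mid-range'' block $E_{\rm mid}$ is a different slicing and does \emph{not} have a uniform sign of $D_i\Psi^{(1)}$. Second, your corner argument is a real simplification: you control $D_1\Psi^{(1)}$ on $\{\phi^1\leq m/2,\ \phi^1\geq S\}$ via the exact identity $D_1\Psi^{(1)}-D_2\Psi^{(1)}=\ln(\phi^1/\phi^2)\leq 0$, transferring the unbounded small-component term to the large component, which has the favourable sign $D_2\Psi^{(1)}(\phi^2-\phi^2_\Omega)\geq\tfrac34\,|D_2\Psi^{(1)}|$. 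The paper controls the same region $D_1$ through the Fenchel--Young inequality plus the boundedness of $\Psi^{(1)}$ on $\simcl$; your algebraic route is more elementary and uses the logarithmic structure directly.

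However, your plan as stated has a gap on $E_{\rm mid}$. You bound $m\int_{E_{\rm mid}}|\Psi^{(1)}_\bph|\leq C|\Omega|\,M(1+F(m))$ using the pointwise bound $|\Psi^{(1)}_\bph|\leq C(1+F(m))$, but you never control the companion term $\int_{E_{\rm mid}}\Psi^{(1)}_\bph\cdot(\bphi-\bphi_\Omega)$, which also enters \eqref{MZ_gen} and is \emph{not} sign-definite there (on $E_{\rm mid}$ with $m<\phi^i<S$ the product $D_i\Psi^{(1)}(\phi^i-\phi^i_\Omega)$ is strictly negative). Your pointwise bound only gives $|\Psi^{(1)}_\bph\cdot(\bphi-\bphi_\Omega)|\leq C$, and the resulting $-\int_{E_{\rm mid}}\Psi^{(1)}_\bph\cdot(\bphi-\bphi_\Omega)\leq C|\Omega|$ cannot be absorbed by $C_\Psi\,M(1+F(m))$ when $M$ is small. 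This is a genuine missing step, not merely an omitted computation: without it, the sum over the partition does not close. The remedy is the device the paper employs on $B_i$: in the unfavourable subcase of $E_{\rm mid}$ one has $|D_i\Psi^{(1)}|=\ln(S/\phi^i)\leq -\ln\phi^i\leq F(m)$ since $\phi^i>m/2$ and $S<1$, and $|\phi^i-\phi^i_\Omega|\leq\phi^i$, so that $|D_i\Psi^{(1)}(\phi^i-\phi^i_\Omega)|\leq F(m)\,\phi^i$, which integrates to $F(m)\,|\Omega|\,\phi^i_\Omega\leq F(m)\,|\Omega|\,M$. With this added estimate (and the same care about the ``other'' component on the overlapping sets $\{\phi^i\leq m/2\}$ and $\{S\leq 1/4\}$, which you defer to ``$|\ln(\phi^1/\phi^2)|$ bounds''), your decomposition does yield Property~\ref{MZ} with $R=1/8$ and $F(r)=|\ln(r/2)|$.
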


It is worth noting that assumption \ref{ass:pot} entails that $\Psi$ is a quadratically perturbed convex function,
so $\Psi$ is generally not purely convex, namely, the demixing part is accounted for.

We can now state our main result.

\begin{theorem}[Existence of global weak solutions]
\label{THM:EX:WEAK}
Let \ref{ass:pot}--\ref{ass:sources} hold.
Moreover, suppose that
\begin{align}
	\label{ass:exweak:initialdata}
	&\bph_0:=\0\in\VV, \qquad P_0 \in H, \qquad \br_0 = (R_{0}^{1},R_{0}^{2}):=\Big(\frac{1-P_0}2\Big)\1 ,\\
	\label{P:minmax:ini}
	&0 \leq P_0 (x)\leq 1
	\quad\textrm{for a.a.~$x\in \Omega$},\\
    \label{max:ini:Linf}
	&\Big \| P_0-\frac 12 \Big\|_{L^\infty(\Omega)} \leq
	\frac 12 \Big(1 - \frac {c_2 + c_4}{\min\{c_1,c_3\}}\Big).
\end{align}
Then, there exists $(\bph, \bmu, P, \br)$ such that
\begin{align*}
	& \bph \in \H1 {\VVp} \cap \L\infty {\VV} \cap \L2 {\WW},\\
	&\bph \in \Ls4 {\WW} \cap \Ls2 {\WW^{2,r}(\Omega)} \quad\forall\,\sigma\in(0,T),\\
	&\bph \in L^{2p}(0,T; \WW) \cap L^{p}(0,T;\WW^{2,r}(\Omega))\quad\forall\,p\in(1,2),\\
	& \bph \in  \LL^\infty(Q): \quad
	\bph(x,t) \in \simap \quad \textrm{for a.a.~$(x,t) \in Q$},\\
	& \bmu \in  L^p(0,T;\VV)\cap L^2(\sigma,T;\VV) \quad\forall\,p\in(1,2)\quad\forall\,\sigma\in(0,T),
	\quad \nabla\bmu \in L^2(0,T;\HH),\\
	& P  \in	\H1 {\Vp} \cap \L2 {V}\cap L^\infty(Q),
	\\
	& \br  \in \H1 {\VVp} \cap \L2 {\VV}\cap \LL^\infty(Q),
	\\
	&\exists\,c_*>0:\quad c_* < P(x,t) \leq1,  \quad c_* < R_i(x,t) \leq1 \quad
	\textrm{for a.a.~$(x,t) \in Q$}, \quad i=1,2,
\end{align*}
with $c_*\in \big(0,\frac{c_2+c_4}{8\min\{c_1,c_3\}}\big]$,
where $r=6$ if $d=3$ and $r$ is arbitrary in $(1,+\infty)$ if $d=2$,
which is a weak solution to problem \eqref{SYS:1}--\eqref{SYS:4}, \eqref{SYS:5}--\eqref{SYS:8} in the following sense{:}
\begin{align}
	\label{wf:mu}
	\bmu = -\Delta \bph + \Psi_{\bph}(\bph) \qquad \text{\aeQ},
\end{align}
the variational identities
\begin{align}
	\label{wf:1}
	& \<\dt \bph ,\bv>_{\VV}
	+ \iO \nabla \bmu : \nabla \bv
	=
	\iO \Sph (\bph, P, \br)\cdot \bv,
	\\ \label{wf:P}
	& \<\dt P,v> _V +\iO  \nabla  P \cdot \nabla v = \iO \SP(\bph, P, \br) v,
	\\
	\label{wf:R}
	& \<\dt \br,\bv>_{\VV}  +\iO \nabla \br : \nabla \bv= \iO \SR(\bph, P, \br)\cdot \bv ,
\end{align}
are satisfied for every test function
$ \bv\in \VV$, $v\in V$, almost everywhere in $(0,T)$, and the following initial conditions hold
\begin{align*}
	\bph(0)=\0,
	\quad
	P(0)=P_0,
	\quad
	\br(0)=\Big(\frac {1-P_0}2\Big) \1
	\qquad \text{a.e.~in $\Omega.$}
\end{align*}
\end{theorem}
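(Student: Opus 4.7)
My plan is a double-parameter regularization scheme. I would approximate the singular convex part $\Psi^{(1)}$ by its Moreau--Yosida regularization $\Psi^{(1)}_\lambda$, which is Lipschitz on $\erre^2$, and perturb the initial datum to some $\bph_0^\delta\in\WW$ with small positive average in $\simap$ converging to $\0$ as $\delta\to 0$. For each $(\lambda,\delta)$, existence of approximate solutions $(\bph^{\lambda,\delta}, \bmu^{\lambda,\delta}, P^{\lambda,\delta}, \br^{\lambda,\delta})$ on $[0,T]$ follows from a Faedo--Galerkin scheme, possibly preceded by a cutoff of the reaction terms removed a posteriori via the $L^\infty$-bounds below. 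Uniform-in-$(\lambda,\delta)$ a priori estimates then come from the Cahn--Hilliard energy estimate (testing \eqref{wf:1} by $\bmu$ and combining with the regularized \eqref{wf:mu}) together with coupled testings of the $P$- and $\br$-equations and Gronwall's lemma, giving $\bph\in L^\infty(0,T;\VV)$, $\nabla\bmu\in L^2(0,T;\HH)$, $P\in H^1(0,T;V^*)\cap L^2(0,T;V)$, and $\br\in H^1(0,T;\VV^*)\cap L^2(0,T;\VV)$. The pointwise bounds $P,R_i\in(c_*,1]$ with $c_*\in\bigl(0,(c_2+c_4)/(8\min\{c_1,c_3\})\bigr]$ come from a Stampacchia-type invariant-region argument that exploits the cancellation \eqref{sources:struct}, the structural condition \eqref{c}, and the $L^\infty$-smallness \eqref{max:ini:Linf} of $P_0$ to force the reaction terms to have the right sign at the threshold levels.

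The genuinely new step, and the main obstacle, is the estimate of the spatial mean $\bmu_\Omega$. Since $\bph(0)=\0$ lies on $\partial\simap$, the usual bound based on Property~\ref{MZ0} (which requires $\bph_\Omega$ to stay in a compact subset of $\simap$) fails at small times. I would first integrate the $\bph$-equation in space to derive the coupled ODEs
\[
\dt\varphi^1_\Omega = \frac{1}{|\Omega|}\int_\Omega(c_1 P R_1 - c_2\varphi_1), \qquad
\dt\varphi^2_\Omega = \frac{1}{|\Omega|}\int_\Omega(c_3 P R_2 - c_4\varphi_2),
\]
whose right-hand sides at $t=0$ are strictly positive thanks to the uniform lower bounds $P,R_i\geq c_*$; this yields $\min\{\varphi^1_\Omega(t),\varphi^2_\Omega(t)\}\geq\kappa t$ for small $t$, uniformly in $(\lambda,\delta)$. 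Applying Property~\ref{MZ}---with $F(r)=|\ln(r/2)|$ from Proposition~\ref{prop:log}---pointwise in time, and combining with $|\bmu_\Omega|\leq\frac{1}{|\Omega|}\int_\Omega|\Psi_\bph(\bph)|$ and the bound on $\int_\Omega\Psi^{(1)}_\bph(\bph)\cdot(\bph-\bph_\Omega)$ obtained by testing the identity $\bmu=-\Delta\bph+\Psi_\bph(\bph)$ by $\bph-\bph_\Omega$, leads to a pointwise estimate of the schematic form
\[
|\bmu_\Omega(t)|\leq \frac{C\bigl(1 + \|\nabla\bmu(t)\|\,\|\nabla\bph(t)\|\bigr)}{\min\{\varphi^1_\Omega(t),\varphi^2_\Omega(t)\}} + C\bigl(1 + |\ln(\min\{\varphi^1_\Omega(t),\varphi^2_\Omega(t)\}/2)|\bigr).
\]
A Hölder--Hardy interpolation that combines the $L^2$-bound on $\nabla\bmu$, the linear-in-time lower bound on $\min\{\varphi^1_\Omega,\varphi^2_\Omega\}$, and the $L^q$-integrability of the logarithmic $F$ for every finite $q$, then yields uniform bounds of $\bmu_\Omega$ in $L^p(0,T)$ for every $p\in(1,2)$, and hence of $\bmu$ in $L^p(0,T;\VV)$ through \eqref{poincare2}.

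Passing to the limit $\lambda,\delta\to 0$ follows standard compactness arguments: Aubin--Lions yields strong $L^2(Q)$-convergence of $\bph,P,\br$, while $\bmu^{\lambda,\delta}\wto\bmu$ weakly in $L^p(0,T;\VV)$. Minty's monotonicity trick identifies the weak limit of $\Psi^{(1)}_{\bph,\lambda}(\bph^{\lambda,\delta})$ with $\Psi^{(1)}_\bph(\bph)$, enforcing both $\bph\in\simcl$ a.e.~and the identity \eqref{wf:mu}; the a.e.~finiteness of $\Psi^{(1)}_\bph(\bph)$ upgrades this to $\bph\in\simap$ a.e. in $Q$. The reaction terms pass to the limit by dominated convergence using the $L^\infty$-bounds, and the initial conditions are recovered from the embedding $H^1(0,T;\VV^*)\hookrightarrow C([0,T];\VV^*)$. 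The weighted-in-time interior regularity $\bph\in L^4(\sigma,T;\WW)\cap L^2(\sigma,T;\WW^{2,r}(\Omega))$ for $\sigma>0$ then follows from a bootstrap on $(\sigma,T)$, where $\min\{\varphi^1_\Omega,\varphi^2_\Omega\}$ is uniformly bounded away from zero and the standard $L^2$-theory for the multicomponent Cahn--Hilliard system applies. The most delicate point throughout is the $L^p$-estimate on $\bmu_\Omega$, where the refined Property~\ref{MZ} and the linear growth of $\bph_\Omega$ must be finely balanced.
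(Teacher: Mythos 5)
Your overall strategy mirrors the paper's in several important respects: Moreau--Yosida regularization of $\Psi^{(1)}$, $L^\infty$-bounds on $P$ and $\br$ via the reaction structure and \eqref{max:ini:Linf}, the linear-in-time lower bound on $\min\{\varphi^1_\Omega(t),\varphi^2_\Omega(t)\}$ coming from the ODE for the spatial means, and the use of Property~\ref{MZ} to convert that into $L^p$-control on $\bmu_\Omega$. But there is a genuine gap in the key step, and the gap is serious.

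The problem is the schematic bound
\[
|\bmu_\Omega(t)|\leq \frac{C\bigl(1 + \|\nabla\bmu(t)\|\,\|\nabla\bph(t)\|\bigr)}{\min\{\varphi^1_\Omega(t),\varphi^2_\Omega(t)\}} + C\bigl(1 + F(\min\{\varphi^1_\Omega(t),\varphi^2_\Omega(t)\})\bigr).
\]
With $\min\{\varphi^1_\Omega(t),\varphi^2_\Omega(t)\}\geq\kappa t$ and only the energy-estimate bound $\|\nabla\bph\|_{L^\infty(0,T;\HH)}\leq C$, the first term is of order $\|\nabla\bmu(t)\|/t$. Given merely $\nabla\bmu\in L^2(0,T;\HH)$, the H\"older estimate you invoke gives $\|\nabla\bmu\|/t\in L^p(0,T)$ only for $p<2/3$, because it requires $t^{-1}\in L^{2p/(2-p)}(0,T)$, which forces $2p/(2-p)<1$. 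In particular, the claimed $L^p$-bound for every $p\in(1,2)$ does not follow, and there is no Hardy-type improvement since the singularity $1/t$ sits in front of the instantaneous value $\|\nabla\bmu(t)\|$ rather than a running integral. What rescues the argument in the paper is a preliminary \emph{small-time} estimate, proved by an iterative Gronwall argument on the identity obtained from testing the Cahn--Hilliard system by $\bph_\lambda$ and $-\Delta\bph_\lambda$: for every $\alpha\in(0,1)$ there is $C_\alpha$, independent of $\lambda$, with $\|\bph_\lambda\|_{L^\infty(0,t;\HH)\cap L^2(0,t;\WW)}\leq C_\alpha t^\alpha$ for all $t$. This is exactly what is needed because the numerator in the mean-value estimate involves $\|\bph_\lambda(t)\|$ (by Poincar\'e--Wirtinger, not $\|\nabla\bph_\lambda(t)\|$), so it carries an extra factor $t^\alpha$; the quotient then behaves like $t^{\alpha-1}(1+\|\nabla\bmu_\lambda\|+\|\bph_\lambda\|_\WW)$, and $t^{\alpha-1}\in L^\ell(0,T)$ for $(1-\alpha)\ell<1$, which combined with $\nabla\bmu_\lambda\in L^2$ and H\"older with conjugate exponents gives $L^p$ for all $p\in(1,2)$ once $\alpha$ is chosen close enough to $1$. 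Without this ingredient the small-time integrability is lost.

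Two further points worth flagging. First, the energy estimate is itself delicate here: testing by $\bmu_\lambda$ produces the term $\int_{Q_t}\Sph^\lambda\cdot\bmu_\lambda$, and since $\bmu_\lambda$ has no a priori control on its spatial mean, this term cannot be absorbed by $\|\nabla\bmu_\lambda\|$ alone. The paper handles it by decomposing $\bmu_\lambda$ and using the Young inequality for conjugate convex functions, together with an $L^1(Q)$-bound on $(\Psi^{(1)}_\lambda)^*\bigl(\Psi^{(1)}_{\lambda,\bph}(\bph_\lambda)\bigr)$ obtained from a separate dual estimate; this is not a formality and needs to be written out. Second, the truncation of the $\bph$-dependence in the sources (the paper replaces $\ph_i$ by $J^i_\lambda(\bph_\lambda)$) cannot be removed a posteriori: the regularized $\bph_\lambda$ takes values in all of $\erre^2$, not in $\simcl$, so the boundedness of the sources at the approximation level genuinely relies on keeping the truncation throughout, and the ODE for $(\ph_{\lambda,i})_\Omega$ then contains the discrepancy $(\ph_{\lambda,i}-J^i_\lambda(\bph_\lambda))_\Omega$, which must be controlled explicitly using the $\lambda^{1/2}\|\Psi^{(1)}_{\lambda,\bph}(\bph_\lambda)\|_{L^\infty(0,T;\HH)}\leq C$ bound, and then $\lambda$ must be taken small enough. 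Your sketch of the ODE argument works directly with the limiting sources, which is not available at the approximation level. Finally, the $\delta$-perturbation of the initial datum is a legitimate alternative route (the paper mentions it informally), but then you need uniformity of all constants as $\delta\to 0$, which reduces to exactly the same small-time analysis; so it does not sidestep the issue.
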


\begin{remark}
\label{iniP}
Let us point out that condition \eqref{max:ini:Linf} ensures
that the initial protein concentration $P_0$ allows the complex formation process to take place.
In particular, on account of \eqref{c}, $P_0$ is strictly separated from $0$ and $1$.
This fact is essential
in order to prove that the variables take their values in biologically-relevant range $[0,1]$.
Without \eqref{c}, the variables do not represent concentrations anymore. In addition,
the problem becomes mathematically intractable due to a lack of $L^\infty$-control on $P$ and $\br$.
Let us stress, nonetheless, that as strange as conditions \eqref{c} and \eqref{max:ini:Linf} may appear,
in the typical biological applications they are actually always satisfied. Indeed,
referring to {\cite[Table~1]{GFGN}} and recalling \eqref{c}, the threshold of separation
$(c_2+c_4)/\min\{c_1,c_3\}$ is of the order of {$10^{-2}$},
so that \eqref{max:ini:Linf} looks quite natural.
Besides, condition \eqref{max:ini:Linf} also plays a basic role in proving the min-max principles for $P$, $R_1$ and $R_2$ as well as
in analyzing the mass evolution of $\bph$ whose behavior is ruled by the source term $\Sph$ (see Section \ref{SEC:MEAN}).

\end{remark}

\begin{remark}
As mentioned in the Introduction, the full $\L2 \VV$-norm of $\bmu$ cannot be recovered as it usually happens
for similar Cahn--Hilliard type systems.
The main reason is that classical energy estimates give only an $\L2\HH$-bound of $\nabla \bmu$ (cf.~\eqref{est4}).
In the usual cases, for the Flory--Huggins potential \eqref{Flog} it is still possible to
obtain the expected $\L2 \VV$-regularity on $\bmu$ by using well-known methods based on the Property~\ref{MZ0}
(see also \cite{MZ}) combined with the Poincar\'e inequality,
provided that the mean value of
$\bph$ is uniformly confined in the interior of the simplex $\simap$.
This last condition is what is missing in our scenario, as the initial condition
$\bph(0)=\0$ in \eqref{ass:exweak:initialdata} entails $\bph_{\Omega}(0)=\0$.
Nonetheless, as soon as the evolution takes place, i.e., for $t\geq\sigma$
for any $\sigma\in(0,T)$, we are able to show that $\bph_\Omega(t)$
is contained in some compact $K_\sigma\subset\simap$.
This fact allows us to recover the classical $L^2(\sigma,T;\VV)$-regularity for every $\sigma>0$
by employing Property~\ref{MZ0}.
Actually, we can prove something much sharper.
By analyzing the behavior of $\bph_\Omega$ close to the initial time
by means of suitable qualitative estimates, and observing that the
Flory--Huggins potential \eqref{Flog} satisfies Property~\ref{MZ}, we can explicitly obtain integrability estimates
also when $\bph_\Omega$ squeezes to $\0$ in $\simap$, i.e., for small times.
It is worthwhile pointing out that this idea is almost sharp,
in the sense that, although one cannot recover the typical $L^2(0,T;\VV)$-regularity for
$\bmu$, the $L^p(0,T;\VV)$-regularity for any $p\in(1,2)$ can be achieved.
This seems a very reasonable price to pay, considering the apparently unusual, but reasonable for certain models,
initial condition $\bph(0)=\0$.
\end{remark}

\begin{remark}
As pointed out above, the crucial point concerns the initial condition $\bph(0)=\0$.
This forces us to distinguish the regularity framework at zero and after zero.
However, it is worth observing that  if the initial value $\bph_0$ is not $\0$, but of the form, for instance, $(\eps,\eps)$
for some small constant $\eps>0$, then all the regularities in Theorem~\ref{THM:EX:WEAK} hold also with the choice $\sigma=0$.
This can be easily deduced from the proof of Theorem~\ref{THM:EX:WEAK}.
\end{remark}

Further regularity properties for $P$ and $\br$ are given by

\begin{theorem}
\label{THM:REGULARITY}
Suppose that the assumption of Theorem \ref{THM:EX:WEAK} hold.
In addition, let
\begin{align}
	\label{P:regpar}
	P_0 \in V.
\end{align}
Then, every weak solution $(\bph,\bmu,P,\br)$ given by Theorem \ref{THM:EX:WEAK} is such that
\begin{align}
	\label{reg:P}
	P & \in	\H1 {H} \cap \L\infty {V} \cap \L2 {W},
	\\
	\label{reg:R}
	\br & \in \H1 {\HH} \cap \L\infty {\VV} \cap \L2 \WW.
\end{align}
\end{theorem}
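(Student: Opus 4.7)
The strategy is to revisit the approximation scheme employed in the proof of Theorem~\ref{THM:EX:WEAK} and derive two additional a priori estimates under the stronger assumption $P_0\in V$. Once bounds uniform in the approximation parameter are established, the claimed regularity will transfer to the limit by weak lower semicontinuity. First, I would note that since $P_0\in V$, the relation $\br_0=\tfrac{1-P_0}2\1$ immediately yields $\br_0\in\VV$, so that both evolution equations for $P$ and $\br$ start from $V$-regular data. Furthermore, by Theorem~\ref{THM:EX:WEAK} we already have $\bph,P,\br\in L^\infty(Q)$, with uniform estimates along the approximating sequences. Since $\SP$ and $\SR$ are polynomial in $(\bph,P,\br)$, this gives uniform bounds
\[
\|\SP\|_{L^\infty(Q)} + \|\SR\|_{\LL^\infty(Q)} \leq C.
\]

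The second step is the key energy estimate. At the approximation level, where solutions are regular enough to support the computation, I would test \eqref{wf:P} by $-\Delta P$. Using $\dn P=0$ and integration by parts,
\[
\tfrac{1}{2}\tfrac{d}{dt}\|\nabla P\|^2 + \|\Delta P\|^2 = -(\SP,\Delta P)\leq \tfrac{1}{2}\|\SP\|^2 + \tfrac{1}{2}\|\Delta P\|^2,
\]
hence $\tfrac{d}{dt}\|\nabla P\|^2 + \|\Delta P\|^2 \leq \|\SP\|^2$. Integrating on $(0,t)$ and using $P_0\in V$ together with the uniform bound on $\|\SP\|_{L^2(0,T;H)}$ yields
\[
\sup_{t\in[0,T]}\|\nabla P(t)\|^2 + \int_0^T\|\Delta P\|^2 \leq \|\nabla P_0\|^2 + C.
\]
The standard elliptic regularity estimate $\|P\|_W \leq C(\|\Delta P\| + \|P\|)$ for the Neumann Laplacian, combined with the $L^\infty(0,T;H)$-control of $P$, then delivers $P\in L^2(0,T;W)$. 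Recovering $\dt P\in L^2(0,T;H)$ is immediate from the equation $\dt P = \Delta P + \SP$, since both right-hand terms lie in $L^2(0,T;H)$.

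An entirely analogous argument applies component-wise to $\br$: testing \eqref{wf:R} by $-\Delta\br$ (equivalently, by $-\Delta R_i$ for each $i=1,2$), using $\br_0\in\VV$ and the uniform $\LL^\infty(Q)$-bound on $\SR$, yields
\[
\sup_{t\in[0,T]}\|\nabla\br(t)\|^2 + \int_0^T\|\Delta\br\|^2 \leq C,
\]
from which $\br\in L^\infty(0,T;\VV)\cap L^2(0,T;\WW)$ follows via the elliptic estimate for the Neumann Laplacian applied componentwise, and $\dt\br\in L^2(0,T;\HH)$ from the equation itself. All estimates being uniform in the approximation parameter, passing to the limit produces a solution satisfying \eqref{reg:P}--\eqref{reg:R}.

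The one delicate point is not the estimates themselves, which are entirely classical parabolic regularity, but rather justifying the test functions $-\Delta P$ and $-\Delta R_i$ at the approximation level. This is standard provided the approximation scheme of Theorem~\ref{THM:EX:WEAK} (for instance, a Faedo--Galerkin or a double regularization procedure in which the Cahn--Hilliard subsystem is also smoothed) gives approximants with $H^2$-spatial regularity in $P$ and $\br$. Since the estimates themselves do not interact with the structure of the Cahn--Hilliard subsystem (the role of $\bph$ is only through the $L^\infty$-bound and the polynomial dependence in $\SP,\SR$), no new difficulty arises, and the argument is a straightforward decoupled upgrade of the regularity for $P$ and $\br$ alone. \qed
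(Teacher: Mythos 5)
Your proof is correct and follows essentially the same route as the paper: both observe that the $L^\infty(Q)$-bounds from Theorem~\ref{THM:EX:WEAK} place the source terms $\SP$ and $\SR$ in $L^\infty(0,T;H)$ and $L^\infty(0,T;\HH)$, then invoke linear parabolic regularity theory (which you make explicit by testing with $-\Delta P$ and $-\Delta\br$) together with $P_0\in V$ and the induced $\br_0\in\VV$. The paper simply appeals to ``standard parabolic regularity theory'' as a black box where you carry out the energy estimate and elliptic bootstrap by hand.
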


We can also prove some weighted-in-time regularity of $\bph$, namely,

\begin{theorem}
\label{THM:REGULARITY:2d}
Suppose that \ref{ass:pot}--\ref{ass:sources}, \eqref{ass:exweak:initialdata}--\eqref{max:ini:Linf}, and
\eqref{P:regpar} hold, and assume that in Property~\ref{MZ} it also holds that
\begin{equation}
\label{ip_F}
 s \mapsto s^{\frac32-\alpha}F(s) \in L^\infty(0,R)\quad\forall\,\alpha\in(0,1).
\end{equation}
Then, there exists a global weak solution $(\bph,\bmu,P,\br)$,
in the sense of Theorems~\ref{THM:EX:WEAK}--\ref{THM:REGULARITY},
which also satisfies the following:
for every $\alpha\in(0,1)$, there exists a positive constant $C$,
depending on $\alpha$ and $(\bph,\bmu,P,\br)$, such that
\begin{align}
	&\norma{t^{\frac12}\partial_t\bph}_{L^{\infty}(0, T; \VVp)\cap L^2(0, T; \VV)}
	\label{reg:extra'}
	+\norma{t^{\frac32-\alpha}\bmu}_{L^\infty(0,T;\VV)}
	+\norma{t^{\frac12}\nabla\bmu}_{L^\infty(0,T;\HH)}
	\leq C.
\end{align}
\end{theorem}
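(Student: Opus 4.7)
The plan is to construct the claimed weak solution at the level of the Faedo-Galerkin (or Yosida-regularized) approximation used for Theorem~\ref{THM:EX:WEAK}, and to derive the weighted-in-time bounds uniformly in the approximation parameter so that they survive the limit passage. The backbone is to formally differentiate the Cahn-Hilliard system~\eqref{SYS:1}--\eqref{SYS:2} in time and to use $t$ as a multiplier: the $t^{1/2}$ weight in the statement is the natural factor compensating the lack of a global-in-time uniform bound on $\partial_t\bph$.

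First, differentiating \eqref{SYS:1} in time and noting that $(\partial_t\bph)_\Omega=(\Sph(\bph,P,\br))_\Omega$ is bounded in $L^\infty(0,T)$ thanks to $\bph,P,\br\in\LL^\infty(Q)$, I would test the time-differentiated equation against $t\,\bNN(\partial_t\bph-(\partial_t\bph)_\Omega)$. After integration by parts in time, using the convexity of $\Psi^{(1)}$ (which makes $\int_\Omega\Psi^{(1)}_{\bph\bph}(\bph)\partial_t\bph\cdot\partial_t\bph\geq0$), the Lipschitz property of $\Psi^{(2)}_\bph$, and the classical $L^2(0,T;\VVp)$-estimate on $\partial_t\bph$, one arrives at
\[
\tfrac{t}{2}\|\partial_t\bph(t)-(\partial_t\bph)_\Omega(t)\|_\bstar^2+\int_0^t s\,\|\nabla\partial_s\bph\|^2\,\mathrm{d}s\leq C,
\]
which combined with the already-known bound on $(\partial_t\bph)_\Omega$ produces $\|t^{1/2}\partial_t\bph\|_{L^\infty(0,T;\VVp)\cap L^2(0,T;\VV)}\leq C$. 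Next, since $-\Delta\bmu=\Sph-\partial_t\bph$ and both sides have zero spatial mean, one has $\bmu-\bmu_\Omega=\bNN(\partial_t\bph-\Sph)$, hence $\|\nabla\bmu\|_\HH=\|\partial_t\bph-\Sph\|_\bstar$; the bound on $t^{1/2}\nabla\bmu$ is then immediate from the previous step and the boundedness of $\Sph$.

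The most delicate point is the $L^\infty(0,T;\VV)$-bound on $t^{3/2-\alpha}\bmu$, which by Poincaré-Wirtinger reduces to controlling $t^{3/2-\alpha}|\bmu_\Omega(t)|$. Spatially integrating \eqref{wf:mu} with Neumann boundary conditions gives $\bmu_\Omega=(\Psi_\bph(\bph))_\Omega$; the Lipschitz part $\Psi^{(2)}_\bph$ is immediately controlled, so the issue reduces to $\int_\Omega|\Psi^{(1)}_\bph(\bph)|$, which I would estimate via Property~\ref{MZ}. Two ingredients are required. First, integrating \eqref{SYS:1} in space, using $\bph_0=\0$, and exploiting that \eqref{max:ini:Linf} forces $\Sph(\0,P_0,\br_0)$ to be strictly positive componentwise, one obtains the two-sided estimate $\delta t\leq\min\{\bph^1_\Omega(t),\bph^2_\Omega(t)\}\leq\bph^1_\Omega(t)+\bph^2_\Omega(t)\leq Ct$ for small $t$. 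Second, to bound the quantity $\int_\Omega\Psi^{(1)}_\bph(\bph)\cdot(\bph-\bph_\Omega)$ entering the right-hand side of \eqref{MZ_gen}, I rewrite it via \eqref{wf:mu} as
\[
\int_\Omega(\bmu-\bmu_\Omega)\cdot(\bph-\bph_\Omega)-\|\nabla\bph\|^2-\int_\Omega\Psi^{(2)}_\bph(\bph)\cdot(\bph-\bph_\Omega),
\]
and control it via Cauchy--Schwarz using the $t^{1/2}\|\nabla\bmu\|_\HH\leq C$ bound together with a quantitative decay of $\|\bph(t)-\bph_\Omega(t)\|_\HH$ as $t\to0^+$, obtained from integrating the energy identity for \eqref{SYS:1}--\eqref{SYS:2} and exploiting $\bph(0)=\0$ (so that both $\int\Psi(\bph(0))$ and $\|\nabla\bph(0)\|$ vanish). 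Inserting the hypothesis~\eqref{ip_F}, which forces $F(\delta t)\leq Ct^{\alpha-3/2}$, into \eqref{MZ_gen} and dividing by $\min\{\bph^i_\Omega(t)\}\geq\delta t$ then yields $|\bmu_\Omega(t)|\leq C(t^{\alpha-3/2}+\text{lower order})$, which delivers $t^{3/2-\alpha}|\bmu_\Omega(t)|\leq C$.

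The hardest part will be making the three key quantities in Property~\ref{MZ} interlock precisely enough -- the bound on $\int_\Omega\Psi^{(1)}_\bph(\bph)\cdot(\bph-\bph_\Omega)$, the near-linear behaviour of $\bph^i_\Omega(t)$ at $t=0^+$, and the integrability exponent in~\eqref{ip_F} -- so that \emph{every} $\alpha\in(0,1)$ becomes accessible; this is where the precise calibration of the weight $t^{3/2-\alpha}$ against $F$ plays a role. Once all the above estimates are established uniformly at the approximate level, one passes to the limit using weak-$*$ lower semi-continuity of the norms involved (together with the standard compactness tools already exploited in the proof of Theorem~\ref{THM:EX:WEAK}), thus producing a particular weak solution which satisfies~\eqref{reg:extra'}.
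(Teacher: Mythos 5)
Your overall strategy is the right one — a weighted second-energy estimate on the Cahn--Hilliard subsystem combined with the use of Property~\ref{MZ} to control $\bmu_\Omega$ near $t=0$ — and your step on the near-linear growth of $\bph^i_\Omega(t)$ and the use of \eqref{ip_F} is essentially what the paper does in Section~\ref{sec:chem} and in the proof of Theorem~\ref{THM:REGULARITY:2d}. However, there is a genuine gap in your first step, and it is not a cosmetic one.

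When you test the time-differentiated version of \eqref{SYS:1} against $t\,\bNN(\partial_t\bph-(\partial_t\bph)_\Omega)$, the term $-\langle\Delta\dt\bmu,\bNN(\dt\bph-(\dt\bph)_\Omega)\rangle_{\VV}$ reduces, via the identity $\langle\bRR u,\bNN v^*\rangle=\langle v^*,u\rangle$ and the time-derivative of \eqref{SYS:2}, to
\[
\|\nabla\dt\bph\|^2+\int_\Omega\Psi_{\bph\bph}(\bph)\dt\bph\cdot\dt\bph
-(\dt\bph)_\Omega\cdot\int_\Omega\Psi_{\bph\bph}(\bph)\dt\bph.
\]
The first two terms are exactly the ones you list, but you have omitted the third. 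Since $\int_\Omega\Psi_{\bph\bph}(\bph)\dt\bph=|\Omega|(\dt\bmu)_\Omega$ (integrate \eqref{SYS:2} differentiated in time, using the Neumann condition), the third term is $|\Omega|\,(\dt\bph)_\Omega\cdot(\dt\bmu)_\Omega$, and it does not disappear. After multiplication by $t$ and integration in time, the most dangerous piece is the boundary term $t(\Sph(t))_\Omega\cdot(\bmu(t))_\Omega$ arising from integration by parts; but a pointwise-in-time control on $\bmu_\Omega$ is precisely the thing you only derive in your third step, and \emph{that} step uses the $t^{1/2}\|\nabla\bmu\|_{\HH}\le C$ bound which you are trying to produce here. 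So, as written, your first step is circular. Nor can you estimate $\int_\Omega\Psi^{(1)}_{\bph\bph}(\bph)\dt\bph$ directly: in three dimensions (and even in two without separation), $\Psi^{(1)}_{\bph\bph}(\bph)$ is singular and not integrable.

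The paper sidesteps this issue by choosing the "symmetric" pair of test functions $t\dt\bmu$ for \eqref{SYS:1} and $-t\dt\bph$ for $\dt$\eqref{SYS:2}: the cross terms $\pm\int t\,\dt\bph\cdot\dt\bmu$ cancel exactly, so no mean-value manipulation is needed, and the resulting identity controls $t\|\nabla\bmu(t)\|^2$ and $\int_0^ts\|\nabla\dt\bph\|^2$ directly. A term involving $\bmu_\Omega$ still appears, but only through $t(\Sph(t),\bmu(t))$; the paper bounds $t^2|\bmu_\Omega(t)|^2$ pointwise in $t$ via the inequality \eqref{est8_prequel} (the limit form of the small-time estimate from Section~\ref{sec:chem}) together with \eqref{ip_F}, and then absorbs the resulting contribution into $t\|\nabla\bmu(t)\|^2$ via a Gronwall argument. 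Everything closes up in one shot, with no circularity. Your $\dt\bph$ bound is then obtained at the end by comparison in \eqref{SYS:1}, rather than being the starting point. If you want to keep your order of derivation (first $\dt\bph$, then $\nabla\bmu$, then $\bmu_\Omega$), you would have to interleave the \eqref{est8_prequel}-type inequality already at the first step to control the boundary term, exactly as the paper does, so the "first step follows from convexity and Lipschitzness alone" claim is not tenable.

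A second, smaller omission: you say to work at the level of the approximation used for Theorem~\ref{THM:EX:WEAK}, but that scheme only yields $\bph_\lambda\in H^1(0,T;\VV^*)$ and $\bmu_\lambda\in L^2(0,T;\VV)$, which is not enough regularity to justify testing against $t\dt\bmu_\lambda$ or differentiating the second equation in time. The paper therefore introduces a modified approximated problem \eqref{SYS:1_app_mod}--\eqref{SYS:7_app_mod} with a vanishing viscosity term $\lambda\partial_t\bph_\lambda$ in the $\bmu$-equation and with regularized initial data $P_{0,\lambda},\br_{0,\lambda}$, proves the enhanced regularity $\bph_\lambda\in H^2(0,T;\HH)\cap C^1([0,T];\VV)\cap H^1(0,T;\WW)$ etc., and only then performs the weighted testings rigorously. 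You would need the same modification to make the argument rigorous.
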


\subsection{On the unique continuation property of strong solutions}

In this section, we aim to present a continuous dependence estimate for regular enough solutions.
There are two major obstructions: the singular potential which can be exploited only through monotonicity and the initial condition which is a pure phase. This combination prevents the use of standard estimates.
Thus continuous dependence can be possibly achieved if, for instance, the potential is globally \Lip\ continuous.
This is the case if the solutions are regular enough and the components of $\bph$ are (strictly) separated from pure phases, i.e., they take values in a closed subset of $(0,1)$.
Since $\bph(0)=\0 \in \partial \Delta_\circ$, the best we can expect is a ``delayed" separation principle. Recall that $\bph(\s) \in \Delta_\circ$ for every $\sigma\in(0,T)$.
Thus, we should prove the following version of the separation principle: for every $\sigma\in(0,T)$,
there exists a compact subset $K=K(\sigma)$ of $\simap$ such that
\begin{align*}
	\bph(x,t) \in K \quad \text{ for a.e.~} x\in \Omega,\quad\forall\, t \in [\s,T].
\end{align*}
Now, let us recall that the model is characterized by three phases, namely, $\bph=(\varphi_1,\varphi_2)$ and $S = 1- \sum_{i=1}^2 \varphi_i$. Also,  $\Psi^{(1)}(\bph)$ is defined in \eqref{Flog}.
The separation property has been established so far in dimension two and for scalar
Cahn--Hilliard equations only (see \cite{GGM,MZ}). In order to adapt that argument to the present case, the term $S$ is critical since it
introduces mixed terms in the gradient and Hessian of $\Psi$ (as easy computations show) so that, for instance, \cite[Cor.~4.1]{GGM} cannot be extended by arguing componentwise.
Despite the problem concerning the initial condition can be overcome allowing some time delay, the validity of the separation principle for the present model seems to be an open problem.

Motivated by the above comments, following a standard approach, we replace $S \ln S$ in the definition \eqref{Flog} with a suitable polynomial approximation.
For such a regularized version, we can actually argue componentwise and reproduce the arguments employed in the two-components case.
Once we have the separation principle, it is not hard to prove a unique continuation property as a consequence of a delayed continuous dependence estimate.


More precisely, we restrict our attention to the Flory-Huggins potential \eqref{Flog} only,
and replace the mixing entropy $\Psi^{(1)}(\bph) $ by
\begin{align*}
	\widehat \Psi^{(1)}(\bph) & :=
	\begin{cases}
	\sum_{i=1}^2\varphi_i \ln \varphi_i + (1- \varphi_1-  \varphi_2) (- \varphi_1-  \varphi_2)
	&\text{if } \ph_i \in [0,1], \, i=1,2,
	\\
	+\infty &\text{otherwise.}
	\end{cases}
\end{align*}
Observe that, formally speaking, $\widehat \Psi^{(1)}\sim  \Psi^{(1)} $ as $S=1- \varphi_1-  \varphi_2 \sim 0$.
Now, the first order approximation $(1- \varphi_1-  \varphi_2) (- \varphi_1-  \varphi_2) $ can be absorbed into the quadratic perturbation $ \Psi^{(2)} $.
Upon setting $\widetilde  \Psi = \widetilde \Psi^{(1)}+\widetilde \Psi^{(2)} $ with
\begin{align}
	\label{psitilde:1}
	\widetilde \Psi^{(1)}(\bph) & :=
	\begin{cases}
	\sum_{i=1}^2\varphi_i \ln \varphi_i 	\quad&\text{if } \ph_i \in [0,1], \, i=1,2,
	\\
	+\infty \quad&\text{otherwise,}
	\end{cases}
	\\
	\widetilde \Psi^{(2)}(\bph) & :=
	\Psi^{(2)}(\bph)
	+ (1- \varphi_1-  \varphi_2) (- \varphi_1-  \varphi_2),
	\label{psitilde:2}
\end{align}
we consider system \eqref{SYS:1}--\eqref{SYS:4} subject to \eqref{SYS:5}--\eqref{SYS:8} where $\Psi$ is replaced by $\widetilde \Psi$. A corresponding solution to this problem
will be denoted by $(\widetilde \bph, \widetilde\bmu, \widetilde P, \widetilde\br)$.
All the results presented so far can be straightforwardly adapted with the only difference that the order parameter $\widetilde \bph$ now takes values in
\begin{align*} 
	 {\cal Q}_\circ := \{ \rr=(r_1,r_2) \in  (0,1) \times  (0,1)  \}\subset \erre^2,  \quad {\cal Q}_{\bullet}  := \overline{\cal Q_\circ},
\end{align*}
instead of $\Delta_\circ$ and $\Delta_\bullet$. This means that we can no longer guarantee that $S$ takes its values in $[0,1]$.
Observe that $\widetilde \Psi^{(1)}$ and $\widetilde \Psi^{(2)}$ satisfy \ref{ass:pot} with ${\cal Q}_{\circ}$ and ${\cal Q}_{\bullet}$ instead of $\Delta_\circ$ and $\Delta_\bullet$.
Moreover,  system \eqref{SYS:1}--\eqref{SYS:2} is now decoupled since $D_i \widetilde \Psi^{(1)}(\bph) =\ln \ph_i +1$, $i=1,2$, and no mixed terms appear.
In particular, the (easiest) scalar version of Property \ref{MZ} must be used.

Summing up, the following result can be thought as a corollary of our main existence theorem.

\begin{corollary}\label{approxmain}
Let the assumptions of Theorems~\ref{THM:EX:WEAK}, \ref{THM:REGULARITY}, and \ref{THM:REGULARITY:2d} be fulfilled
with ${\cal Q}_{\circ}$ and ${\cal Q}_{\bullet}$ instead of $\Delta_\circ$ and $\Delta_\bullet$.
Then, there exists a weak solution $(\widetilde \bph, \widetilde\bmu, \widetilde P, \widetilde\br)$, in the sense of Theorem \ref{THM:EX:WEAK}
to system \eqref{SYS:1}--\eqref{SYS:4} with \eqref{SYS:5}--\eqref{SYS:8} where $\Psi$ is replaced by $\widetilde \Psi$.
Moreover, the regularities in \eqref{reg:P}, \eqref{reg:R} as well as \eqref{reg:extra'} are fulfilled.

\end{corollary}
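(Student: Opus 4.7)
The plan is to show that Corollary~\ref{approxmain} is an immediate consequence of Theorems~\ref{THM:EX:WEAK}, \ref{THM:REGULARITY}, and \ref{THM:REGULARITY:2d} applied to the modified potential. To this end, I would verify that the pair $(\widetilde\Psi^{(1)},\widetilde\Psi^{(2)})$ satisfies all of the structural hypotheses of those theorems, now with the box $\mathcal{Q}_\bullet$ playing the role of the simplex $\Delta_\bullet$, and then observe that the proofs go through verbatim (in fact with a simplification, thanks to the decoupling).

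First I would check assumption~\ref{ass:pot}. The function $\widetilde\Psi^{(1)}(\bph)=\varphi_1\ln\varphi_1+\varphi_2\ln\varphi_2$ is the sum of two one-dimensional logarithmic entropies; it is proper, convex, nonnegative (up to a harmless additive constant), continuous on $\mathcal{Q}_\bullet$, and of class $C^2$ on $\mathcal{Q}_\circ$. On the other hand, the corrector $(1-\varphi_1-\varphi_2)(-\varphi_1-\varphi_2)$ is a polynomial of degree two, hence its gradient is globally Lipschitz on $\erre^2$; therefore $\widetilde\Psi^{(2)}=\Psi^{(2)}+\text{(polynomial)}\in C^2(\erre^2)$ with Lipschitz gradient, as required. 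Properties~\ref{MZ0}--\ref{MZ} would be verified componentwise: since $\widetilde\Psi^{(1)}_{\bph}(\bph)=(\ln\varphi_1+1,\ln\varphi_2+1)$, both inequalities reduce to the scalar logarithmic estimates of \cite{MZ,GMS09} for Property~\ref{MZ0}, and to the scalar version of Proposition~\ref{prop:log} for Property~\ref{MZ}, with the same choices $R=1/8$, $F(r)=|\ln(r/2)|$, and $q\in(2,+\infty)$ arbitrary. In particular, the extra condition \eqref{ip_F} of Theorem~\ref{THM:REGULARITY:2d} is fulfilled because $s^{\frac32-\alpha}|\ln(s/2)|$ is bounded near $s=0$ for every $\alpha\in(0,1)$.

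Once these assumptions are in place, the construction of $(\widetilde\bph,\widetilde\bmu,\widetilde P,\widetilde\br)$ proceeds by exactly the scheme of Section~\ref{sec:ex_weak}, the regularity bounds of Theorem~\ref{THM:REGULARITY} rely only on the already-verified properties of $\widetilde\Psi^{(2)}$ and on the reaction structure \ref{ass:sources}, which is untouched, and the weighted-in-time bounds of Theorem~\ref{THM:REGULARITY:2d} follow from the identical qualitative analysis of $\widetilde\bph_\Omega$ near $t=0$ combined with Property~\ref{MZ}. The min--max principles for $P,R_1,R_2$, and the mass-balance arguments of Section~\ref{SEC:MEAN}, do not involve $\Psi^{(1)}$ at all, so they transfer without change. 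The only conceptual modification is that $\widetilde\bph$ is now constrained to $\mathcal{Q}_\bullet=[0,1]^2$ rather than to $\Delta_\bullet$, so the solvent quantity $\widetilde S:=1-\widetilde\varphi_1-\widetilde\varphi_2$ can take negative values; this is harmless because $\widetilde\Psi^{(1)}$ no longer depends on $\widetilde S$, the obstructing mixed second derivatives coming from the $S\ln S$ term have disappeared, and all estimates localize to each component $\widetilde\varphi_i$ separately.

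The main (very mild) obstacle to flag is ensuring that the extra quadratic contribution in $\widetilde\Psi^{(2)}$ is treated as a Lipschitz perturbation in the estimates, and that the ``approach to a pure phase'' difficulty caused by $\widetilde\bph(0)=\0$ is handled exactly as in the proof of Theorem~\ref{THM:EX:WEAK}: the loss from $L^2(0,T;\VV)$ to $L^p(0,T;\VV)$ with $p\in(1,2)$ for $\widetilde\bmu$ is inherited unchanged. With these checks performed, existence, the regularity \eqref{reg:P}--\eqref{reg:R}, and the weighted bound \eqref{reg:extra'} all follow at once, thereby proving the corollary and setting the stage for the (now feasible) componentwise separation analysis that underlies the unique-continuation result of Section~\ref{SEC:UQ}.
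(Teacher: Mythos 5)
Your proposal takes essentially the same route as the paper: verify that $\widetilde\Psi^{(1)}$ and $\widetilde\Psi^{(2)}$ satisfy assumption~\ref{ass:pot} and Properties~\ref{MZ0}--\ref{MZ} (and \eqref{ip_F}) over $\mathcal{Q}_\circ$, $\mathcal{Q}_\bullet$, observe that the absence of the $S\ln S$ term decouples the gradient into $(\ln\varphi_1+1,\ln\varphi_2+1)$ so that the scalar version of Property~\ref{MZ} applies componentwise, and then invoke Theorems~\ref{THM:EX:WEAK}, \ref{THM:REGULARITY}, and \ref{THM:REGULARITY:2d} verbatim. This matches the paper's argument, which does not give a separate proof but instead performs exactly these checks in the discussion preceding the corollary; your only addition is spelling out the (correct) details slightly more fully.
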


We recall that \ref{ass:pot} (with ${\cal Q}_\circ$ and ${\cal Q}_\bullet $ instead of $\Delta_\circ$ and $\Delta_\bullet$) and \eqref{ip_F} are fulfilled by $\widetilde \Psi^{(1)}$ (see \eqref{psitilde:1}).
Moreover, note that
the potential $\widetilde \Psi^{(1)}$ satisfies the growth condition
\begin{equation}\label{growth}
\exists\,C>0:\quad\opnorm{{{\widetilde \Psi}_{\bph\bph}^{(1)}}  ( \rr )}
\leq e^{C |{\widetilde \Psi^{(1)}_{\bph}}(\rr)| +C} \qquad\forall\,\rr \in {\cal Q}_\circ,
\end{equation}
and is singular in the sense that
\begin{align}\label{growth2}
	\lim_{\substack{\rr\to\rr_0 \\ \rr\in {\cal Q}_\circ}}|\widetilde \Psi^{(1)}_\bph(\rr)|=+\infty \quad\forall\,\rr_0\in {\cal Q}_\circ \setminus{\cal Q}_\bullet.
\end{align}
These properties are essential in proving the separation principle.

\begin{theorem}
\label{THM:2d:separation}
Let $d=2$ and the assumptions of Theorems~\ref{THM:EX:WEAK}, \ref{THM:REGULARITY}, and \ref{THM:REGULARITY:2d} be fulfilled with ${\cal Q}_{\circ}$ and
${\cal Q}_{\bullet}$ instead of $\Delta_\circ$ and $\Delta_\bullet$.
Then, it holds that
\begin{align*}
		\norma{t^{\frac32-\alpha}\bph}_{L^\infty (0,T ; \WW^{2,{r}}(\Omega) )}
		+
		\norma{t^{\frac32-\alpha}\Psi_\bph(\bph)}_{L^\infty(0,T;\LL^r(\Omega))}
	 \leq C,
\end{align*}
and
\begin{align}
	\nonumber
&\norma{t^{\frac 12} e^{-C_\alpha t^{\alpha-\frac32}}\partial_t \widetilde \bph}_{L^{\infty}(0, T; \HH)\cap L^2(0, T; \WW)}
	+ \norma{t^{\frac 12}e^{-C_\alpha t^{\alpha-\frac32}}\widetilde \bmu}_{L^\infty (0,T; \WW )}\\
\label{reg:extra''}
	&\qquad +\norma{e^{-C_\alpha t^{\alpha-\frac32}}\widetilde \Psi_{\bph\bph}(\widetilde \bph)}_{L^\infty(0,T;\LL^r(\Omega))}\leq C
\end{align}
for any $r\in (1,\infty)$.
Eventually, for every $\sigma\in(0,T)$,
there exists a compact subset $K=K(\sigma)$ of ${\cal Q}_\circ$ such that
\begin{align}\label{sep}
	\widetilde \bph(x,t) \in K \quad \text{ for a.e.~} x\in \Omega,\quad\forall\, t \in [\s,T].
\end{align}
In particular, this entails that
\begin{align}\label{reg:extra}
	\norma{\widetilde \bph}_{L^\infty(\sigma, T; \HH^4(\Omega))} \leq C_\sigma \quad\forall\,\sigma\in(0,T).
\end{align}
\end{theorem}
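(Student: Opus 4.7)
\emph{Overall strategy.} The crucial feature of the regularization is that the new mixing entropy $\widetilde\Psi^{(1)}(\bph)=\varphi_1\ln\varphi_1+\varphi_2\ln\varphi_2$ is fully decoupled: each partial derivative $D_i\widetilde\Psi^{(1)}(\bph)=\ln\varphi_i+1$ depends only on $\varphi_i$, whereas both the coupling between the two phases and the demixing contribution are absorbed into the globally Lipschitz $\widetilde\Psi^{(2)}$. Consequently, the identity \eqref{wf:mu} splits into two scalar elliptic equations with a convex logarithmic nonlinearity coupled only through a Lipschitz perturbation, and one can essentially transplant the two-dimensional separation machinery of \cite{GGM,MZ}. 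The novelty is that every estimate must carry a time weight compensating the pure initial phase $\widetilde\bph(0)=\0$: the polynomial weight $t^{\frac32-\alpha}$ is inherited from the bound on $\widetilde\bmu$ in Theorem~\ref{THM:REGULARITY:2d}, whereas the exponential weight in \eqref{reg:extra''} is precisely what is needed to absorb the exponential growth \eqref{growth} of $\widetilde\Psi^{(1)}_{\bph\bph}$ once the polynomial blow-up of $|\widetilde\Psi_\bph(\widetilde\bph)|$ is substituted into it.

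\emph{Weighted $\LL^r$, $\WW^{2,r}$ and exponential estimates.} To obtain the first display, I would test the $i$-th scalar identity $-\Delta\varphi_i+D_i\widetilde\Psi^{(1)}(\widetilde\bph)+D_i\widetilde\Psi^{(2)}(\widetilde\bph)=\widetilde\mu_i$ with $|D_i\widetilde\Psi^{(1)}(\widetilde\bph)|^{r-2}D_i\widetilde\Psi^{(1)}(\widetilde\bph)$: convexity of each scalar logarithm makes the Laplacian contribution nonnegative, and H\"older combined with the Lipschitzianity of $\widetilde\Psi^{(2)}_\bph$ yields the pointwise-in-time estimate $\|\widetilde\Psi_\bph(\widetilde\bph)(t)\|_{\LL^r}\leq C(\|\widetilde\bmu(t)\|_{\LL^r}+1)$. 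Combined with $\VV\hookrightarrow\LL^r$ for every finite $r$ in $d=2$ and \eqref{reg:extra'}, this yields the weighted $\LL^r$-bound; elliptic $L^r$-regularity applied to $-\Delta\widetilde\bph=\widetilde\bmu-\widetilde\Psi_\bph(\widetilde\bph)$ with Neumann data then gives $\WW^{2,r}$. Substituting the ensuing polynomial blow-up of $|\widetilde\Psi_\bph(\widetilde\bph)|$ into \eqref{growth} produces the pointwise bound $|\widetilde\Psi_{\bph\bph}(\widetilde\bph)|\lesssim\exp(C_\alpha t^{\alpha-\frac32})$, which is the third contribution in \eqref{reg:extra''}. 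For the remaining estimates on $\partial_t\widetilde\bph$ and $\widetilde\bmu$, I would differentiate \eqref{wf:1} in time at the Galerkin level and perform a duality test against $\bNN\partial_t\widetilde\bph$; using the identity $\partial_t\widetilde\bmu=-\Delta\partial_t\widetilde\bph+\widetilde\Psi_{\bph\bph}(\widetilde\bph)\partial_t\widetilde\bph$, the resulting Gronwall inequality carries the multiplicative coefficient $\|\widetilde\Psi_{\bph\bph}(\widetilde\bph)\|_{\LL^r}$, which the cutoff $t\,e^{-C_\alpha t^{\alpha-\frac32}}$ is tailored to absorb, and an elliptic estimate for $-\Delta\widetilde\bmu=-\partial_t\widetilde\bph+\widetilde\Sph$ transfers the control to $\widetilde\bmu$ in $\WW$.

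\emph{Separation, $\HH^4$ regularity, and main obstacle.} For \eqref{sep}, I would run a Moser--Alikakos iteration in the spirit of \cite{MZ} on the scalar identity above: iterating the test with $p\to+\infty$ and carefully tracking both the dependence of the 2D Sobolev constant on $p$ and the time weight yields an $L^\infty((\sigma,T)\times\Omega)$-bound on $D_i\widetilde\Psi^{(1)}(\widetilde\bph)$ for every $\sigma\in(0,T)$; the singularity \eqref{growth2} then forces $\widetilde\bph(x,t)$ into a compact $K(\sigma)\Subset{\cal Q}_\circ$, proving \eqref{sep}. Once \eqref{sep} is established, on each $[\sigma,T]$ the potential $\widetilde\Psi$ is effectively $C^\infty$ on the range of $\widetilde\bph$, and a classical elliptic bootstrap starting from $\widetilde\bmu\in\WW$ and the equation $\widetilde\bmu=-\Delta\widetilde\bph+\widetilde\Psi_\bph(\widetilde\bph)$ yields the $\HH^4$ regularity \eqref{reg:extra}. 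The hard part of the whole argument is precisely the Moser--Alikakos step: one must show that the iteration constants, which inherit the polynomial time weight $t^{\alpha-\frac32}$ from the previous $L^r$-estimate, remain summable only for $t\geq\sigma>0$, so that the $L^\infty$-in-space bound and hence the separation hold uniformly on each $[\sigma,T]$ but cannot be extended up to $t=0$. This delay is structural, mirroring the vanishing of the initial phase, and is exactly what dictates the shape of both the polynomial and the exponential weights appearing in the statement.
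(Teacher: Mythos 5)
Your overall strategy matches the paper's: exploit the componentwise decoupling of the regularized mixing entropy, transplant the two-dimensional separation machinery of \cite{GGM,MZ}, and carry the polynomial and exponential time weights dictated by the vanishing initial phase. The first step (weighted $\LL^r$-bound on $\widetilde\Psi_\bph(\widetilde\bph)$ via testing with $|D_i\widetilde\Psi^{(1)}|^{r-2}D_i\widetilde\Psi^{(1)}$, then $\WW^{2,r}$ via elliptic regularity) is essentially what the paper does by invoking \cite[Cor.~4.1]{GGM}. However, there are two concrete gaps downstream.

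First, you obtain the weighted $\LL^r$-bound on $\widetilde\Psi_{\bph\bph}(\widetilde\bph)$ by ``substituting the polynomial blow-up of $|\widetilde\Psi_\bph(\widetilde\bph)|$ into \eqref{growth}.'' This does not work as stated: the blow-up you control is the $\LL^r$-norm of $\widetilde\Psi_\bph(\widetilde\bph)(t)$, of order $t^{\alpha-3/2}$, not a pointwise bound. The growth hypothesis $\opnorm{\widetilde\Psi^{(1)}_{\bph\bph}(\rr)}\leq e^{C|\widetilde\Psi^{(1)}_\bph(\rr)|+C}$ requires a pointwise input; to turn an $\LL^r$- (or rather $H^1$-) control of $\widetilde\Psi_\bph(\widetilde\bph)$ into an $\LL^r$-bound on $e^{C|\widetilde\Psi_\bph(\widetilde\bph)|}$ one must pass through the two-dimensional Moser--Trudinger inequality, exactly as in \cite[Lem.~5.1]{GGM}. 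Without this, the weighted $\LL^r$-estimate on $\widetilde\Psi_{\bph\bph}(\widetilde\bph)$, and hence everything that uses it, does not follow.

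Second, testing the time-differentiated equation against $\bNN\partial_t\widetilde\bph$ is one level too weak. That duality pairing, together with $\partial_t\widetilde\bmu=-\Delta\partial_t\widetilde\bph+\widetilde\Psi_{\bph\bph}(\widetilde\bph)\partial_t\widetilde\bph$, produces $\frac12\frac{d}{dt}\norma{\partial_t\widetilde\bph}_\bstar^2+\norma{\nabla\partial_t\widetilde\bph}^2\lesssim\dots$, hence $\partial_t\widetilde\bph\in L^\infty(0,T;\VVp)\cap L^2(0,T;\VV)$, whereas the theorem claims $\partial_t\widetilde\bph\in L^\infty(0,T;\HH)\cap L^2(0,T;\WW)$. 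Moreover, with only $\partial_t\widetilde\bph\in\VVp$, the elliptic comparison $-\Delta\widetilde\bmu=-\partial_t\widetilde\bph+\widetilde\Sph$ delivers $\widetilde\bmu\in\VV$, not $\WW$, and since $V\not\hookrightarrow L^\infty(\Omega)$ even in $d=2$, you cannot obtain the $L^\infty(L^\infty)$ control on $\widetilde\bmu$ that is the gateway to the separation property. The paper instead tests the differentiated first equation by $t e^{-C_\alpha t^{\alpha-3/2}}\partial_t\widetilde\bph$ and the differentiated chemical-potential relation by $-t e^{-C_\alpha t^{\alpha-3/2}}\Delta\partial_t\widetilde\bph$: this yields exactly $\norma{\partial_t\widetilde\bph}^2$ and $\norma{\Delta\partial_t\widetilde\bph}^2$ on the left, produces the unsigned term $(\partial_t(\widetilde\Psi_\bph(\widetilde\bph)),\Delta\partial_t\widetilde\bph)$ which is where the weighted $\LL^3$-bound on $\widetilde\Psi_{\bph\bph}(\widetilde\bph)$ enters, and then gives $\widetilde\bmu\in L^\infty(\WW)\hookrightarrow L^\infty(L^\infty)$. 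Your sketch conflates the two levels (you invoke the $\LL^r$-bound on $\widetilde\Psi_{\bph\bph}$, which is needed only for the higher-order test), and the chain you describe does not reach $\WW$-regularity of $\widetilde\bmu$. The Moser--Alikakos iteration you propose for the separation itself could be an alternative to the argument of \cite[Thm.~5.1]{GGM}, but only after $\widetilde\bmu\in L^\infty(L^\infty)$ has been secured.
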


The final result of this section is the claimed unique continuation property.

\begin{theorem}\label{THM:UNIQ:2d}
Let $d=2$ and suppose that the assumptions of Theorem~\ref{THM:2d:separation} hold.
Let $(\widetilde \bph^i, \widetilde \bmu^i, \widetilde P^i, \widetilde \br^i)_i$, $i=1,2$, be two solutions
given by Corollary~\ref{approxmain} associated with the initial data
$P_0^i $ and $ \br_0^i$, $i=1,2$, satisfying the assumptions \eqref{ass:exweak:initialdata}--\eqref{max:ini:Linf} and \eqref{P:regpar}.
Then, for every $\sigma\in(0,T)$,
there exists a constant $C_\sigma>0$ depending on $(\widetilde \bph^i, \widetilde \bmu^i, \widetilde P^i, \widetilde \br^i)_i$, $i=1,2$, such that
\begin{align}
	\non
	& \norma{\widetilde \bph^1-\widetilde  \bph^2}_{L^\infty(\s,T; \HH) \cap \Ls2 {\HH^2(\Omega)}}\\
	&\quad + \big\|{\widetilde P^1- \widetilde P^2}\big\|_{\Ls\infty H \cap \Ls2 V}
	+ \big\|{\widetilde \br^1-\widetilde \br^2}\big\|_{\Ls\infty \HH \cap \Ls2 \VV}
	\non
	\\
	& \quad  \label{est:cd:2d}
	\leq C_\sigma \big({\norma{\widetilde \bph^1(\sigma)-\widetilde \bph^2(\sigma)}}+ \big\|{\widetilde P^1(\sigma)-\widetilde P^1(\sigma)}\big\|+\big\|{\widetilde \br^1(\sigma)-\widetilde \br^2(\sigma)}\big\| \big).
\end{align}
\end{theorem}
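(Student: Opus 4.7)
The plan is to exploit the strict separation granted by Theorem~\ref{THM:2d:separation} in order to reduce the problem to a standard continuous dependence argument for a Cahn--Hilliard system with globally Lipschitz potential, coupled with two reaction-diffusion equations, executed on $[\sigma,T]$. Applying Theorem~\ref{THM:2d:separation} to both solutions and taking the closure of the union of the two separated regions, I find a common compact $K=K(\sigma)\subset{\cal Q}_\circ$ with $\widetilde\bph^i(x,t)\in K$ for a.e.~$x\in\Omega$ and every $t\in[\sigma,T]$, $i=1,2$. Since $\widetilde\Psi\in C^2({\cal Q}_\circ)$, this produces a constant $L_\sigma>0$ such that
\[
|\widetilde\Psi_\bph(\widetilde\bph^1)-\widetilde\Psi_\bph(\widetilde\bph^2)|\leq L_\sigma|\widetilde\bph^1-\widetilde\bph^2|\qquad\text{a.e.~in }\Omega\times[\sigma,T].
\]
Setting $\bph:=\widetilde\bph^1-\widetilde\bph^2$, $\bmu:=\widetilde\bmu^1-\widetilde\bmu^2$, $P:=\widetilde P^1-\widetilde P^2$, $\br:=\widetilde\br^1-\widetilde\br^2$, and $\Sph^i:=\Sph(\widetilde\bph^i,\widetilde P^i,\widetilde\br^i)$ (analogously for $\SP^i,\SR^i$), the polynomial form of the reaction terms together with the $L^\infty$ bounds from Theorem~\ref{THM:EX:WEAK} yields, after enlarging $L_\sigma$ if necessary,
\[
|\Sph^1-\Sph^2|+|\SP^1-\SP^2|+|\SR^1-\SR^2|\leq L_\sigma\bigl(|\bph|+|P|+|\br|\bigr).
\]

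The essential subtlety is that $\Sph^1-\Sph^2$ is generally not mean-free, so the mean $\bph_\Omega(t)$ of the difference is not conserved and must be tracked separately. I split $\bph=\bph^*+\bph_\Omega$ with $\bph^*_\Omega=\0$ and extract from the spatial mean of the difference of \eqref{wf:1} the ODE $\frac{d}{dt}\bph_\Omega=|\Omega|^{-1}\int_\Omega(\Sph^1-\Sph^2)$, giving $|\bph_\Omega(t)|^2\leq 2|\bph_\Omega(\sigma)|^2+C_\sigma\int_\sigma^t(\norma{\bph}^2+\norma{P}^2+\norma{\br}^2)\,ds$. For the mean-free part, I test the difference of \eqref{wf:1} with $\bNN\bph^*\in\VV_0$, use \eqref{wf:mu}, integration by parts, and the identity $\nabla\bph^*=\nabla\bph$ to obtain
\[
\tfrac12\tfrac{d}{dt}\norma{\bph^*}_\bstar^2+\norma{\nabla\bph}^2+\int_\Omega\bph^*\cdot\bigl(\widetilde\Psi_\bph(\widetilde\bph^1)-\widetilde\Psi_\bph(\widetilde\bph^2)\bigr)=\int_\Omega(\Sph^1-\Sph^2)\cdot\bNN\bph^*.
\]
The Lipschitz bound on $\widetilde\Psi_\bph$, the interpolation \eqref{interpol_two}, the Poincar\'e inequality \eqref{poincare2} applied to $\bph^*$, and Young's inequality absorb the potential and source contributions into $\tfrac12\norma{\nabla\bph}^2$ plus zero-order terms controlled by $\norma{\bph^*}_\bstar^2$, $|\bph_\Omega|^2$, $\norma{P}^2$, $\norma{\br}^2$. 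Testing the differences of \eqref{wf:P} and \eqref{wf:R} with $P$ and $\br$ respectively produces the companion energy bound $\tfrac{d}{dt}(\norma{P}^2+\norma{\br}^2)+\norma{\nabla P}^2+\norma{\nabla\br}^2\leq C_\sigma(\norma{\bph}^2+\norma{P}^2+\norma{\br}^2)$. Setting $Y(t):=\norma{\bph^*(t)}_\bstar^2+|\bph_\Omega(t)|^2+\norma{P(t)}^2+\norma{\br(t)}^2$, the three bounds combine into
\[
\tfrac{d}{dt} Y+c_0\bigl(\norma{\nabla\bph}^2+\norma{\nabla P}^2+\norma{\nabla\br}^2\bigr)\leq C_\sigma Y,
\]
and Gronwall's lemma on $[\sigma,t]$ yields $Y(t)\leq e^{C_\sigma(t-\sigma)}Y(\sigma)$, while integration on $[\sigma,T]$ controls the gradients in $L^2$.

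To upgrade $\bph$ to the $L^2(\sigma,T;\HH^2)$-regularity required in \eqref{est:cd:2d}, I use the chemical potential identity $-\Delta\bph=\bmu-(\widetilde\Psi_\bph(\widetilde\bph^1)-\widetilde\Psi_\bph(\widetilde\bph^2))$ with Neumann elliptic regularity. The mean of $\bmu$, which equals $|\Omega|^{-1}\int_\Omega(\widetilde\Psi_\bph(\widetilde\bph^1)-\widetilde\Psi_\bph(\widetilde\bph^2))$, is bounded by $CL_\sigma\norma{\bph}$; the norm $\norma{\nabla\bmu}_\HH$ in $L^2(\sigma,T)$ is recovered by testing the difference of \eqref{wf:1} with $\bmu$ itself, using \eqref{wf:mu} to convert $\langle\partial_t\bph,\bmu\rangle$ into $\tfrac12\tfrac{d}{dt}\norma{\nabla\bph}^2$ plus a remainder $\langle\partial_t\bph,\widetilde\Psi_\bph(\widetilde\bph^1)-\widetilde\Psi_\bph(\widetilde\bph^2)\rangle$ that is controlled via the high-order bounds \eqref{reg:extra''} granted by Theorem~\ref{THM:2d:separation}. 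Finally, $\norma{\bph}^2\leq 2\norma{\bph^*}^2+2|\Omega||\bph_\Omega|^2$ together with $\norma{\bph^*}^2\leq c_\Omega\norma{\nabla\bph}\,\norma{\bph^*}_\bstar$ reassembles the $L^\infty(\sigma,T;\HH)\cap L^2(\sigma,T;\VV)$-norm of $\bph$ and closes \eqref{est:cd:2d}. The hardest part is precisely the coupling between the nonvanishing mean $\bph_\Omega$ and the $\bstar$-norm estimate on $\bph^*$: without the separation property of Theorem~\ref{THM:2d:separation} the singular character of $\Psi^{(1)}$ would forbid any Lipschitz bound on $\widetilde\Psi_\bph$, and without the ODE control of $\bph_\Omega$ the classical Cahn--Hilliard Gronwall argument would not close, since the reaction terms break mass conservation of $\bph$ alone.
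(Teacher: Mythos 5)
Your route differs substantially from the paper's: you run an $H^{-1}$-type estimate (testing the Cahn--Hilliard difference equation by $\bNN\bph^*$ after splitting off the mean $\bph_\Omega$, which you track by an ODE), and then try to upgrade to the $\HH^2$-norm by a second energy estimate tested against $\bmu$. The paper does something much shorter: it tests the difference of \eqref{SYS:1} directly by $\bph$ and the difference of \eqref{SYS:2} by $-\Delta\bph$ (and the reaction--diffusion differences by $P$ and $\br$), so that the "good" dissipative term is $\norma{\Delta\bph}^2$ from the start. In two dimensions, once the separation property \eqref{sep} makes $\widetilde\Psi_\bph$ globally Lipschitz on the common compact $K(\sigma)$, this $L^2$-level test is admissible and closes in one pass by Young and Gronwall, giving simultaneously the $L^\infty(\sigma,T;\HH)$ and $L^2(\sigma,T;\HH^2)$ control of $\bph$ from only $\norma{\bph(\sigma)}$. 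No mean-splitting and no second estimate is needed; the $\bNN$-based argument you use is precisely the one described in Remark~\ref{REM:UQ} as the fallback in dimension three, where the separation is unavailable and the simpler $L^2$ test cannot be used.

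There is, moreover, a genuine gap in your upgrade step. When you test the difference of \eqref{wf:1} by $\bmu$ and rewrite $\<\partial_t\bph,\bmu>$ as $\tfrac12\tfrac{d}{dt}\norma{\nabla\bph}^2$ plus a remainder, integrating the resulting differential inequality over $(\sigma,t)$ necessarily produces $\norma{\nabla\bph(\sigma)}^2$ on the data side. But the right-hand side of \eqref{est:cd:2d} contains only the $L^2$-norms of the differences at time $\sigma$, and $\norma{\nabla\bph(\sigma)}$ is not controlled by $\norma{\bph(\sigma)}$. You cannot replace it by the absolute bound coming from \eqref{reg:extra}, because that does not vanish when the two solutions coincide at time $\sigma$, so the unique continuation content of the theorem would be lost. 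This matters also for the $L^\infty(\sigma,T;\HH)$ part of the statement: your first estimate alone yields $\bph^*\in L^\infty(\sigma,T;\VV_0^*)$ and $\nabla\bph\in L^2(\sigma,T;\HH)$, and the interpolation $\norma{\bph^*}^2\leq\norma{\nabla\bph}\,\norma{\bph^*}_{\bstar}$ therefore gives at best $\bph\in L^4(\sigma,T;\HH)$, not $L^\infty(\sigma,T;\HH)$, so you genuinely need the upgrade estimate to close the stated norm. Since that estimate does not close with the admissible data, the argument as written does not prove \eqref{est:cd:2d}. Switching to the paper's direct test by $\bph$ and $-\Delta\bph$ sidesteps both problems at once.
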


\begin{remark}
Here we have approximated $S\ln S$ with a first-order truncation. However, any polynomial approximation should work, possibly changing the approximation of the singular part.
\end{remark}


\color{black}

\section{Proof of Theorem~\ref{THM:EX:WEAK}}
\label{sec:ex_weak}
The proof is divided into several steps. First{,} all the nonlinearities are suitably approximated and the corresponding
problem is shown to have a unique solution $(\bph_\lambda, P_\lambda,\br_\lambda)$ via a fixed-point argument, $\lambda>0$ being the approximation parameter that eventually will go to zero.
Then we show that the approximating variables $P_\lambda$, $R_{\lambda,1}$, and $R_{\lambda,2}$ take value in an interval $(c_*,1]$, $c_*>0$, which is independent of $\lambda$. The strict separation from $0$ requires a Moser-type argument.
After that, further a priori bounds on the approximating solutions are obtained.
Concerning the Cahn--Hilliard system, we point our that some estimates are nonstandard. In particular, we need to control the behavior of $\bph$ as $t\to 0$ because of the null initial condition. This will be helpful for the last crucial bounds.
Moreover, the first basic estimate for $\bph_\lambda$ and $\nabla \bmu_\lambda$ will be handled by using the convex conjugate of $\Psi^{(1)}_\lambda$ because of the presence of the source and the null initial condition, being $\Psi^{(1)}_{\lambda}$ a suitable regularization of $\Psi^{(1)}$. Before passing to the limit as $\lambda \to 0$, we are left to find convenient integrable bounds of $\bph_\lambda$, $\bmu_\lambda$, and $\Psi^{(1)}_{\lambda,\bph}(\bph_\lambda)$ in a small right neighborhood of the initial time, where $\Psi^{(1)}_{\lambda,\bph}$ denotes the gradient of the aforementioned regularization $\Psi^{(1)}_\lambda$.
This is the main novelty of the proof and of the paper itself. Taking advantage of the previous bounds, we give careful estimates of the integral means of the $\bph$-components. These estimates are then employed to uniformly control $L^p$-in-time-norms in some time interval $(0,T_0) \subset (0,T)$ for $p\in (1,2)$. Thus we can eventually let $\lambda$ go to zero and get a weak solution to our original problem.

\subsection{The approximating problem}\label{ssec:approx}
We first introduce suitable approximations of the nonlinear terms.
For any $\lambda\in(0,1)$, let $\Psi_\lambda^{(1)}:\erre^2\to\erre$
denote the Moreau--Yosida regularization of the convex function $\Psi^{(1)}$,
and set $\Psi_\lambda:=\Psi^{(1)}_\lambda + \Psi^{(2)}$.
In particular, we have
\[
  \Psi_\lambda\in C^2(\erre^2), \qquad
  \Psi_{\lambda,\bph} := \nabla \Psi_\lambda \in C^{0,1}(\erre^2;\erre^2).
\]
Let also
\[
  \bj_\lambda=(J_\lambda^1, J_\lambda^2):\erre^2\to\simap, \qquad
  \bj_\lambda:=(I_{\erre^2} + \lambda\Psi^{(1)}_\bph)^{-1},
\]
denote the resolvent of the maximal monotone operator $\Psi^{(1)}_\bph$.
This means that, for every $\rr\in\erre^2$, $\bj_\lambda(\rr)$ is the unique
element in $\simap$ such that
$\bj_\lambda(\rr) + \lambda\Psi^{(1)}_\bph(\bj_\lambda(\rr))=\rr$.
We recall that the Yosida approximation $\Psi_{\lambda,\bph}^{(1)}$ satisfies
$\Psi_{\lambda,\bph}^{(1)}(\rr) = \Psi_{\bph}^{(1)}(\bj_\lambda(\rr))$ for every
$\rr\in\erre^2$.

In principle, the expected maximum and minimum inequalities for $P$ and $\br$ given in
Theorem~\ref{THM:EX:WEAK}
may not be preserved in the approximated problem.
To force this, we need to introduce a suitable truncation function:
this will allow us to prove the minimum ad maximum principles
at every step of the approximation (see \eqref{sep_PR}).
Namely, we set
\begin{alignat}{2}
\label{h_def1}
&h:\erre\to\erre, \qquad
&&h(r):=\max\{0, \min\{r,1\}\}, \quad r\in\erre,
\end{alignat}
and define the corresponding truncated source terms
\begin{align*}
	\Sph^\lambda,\SR^\lambda:\erre^2 \times \erre \times \erre^2 \to \erre^2,
	\qquad
	\SP^\lambda:\erre^2 \times \erre \times \erre^2 \to \erre,
\end{align*}
by setting, for every $(\bphi,p,\rr)\in\erre^{2}\times\erre\times\erre^2$,
\begin{align}	
	\label{def:sorgenti_app}
	\Sph^\lambda(\bphi, p, \rr)& =
	- \SR^\lambda(\bphi, p, \rr)=
	(c_1 h(p) h(r_1) - c_2 J_\lambda^1(\bphi),
	c_3 h(p) h(r_2) - c_4 J_\lambda^2(\bphi)),
	\\
	\label{def:Sp_app}
	\SP^\lambda(\bphi, p, \rr) & =
	-c_1 h(p) h(r_1) + c_2 J_\lambda^1(\bphi)
	- c_3 h(p) h(r_2) + c_4 J_\lambda^2(\bphi).
\end{align}
Then the $\lambda$-approximation of our original problem reads
\begin{alignat}{2}
	\label{SYS:1_app}
	\dt \bph_\lambda - \Delta\bmu_\lambda
	&= \Sph^\lambda(\bph_\lambda, P_\lambda, \br_\lambda)
		&&\qquad \text{in $Q$},
		\\	\label{SYS:2_app}
	\bmu_\lambda &= -\Delta \bph_\lambda + \Psi_{\lambda,\bph}(\bph_\lambda)
		&&\qquad \text{in $Q$},
		\\
	\label{SYS:3_app}
	\dt P_\lambda - \Delta P_\lambda
	&=  \SP^\lambda(\bph_\lambda, P_\lambda, \br_\lambda)
		&&\qquad \text{in $Q$},\\
	\label{SYS:4_app}
	\dt \br_\lambda - \Delta \br_\lambda
	&=  \SR^\lambda(\bph_\lambda, P_\lambda, \br_\lambda)
		&&\qquad \text{in $Q$},\\
	\label{SYS:5_app}
	\dn\bph_\lambda &= \dn\bmu_\lambda  =\dn\br_\lambda
		= \0, \quad \dn P_\lambda= 0
		&&\qquad \text{on $\Sigma$},\\
	\label{SYS:7_app}
	\bph_\lambda(0)
	&= \bph_0 =\0, \quad
	\br_\lambda(0)
	= \br_0 , \quad
	P_\lambda(0)
	= P_0
	&&\qquad \text{in $\Omega$}.
	\end{alignat}
In order to show the existence of a weak solution to \eqref{SYS:1_app}--\eqref{SYS:7_app} we employ a fixed point argument.
In this subsection we omit the dependence on $\lambda$ for the sake of simplicity.

Let then $\ov\bph \in L^2(0,T;\HH)$ be fixed,
and consider the following auxiliary system in the
variables $(P_\lambda^{\ov\bph}, \br_\lambda^{\ov\bph})$:
\begin{alignat*}{2}
	\dt P_\lambda^{\ov\bph} - \Delta P_\lambda^{\ov\bph}
	&=  \SP^\lambda(\ov\bph, P_\lambda^{\ov\bph}, \br_\lambda^{\ov\bph})
		&&\qquad \text{in $Q$},\\
	\dt \br_\lambda^{\ov\bph} - \Delta \br_\lambda^{\ov\bph}
	&=  \SR^\lambda(\ov\bph, P_\lambda^{\ov\bph}, \br_\lambda^{\ov\bph})
		&&\qquad \text{in $Q$},\\
	\dn\br_\lambda^{\ov\bph}
		&= \0, \quad
	\dn P_\lambda^{\ov\bph}= 0
		&&\qquad \text{on $\Sigma$},\\
	\br_\lambda^{\ov\bph}(0)
	&= \br_0 , \quad
	P_\lambda^{\ov\bph}(0)
	= P_0
	&&\qquad \text{in $\Omega$},
	\end{alignat*}
where the initial data are chosen as in \eqref{ass:exweak:initialdata}--\eqref{max:ini:Linf}.
By the boundedness and Lipschitz continuity of the cut-off function $h$,
by the fact that $\bj_\lambda(\ov\bph)\in\simap$, and
by \eqref{def:sorgenti_app}--\eqref{def:Sp_app}, one can easily check that the
time-dependent operators
\begin{align*}
&\SP^\lambda(\ov\bph(\cdot),\cdot,\cdot):[0,T]\times H\times \HH\to H, \quad
  (t,p,\rr)\mapsto\SP^\lambda(\ov\bph(t), p,\rr),\\
&\SR^\lambda(\ov\bph(\cdot),\cdot,\cdot):[0,T]\times H\times \HH\to \HH, \quad
  (t,p,\rr)\mapsto\SR^\lambda(\ov\bph(t), p,\rr), \
\end{align*}
for $(t,p,\rr) \in[0,T]\times H\times \HH$,
are Lipschitz continuous and bounded in the last two variables,
uniformly in $t\in[0,T]$.
Consequently, well-known results yield the existence and uniqueness of a weak solution
\[
  P_\lambda^{\ov\bph} \in H^1(0,T; V^*)\cap L^2(0,T; V), \qquad
  \br_\lambda^{\ov\bph} \in H^1(0,T; \VV^*)\cap L^2(0,T;\VV).
\]
At this point, we can plug
$(P_\lambda^{\ov\bph},\br_\lambda^{^{\ov\bph}})$ into the
first equation, and consider the following Cahn--Hilliard equation with source $\Sph^\lambda$ in the variables $(\bph_\lambda^{\ov \bph},\bmu_\lambda^{\ov \bph})$
\begin{alignat*}{2}
	\dt \bph_\lambda^{\ov\bph} - \Delta\bmu_\lambda^{\ov\bph}
	&= \Sph^\lambda(\ov\bph, P_\lambda^{\ov\bph}, \br_\lambda^{\ov\bph})
		&&\qquad \text{in $Q$},
		\\
	\bmu_\lambda^{\ov\bph} &=
	-\Delta \bph_\lambda^{\ov\bph} + \Psi_{\lambda,\bph}(\bph_\lambda^{\ov\bph})
		&&\qquad \text{in $Q$},
		\\
	\dn\bph_\lambda^{\ov\bph} &= \dn\bmu_\lambda^{\ov\bph} =\0
		&&\qquad \text{on $\Sigma$},\\
	\bph_\lambda^{\ov \bph}(0)
	&= \bph_0
	&&\qquad \text{in $\Omega$}.
	\end{alignat*}
Note that here the source term
$\Sph^\lambda(\ov\bph,P_\lambda^{\ov\bph}, \br_\lambda^{\ov\bph})$ is a fixed forcing term which is globally bounded, that is,
\[
\Sph^\lambda(\ov\bph,P_\lambda^{\ov\bph}, \br_\lambda^{\ov\bph}) \in \LL^\infty(Q).
\]
Consequently, since $\Psi_{\lambda,\bph}$ is now Lipschitz continuous,
a standard argument entails that the above problem admits a unique weak solution
\[
  \bph_\lambda^{\ov\bph} \in H^1(0,T; \VV^*)\cap L^\infty(0,T; \VV)\cap L^2(0,T; \HH^3(\Omega)),
  \quad
  \bmu^{\ov \bph}_\lambda \in \L2 \VV.
\]
Thus we can define the map
\[
  \Gamma_\lambda: L^2(0,T;\HH)\to
  H^1(0,T; \VV^*)\cap L^\infty(0,T; \VV)\cap L^2(0,T; \HH^3(\Omega)),
  \quad \Gamma_\lambda:\ov\bph\mapsto\bph_\lambda^{\ov\bph},
\]
and the existence of a weak solution to  \eqref{SYS:1_app}--\eqref{SYS:7_app}
follows from the existence of a fixed point for $\Gamma_\lambda$.
To this end, we use the Banach fixed-point theorem.
Indeed, it is immediate from the
classical parabolic theory to deduce the existence of a
positive constant $C_\lambda$ such that
\begin{align*}
  &\norma{P_\lambda^{\ov\bph_1}-
  P_\lambda^{\ov\bph_2}}_{L^\infty(0,T; H)\cap L^2(0,T; V)}
  +\norma{\br_\lambda^{\ov\bph_1}-
  \br_\lambda^{\ov\bph_2}}_{L^\infty(0,T; \HH)\cap L^2(0,T; \VV)}\\
  &\qquad \leq C_\lambda\norma{\ov\bph_1-\ov\bph_2}_{L^2(0,T;\HH)},
\end{align*}
for every $\ov\bph_1, \ov\bph_2\in L^2(0,T;\HH)$. Moreover,
well-known results on the Cahn--Hilliard equation with regular potential,
combined with the Lipschitz continuity of $\Psi_{\lambda,\bph}$, and the
Lipschitz continuity and boundedness of $h$ and $\bj_\lambda$ yield
(possibly updating the value of $C_\lambda$ at each step)
\begin{align*}
&\norma{\bph_\lambda^{\ov\bph_1}-
\bph_\lambda^{\ov\ph_2}}_{L^\infty(0,T;\HH)\cap L^2(0,T; \HH^2(\Omega))}\\
&\qquad\leq C_\lambda
\left(\norma{\Sph^\lambda(\ov\bph_1,P_\lambda^{\ov\bph_1}, \br_\lambda^{\ov\bph_1})-
\Sph^\lambda(\ov\bph_2,P_\lambda^{\ov\bph_2}, \br_\lambda^{\ov\bph_2})}_{L^2(0,T; \HH)}\right)\\
&\qquad\leq C_\lambda
\left(\norma{\ov\bph_1-\ov\bph_2}_{L^2(0,T;\HH)}
+\norma{P_\lambda^{\ov\bph_1}-P_\lambda^{\ov\bph_2}}_{L^2(0,T;H)}
+\norma{\br_\lambda^{\ov\bph_1}-\br_\lambda^{\ov\bph_2}}_{L^2(0,T;\HH)}\right).
\end{align*}
Collecting the estimates above, we then find $C_\lambda>0$ such that
\[
  \norma{\Gamma_\lambda(\ov\bph_1)-
  \Gamma_\lambda(\ov\bph_2)}_{L^\infty(0,T;\HH)\cap L^2(0,T; \HH^2(\Omega))}
  \leq C_\lambda\norma{\ov\bph_1-\ov\bph_2}_{L^2(0,T;\HH)}
\]
for all $\ov\bph_1,\ov\bph_2\in L^2(0,T;\HH)$.
In particular, using the H\"older inequality, we find that
\[
  \norma{\Gamma_\lambda(\ov\bph_1)-
  \Gamma_\lambda(\ov\bph_2)}_{L^2(0,T_0;\HH)}
  \leq C_\lambda T_0^{1/2}\norma{\ov\bph_1-\ov\bph_2}_{L^2(0,T_0;\HH)}
  \quad\forall\,\ov\bph_1,\ov\bph_2\in L^2(0,T_0;\HH)
\]
for every arbitrary $T_0\in(0,T]$. Consequently,
provided to choose $T_0$ sufficiently small, e.g., $T_0<C_\lambda^{-2}$,
an application of the Banach fixed point theorem gives the
existence and uniqueness of a weak solution $(\bph_\lambda, \bmu_\lambda,
P_\lambda,\br_\lambda)$
to \eqref{SYS:1_app}--\eqref{SYS:7_app} on $[0,T_0]$.
Noting that this procedure is independent of the initial time
but only depends on the length $T_0$ of the time-interval, so that a standard
argument allows to extend the solution to a global weak solution $(\bph_\lambda, \bmu_\lambda, P_\lambda,\br_\lambda)$
to the whole time interval $[0,T]$. Moreover, such a solution satisfies the following properties
\begin{align*}
  \bph_\lambda &\in H^1(0,T; \VV^*)\cap L^\infty(0,T; \VV)\cap L^2(0,T; \HH^3(\Omega)),\\
  \bmu_\lambda &\in L^2(0,T; \VV),\\
  P_\lambda &\in H^1(0,T; V^*)\cap L^2(0,T; V), \\
  \br_\lambda &\in H^1(0,T; \VV^*)\cap L^2(0,T;\VV).
\end{align*}

\subsection{Boundedness of the approximating concentrations}
Here we prove the following boundedness result for the approximating concentrations $P_\lambda$ and $\br_\lambda$.
\begin{lemma}\label{lem1}
In the current setting, there exists a threshold
\begin{equation}
\label{smallness}
c_*\in\left(0,\frac{c_2+c_4}{8\min\{c_1,c_3\}}\right],
\end{equation}
independent of $\lambda$, such that, for every $t\in[0,T]$,
\begin{align}
  \label{sep_PR}
  c_*\leq P_\lambda(t)\leq 1, \quad
  c_*\leq R_{\lambda,i}(t)\leq 1\quad\text{a.e.~in $\Omega$}, \quad i=1,2.
\end{align}
\end{lemma}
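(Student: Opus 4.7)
The plan is to establish the four inequalities in a definite order that avoids the circular interdependence between upper and lower bounds, by exploiting a linear combination of the unknowns that satisfies a pure heat equation. First I would secure the nonnegativity $P_\lambda, R_{\lambda,i} \geq 0$ by testing the $P_\lambda$-equation against $-(P_\lambda)^-$ and each $R_{\lambda,i}$-equation against $-(R_{\lambda,i})^-$: on the sets $\{P_\lambda < 0\}$ and $\{R_{\lambda,i} < 0\}$, the truncation $h$ in \eqref{def:sorgenti_app}--\eqref{def:Sp_app} annihilates the destructive product terms, leaving only the nonnegative creation contributions $c_{2j} J_\lambda^j \geq 0$ (recall $\bj_\lambda(\bph_\lambda) \in \simap$). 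Combined with the nonnegative initial data, a standard energy estimate closes this step.

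Next, I would observe that the auxiliary quantity $v_\lambda := P_\lambda - R_{\lambda,1} - R_{\lambda,2}$ solves the homogeneous heat equation with Neumann boundary data, because the structural identity $\SP^\lambda = \SR^\lambda \cdot \1$ is preserved by the approximation. With $v_\lambda(0) = 2P_0 - 1$ and $\|v_\lambda(0)\|_{L^\infty(\Omega)} \leq 1 - \gamma$ following from \eqref{max:ini:Linf}, where $\gamma := (c_2+c_4)/\min\{c_1,c_3\} \in (0,1)$, the parabolic maximum principle yields $|v_\lambda| \leq 1 - \gamma$ a.e.~in $Q$. I would then establish $R_{\lambda,i} \leq 1$ first, by testing with $(R_{\lambda,i} - 1)^+$: on $\{R_{\lambda,i} > 1\}$ the lower bound $v_\lambda \geq -(1-\gamma)$ together with $R_{\lambda,j} \geq 0$ forces $P_\lambda > \gamma$, so that $h(P_\lambda) \geq \gamma$, and the corresponding component of $\SR^\lambda$ is bounded above by $-c_{2i-1}\gamma + c_{2i}$, which is nonpositive thanks to \eqref{c}. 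Subsequently, $P_\lambda \leq 1$ follows from testing with $(P_\lambda - 1)^+$: on $\{P_\lambda > 1\}$ the upper bound $v_\lambda \leq 1 - \gamma$ combined with the just-proved $R_{\lambda,i} \leq 1$ yields $R_{\lambda,1} + R_{\lambda,2} > \gamma$, so the destruction $c_1 R_{\lambda,1} + c_3 R_{\lambda,2}$ strictly exceeds $\min\{c_1,c_3\}\gamma = c_2 + c_4$ and dominates the creation $c_2 J_\lambda^1 + c_4 J_\lambda^2 \leq c_2+c_4$, yielding $\SP^\lambda < 0$ there.

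Once $P_\lambda, R_{\lambda,i} \in [0,1]$, the cut-offs become inactive and the three scalar equations reduce to linear reaction--diffusion inequalities of the form $\partial_t u - \Delta u + a\, u \geq 0$ with a nonnegative $L^\infty(Q)$-coefficient $a$ depending only on the $c_j$'s. A Moser-type iteration --- equivalently, a weak comparison principle implemented by testing the difference $u - \alpha\, e^{-\|a\|_{L^\infty(Q)} t}$ against its negative part with $\alpha := \min_\Omega u(0)$ --- produces the pointwise lower bound $u(t) \geq e^{-C T} \min_\Omega u(0)$ for each of $P_\lambda, R_{\lambda,1}, R_{\lambda,2}$. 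Setting $c_*$ to the minimum of these three exponentially decayed initial-data bounds and of $(c_2+c_4)/(8\min\{c_1,c_3\})$ produces a positive threshold, independent of $\lambda$, satisfying \eqref{smallness}. The main obstacle is precisely the circular interdependence of the upper bounds on $P_\lambda$ and the $R_{\lambda,i}$: the whole argument relies crucially on the heat-equation reduction for $v_\lambda$, combined with the sharp initial-data assumption \eqref{max:ini:Linf} and the coefficient inequality \eqref{c}, to close the loop.
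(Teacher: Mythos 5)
Your proposal is correct and covers all four bounds, but it takes a genuinely different route for the positive lower bound, and in a different order, from what the paper does. The nonnegativity step and the upper-bound step coincide with the paper's Steps~1 and~3: the paper introduces $G:=\br_\lambda\cdot\1-P_\lambda=-v_\lambda$, observes that $\SP^\lambda=\SR^\lambda\cdot\1$ makes $G$ solve a homogeneous Neumann heat problem, applies the maximum principle $\|G\|_{L^\infty}\leq\|1-2P_0\|_{L^\infty}\leq 1-\gamma$, and then tests by $(R_{\lambda,i}-1)_+$ and $(P_\lambda-1)_+$ exactly as you describe, splitting the respective superlevel sets by the size of $P_\lambda$ (resp.\ $R_{\lambda,1}+R_{\lambda,2}$) and using \eqref{c}. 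Where you diverge is the strict separation from $0$ (the paper's Step~2): the paper runs an honest Moser iteration, testing \eqref{PAR:SYS:1} by $\gamma_{p,\eps}(P_\lambda)\approx -p/P_\lambda^{p+1}$, passing $\eps\searrow0$ by monotone convergence to get a Gronwall inequality for $\|1/P_\lambda(t)\|_{L^p(\Omega)}^p$, and then sending $p\to\infty$; the paper does this \emph{before} the upper bounds, and the argument is self-contained because it only uses $0\leq h\leq 1$, not $P_\lambda\leq 1$. You instead establish the upper bounds first and then invoke a barrier/comparison argument: once the cut-offs are inactive the coefficient $a$ is bounded in $L^\infty(Q)$ by constants depending on the $c_j$'s (you say ``depending only on the $c_j$'s'', which is imprecise --- $a$ depends on the solution, but $\|a\|_{L^\infty(Q)}$ doesn't), and testing $w:=u-\alpha e^{-\|a\|_{L^\infty}t}$, $\alpha:=\operatorname{ess\,inf}_\Omega u(0)>0$, against $-(w)_-$ closes it. This is more elementary (no truncated power functions, no $p\to\infty$), but note that calling it ``equivalently, a Moser-type iteration'' overstates the identification --- it is a comparison principle, not an iteration --- and, unlike the paper's Step~2, it is order-dependent: it needs the upper bounds to already be in place both to deactivate $h$ and to control $\|a\|_{L^\infty}$. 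Both routes produce a positive, $\lambda$-independent threshold, and taking the minimum with $(c_2+c_4)/(8\min\{c_1,c_3\})$ as you propose is a legitimate way to enforce \eqref{smallness}.
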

This result will be very helpful in the sequel. Furthermore, the above bounds agree with the physical interpretation of $P$ and the components
of $\br$ as concentrations. The strict separation from zero is a (non obvious) consequence of the initial conditions
(see Remark~\ref{iniP}).

\begin{proof}[Proof of Lemma~\ref{lem1}]
Also this proof is divided into several steps and it is based on a qualitative analysis of the parabolic system
\begin{alignat}{2}
	\label{PAR:SYS:1}
	\dt P_\lambda -\Delta P_\lambda+ (c_1 h(R_{\lambda,1}) +
	c_3 h(R_{\lambda,2}))h(P_\lambda)
	&= c_2 J_\lambda^1(\bph_{\lambda})  + c_4 J_\lambda^2(\bph_{\lambda})
		&&\qquad \text{in $Q$},\\
	\label{PAR:SYS:2}
	\dt R_{\lambda,1} - \Delta R_{\lambda,1} + c_1 h(P_\lambda) h(R_{\lambda,1})
	&= c_2 J_\lambda^1(\bph_{\lambda})
		&&\qquad \text{in $Q$},\\
	\label{PAR:SYS:3}
	\dt R_{\lambda,2} - \Delta R_{\lambda,2} + c_3 h(P_\lambda) h(R_{\lambda,2})
	&= c_4 J_\lambda^2(\bph_{\lambda})
		&&\qquad \text{in $Q$},
	\end{alignat}
endowed with homogeneous Neumann boundary conditions
$\dn P_\lambda = \dn R_{\lambda,1}= \dn R_{\lambda,2}=0$
and initial data $P_\lambda(0)= P_0$ and $\br_\lambda(0)= \big(\frac {1-P_0}2\big) \1$.

\noindent
{\sc Step 1.} First of all, we show that $P_\lambda$ and $R_{\lambda,i}$, $i=1,2$, are nonnegative. To this end,
we multiply \eqref{PAR:SYS:1} by $-(P_\lambda)_{-} = P_\lambda \chi_{\{ P_\lambda<0\}}$,
where the negative part function $(\cdot)_{-}$ is defined by $(\cdot)_{-}:=\max\{-\cdot,0\}$,
and integrate over $Q_t:=\Omega\times[0,t],$ for an arbitrary $t \in (0,T)$. This gives
\begin{align*}
	&\frac 12\IO2 {(P_\lambda) _{-}}
	+ \int_{Q_t} |\nabla (P_\lambda)_{-}|^2
	- \int_{Q_t}[c_1 h(R_{\lambda,1}) +c_3 h(R_{\lambda,2})]h(P_\lambda) (P_\lambda)_{-}\\
	&\quad =  -\int_{Q_t}[c_2 J_\lambda^1(\bph_{\lambda})
	+ c_4J_\lambda^2(\bph_{\lambda})](P_\lambda)_{-},
\end{align*}
where the initial term $\tfrac 12\norma {(P_0)_{-}}^2 $ vanishes due to \eqref{P:minmax:ini}.
The third term on the \lhs\ is also zero
since $h(P_\lambda)=0$ on the set $\{P_\lambda<0\}$.
Moreover, the term on the \rhs\ is nonpositive
since $J_\lambda^i\geq0$ for $i=1,2$ by definition. Namely, it holds that
\begin{align*}
	 -\int_{Q_t} [c_2 J_\lambda^1(\bph_{\lambda} )
	 + c_4 J_\lambda^2(\bph_{\lambda})](P_\lambda)_{-}
	= \int_{Q_t \cap \{ P_\lambda < 0\}}
	[c_2 J_\lambda^1(\bph_{\lambda} )
	+ c_4 J_\lambda^2(\bph_{\lambda})]P_\lambda \leq 0.
\end{align*}
Therefore, it follows that
\[
  \frac 12\IO2 {(P_\lambda) _{-}}
	+ \int_{Q_t} |\nabla (P_\lambda)_{-}|^2 \leq0 \qquad\forall\,t\in[0,T],
\]
yielding that, by the arbitrariness of $t$, $P_\lambda \geq 0$ almost everywhere in $Q$.
Arguing along the same lines on the second and third equations
\eqref{PAR:SYS:2}--\eqref{PAR:SYS:3}, namely testing
\eqref{PAR:SYS:2} by $-(R_{\lambda,1})_-$ and
\eqref{PAR:SYS:3} by $-(R_{\lambda,2})_-$,
we obtain the nonnegativity of the components of $\br_\lambda$.
Hence, we have shown that
\begin{align}
	\label{min_princ}
	P_\lambda(x,t) \geq 0, \quad
	R_{\lambda,i}(x,t) \geq 0
	\quad
	\text{for a.e.~$(x,t)\in Q$},
	\quad i=1,2.
\end{align}

\noindent
{\sc Step 2.}
In order to show the separation from zero, we employ a Moser-type argument.
To this end, for every $p\geq2$ and $\eps>0$ we define the monotone functions
\[
  \gamma_{p,\eps}:\erre\to (-\infty,0), \qquad
  \gamma_{p,\eps}(r):=
  \begin{cases}
    -\frac{p}{r^{p+1}} \quad&\text{if } r\geq\eps,\\
    -\frac{p}{\eps^{p+1}} \quad&\text{if } r<\eps,
  \end{cases}
\]
and the convex functions
\[
  \widehat\gamma_{p,\eps}:\erre\to (0,+\infty), \qquad
  \widehat\gamma_{p,\eps}(r):=
  \begin{cases}
    \frac1{r^{p}} \quad&\text{if } r\geq\eps,\\
    -\frac{pr}{\eps^{p+1}} + \frac{1+p}{\eps^{p}} \quad&\text{if } r<\eps.
  \end{cases}
\]
Observe that, in particular, $\widehat\gamma_{p,\eps}'=\gamma_{p,\eps}$
and $\gamma_{p,\eps}$ is monotone Lipschitz continuous.
Testing \eqref{PAR:SYS:1} by $\gamma_{p,\eps}(P_\lambda)$ and integrating over $Q_t$ lead us to
\begin{align*}
  &\int_\Omega \widehat\gamma_{p,\eps}(P_\lambda(t))
  +\int_{Q_t}\gamma_{p,\eps}'(P_\lambda)|\nabla P_\lambda|^2
  =\int_\Omega \widehat\gamma_{p,\eps}(P_0)\\
  &\qquad
  +\int_{Q_t}(c_2J_\lambda^1(\bph_{\lambda})
  + c_4 J_\lambda^2(\bph_{\lambda}))\gamma_{p,\eps}(P_\lambda)
  -\int_{Q_t}(c_1 h(R_{\lambda,1}) + c_3 h(R_{\lambda,2}))
  h(P_\lambda)\gamma_{p,\eps}(P_\lambda).
\end{align*}
By the monotone convergence theorem and \eqref{min_princ} we deduce that
\[
  \lim_{\eps\searrow0}\int_\Omega \widehat\gamma_{p,\eps}(P_\lambda(t))
  =\int_{\Omega}\frac1{|P_\lambda(t)|^p}
  = \norma{\frac1{P_\lambda(t)}}_{L^p(\Omega)}^p
  .
\]
Besides,
the second term on the left-hand side is nonnegative by
the monotonicity of $\gamma_{p,\eps}$.
As for the first term on the right-hand side,
noting that assumption \eqref{max:ini:Linf} yields
$1/P_0 \in L^q(\Omega)$ for all $q\in[2,+\infty]$,
we have
\[
\limsup_{\eps\searrow0}
\int_\Omega \widehat\gamma_{p,\eps}(P_0)
\leq \int_\Omega\frac{1}{P_0^p} =
\norma{\frac1{P_0}}_{L^p(\Omega)}^p.
\]
Furthermore, the nonnegativity of $J_\lambda^i$, i=1,2, entails that
\[
  \int_{Q_t}(c_2J_\lambda^1(\bph_{\lambda})
  + c_4 J_\lambda^2(\bph_{\lambda}))\gamma_{p,\eps}(P_\lambda)\leq0.
\]
Eventually, using the monotone convergence theorem once more
together with the boundedness of $h$, we find that
\begin{align*}
  & \limsup_{\eps\searrow0}-
 \int_{Q_t}(c_1 h(R_{\lambda,1}) + c_3 h(R_{\lambda,2}))
  h(P_\lambda)\gamma_{p,\eps}(P_\lambda)
  \\ & \quad
  \leq {p(c_1+c_3)}\int_{Q_t}\frac{1}{P_\lambda^p}
  = {p(c_1+c_3)}
  \int_0^t\norma{\frac{1}{P_\lambda(s)}}_{L^p(\Omega)}^p \,\ds.
\end{align*}
Putting everything together and letting $\eps\searrow0$, we get
\[
  \norma{\frac1{P_\lambda(t)}}_{L^p(\Omega)}^p
  \leq  \norma{\frac1{P_0}}_{L^p(\Omega)}^p + {p(c_1+c_3)}
  \int_0^t\norma{\frac{1}{P_\lambda(s)}}_{L^p(\Omega)}^p\ds
  \qquad\forall\,t\in[0,T].
\]
By the Gronwall lemma, we infer that
\[
  \norma{\frac1{P_\lambda(t)}}_{L^p(\Omega)}^p \leq
  e^{{p(c_1+c_3)}T}\norma{\frac1{P_0}}_{L^p(\Omega)}^p
  \qquad\forall\,t\in[0,T],
\]
so, by the H\"older inequality and the fact that $p\geq2$, also that
\[
  \norma{\frac1{P_\lambda(t)}}_{L^p(\Omega)}
  \leq e^{(c_1+c_3)T}{|\Omega|^{1/p}}\norma{\frac1{P_0}}_{L^\infty(\Omega)}
  {\leq e^{(c_1+c_3)T}{|\Omega|^{1/2}}\norma{\frac1{P_0}}_{L^\infty(\Omega)}}
  \qquad\forall\,t\in[0,T].
\]
Exploiting assumption \eqref{max:ini:Linf} one has that
$P_0\geq\frac{c_2+c_4}{2\min\{c_1,c_3\}}$ almost everywhere in $\Omega$. Hence,
letting $p\to\infty$, this ensures in particular that
\[
  \norma{\frac1{P_\lambda(t)}}_{L^\infty(\Omega)} \leq
  \max\{4, e^{(c_1+c_3)T}{|\Omega|^{1/2}}\}\cdot\left(\frac{c_2+c_4}{2\min\{c_1,c_3\}}\right)^{-1}
  \quad\forall\,t\in[0,T].
\]
Therefore $P_\lambda$ is indeed separated from $0$ uniformly in $\lambda$, that is,
\[
  P_\lambda(t) \geq c_* \quad\text{a.e.~in } \Omega \quad\forall\,t\in[0,T],
\]
where
\[
  c_*:=\frac1{\max\{4, e^{(c_1+c_3)T}{|\Omega|^{1/2}}\}}\frac{c_2+c_4}{2\min\{c_1,c_3\}} \leq
  \frac{c_2+c_4}{8\min\{c_1,c_3\}}.
\]
The argument for the separation of $R_{\lambda,i}$, $i=1,2$, is entirely analogous.
It suffices to note that $R_{0,i}\geq \frac{c_2+c_4}{4\min\{c_1,c_3\}}$ almost everywhere in $\Omega$, thanks again to \eqref{max:ini:Linf}.

\noindent
{\sc Step 3.}
Let us focus now on the upper bounds.
We begin with the first component of $\br_\lambda$.
Testing \eqref{PAR:SYS:2} by $(R_{\lambda,1}-1)_+ = R_{\lambda,1} \chi_{\{R_{\lambda,1} >1\}}$,
where $(\cdot)_+:=\max\{0,\cdot\}$ denotes the positive part, we obtain
\begin{align*}
	&\frac 12 \IO2 {(R_{\lambda,1} - 1)_+}
	 + \int_{Q_{t,\lambda}^1 } |\nabla (R_{\lambda,1}-1)|^2
	  +\int_{Q_{t,\lambda}^1 } c_1 h(P_\lambda) h(R_{\lambda,1})(R_{\lambda,1}-1)\\
	   &\quad= c_2 \int_{Q_{t,\lambda}^1 }
	   J_\lambda^1(\bph_{\lambda})(R_{\lambda,1}-1),
\end{align*}
where we have set $Q_{t,\lambda}^1 :={Q_t \cap \{R_{\lambda,1} >1\}}$
and we have used $\frac 12 \norma{({R_{0}^{1}} - 1)_+}^2 =0$ (see \eqref{ass:exweak:initialdata}--\eqref{P:minmax:ini}).
Thus, taking into account that $h\equiv1$ on $[1,+\infty)$ and rearranging the terms, we deduce,
for every $t\in[0,T]$, that
\begin{equation}
\label{max:1}
	\frac 12 \IO2 {(R_{\lambda,1} - 1)_+}
	 + \int_{Q_{t,\lambda}^1 } |\nabla (R_{\lambda,1}-1)|^2
	 +\int_{Q_{t,\lambda}^1 } [c_1 h(P_\lambda)
	  - c_2 J_\lambda^1(\bph_{\lambda})](R_{\lambda,1}-1)
	   = 0.
\end{equation}
Besides, combining \eqref{PAR:SYS:1}--\eqref{PAR:SYS:3} we infer that the linear combination
$G  := \br_\lambda \cdot \1 - P_\lambda$ yields a solution to the parabolic system:
\begin{align*}
\begin{cases}
	\dt G - \Delta G = 0 \quad & \text{in $Q$,}
	\\
	\dn G =0 \quad & \text{on $\Sigma,$}
	\\
	G(0) = 1 - 2 P_0 \quad & \text{in $\Omega$.}
\end{cases}
\end{align*}
Therefore, using standard arguments and the minimum principle \eqref{P:minmax:ini}, we get
\begin{align}\label{confronto}
	0 \leq \norma{G(t)}_{L^\infty(\Omega)} \leq \norma{1-2P_0}_{L^\infty(\Omega)}
	\qquad \text{for a.e.~$t \in (0,T)$}.
\end{align}
Then, the definition of $G$ and \eqref{min_princ}
imply, for almost every $(x,t) \in Q$, that
\begin{align*}
	0 \leq R_{\lambda,1}(x,t) \leq
	R_{\lambda,1} (x,t)+ R_{\lambda,2}(x,t) \leq
	P_\lambda(x,t) + \norma{1-2P_0}_{L^\infty(\Omega)},
\end{align*}
and, in particular, we have that
\[
  R_{\lambda,1}, R_{\lambda,2}\leq
  R_{\lambda,1} + R_{\lambda,2}\leq
   P_\lambda + \norma{1-2P_0}_{L^\infty(\Omega)}
  \qquad\text{a.e.~in } Q.
\]
Thus, we can write
\begin{align*}
	Q_{t,\lambda}^1 =  Q_t  \cap \{R_{\lambda,1} >1 \} & =  Q_t  \cap
	\{R_{\lambda,1} >1 \} \cap \{ R_{\lambda,1} \leq
	P_\lambda + \norma{1-2P_0}_{L^\infty(\Omega)}\}
	\\
	& = Q_t  \cap  \{ 1 < R_{\lambda,1} \leq  P_\lambda + \norma{1-2P_0}_{L^\infty(\Omega)}\} .
\end{align*}
Bearing this in mind, we consider the last integrand in \eqref{max:1}. Recalling
the definition of $h$ and the fact that $J_\lambda^1\leq1$, we deduce that
\begin{align*}
	&\int_{Q_{t,\lambda}^1 } [c_1 h(P_\lambda) -
	c_2 J_\lambda^1(\bph_{\lambda})](R_{\lambda,1}-1)\\
	 &\quad=\int_{Q_{t,\lambda}^1\cap\{P_\lambda>1\} }
	[c_1  - c_2 J_\lambda^1(\bph_{\lambda})](R_{\lambda,1}-1)\\
	&\qquad+\int_{Q_{t,\lambda}^1\cap
	\{1-\norma{1-2P_0}_{L^\infty(\Omega)}<P_\lambda\leq1\} }
	[c_1 P_\lambda- c_2 J_\lambda^1(\bph_{\lambda})](R_{\lambda,1}-1)\\
	&\quad\geq \int_{Q_{t,\lambda}^1\cap\{P_\lambda>1\} }
	(c_1  - c_2)(R_{\lambda,1}-1)\\
	&\qquad+\int_{ Q_{t,\lambda}^1\cap\{1-\norma{1-2P_0}_{L^\infty(\Omega)}<P_\lambda\leq1\}}
	 \big[c_1 \big(1 - \norma{1-2P_0}_{L^\infty(\Omega)}\big) - c_2\big](R_{\lambda,1}-1).
\end{align*}
It is worth observing that the first term on the \rhs\ is nonnegative since $c_1>c_2$ by
assumption and $R_{\lambda,1}>1$ in $Q_{t,\lambda}^1$.
As for the second one,
\eqref{max:ini:Linf} implies in particular that $\norma{P_0-
\frac 12 }_{L^\infty(\Omega)} \leq \frac 12 (1 - \frac {c_2}{c_1})$,
whence also $\norma{1 - 2P_0}_{L^\infty(\Omega)} \leq 1 - \frac {c_2}{c_1}$.
It follows that $c_1(1-\norma{1 - 2P_0}_{L^\infty(\Omega)})\geq c_2$,
so that also the second term on the left-hand side is nonnegative.
Collecting all the information, we obtain
\[
  \frac 12 \IO2 {(R_{\lambda,1} - 1)_+}
	 + \int_{Q_{t,\lambda}^1 } |\nabla (R_{\lambda,1}-1)|^2 \leq0 \qquad\forall\,t\in[0,T],
\]
which entails
\begin{align*}
	R_{\lambda,1}(x,t) \leq 1 \quad \text{for a.e.~$(x,t) \in Q.$}
\end{align*}
Arguing in a similar fashion for $R_{2,\lambda}$ in equation \eqref{PAR:SYS:3},
using that from \eqref{max:ini:Linf} we have also
$\norma{P_0-\frac 12 }_{L^\infty(\Omega)} \leq \frac 12 (1 - \frac {c_4}{c_3})$,
where $c_3>c_4$ from \eqref{c}, we also get
\begin{align*}
	R_{\lambda,2}(x,t) \leq 1 \quad \text{for a.e.~$(x,t) \in Q.$}
\end{align*}
The upper bound for $P_\lambda$ can be proven in a similar way.
Let us test \eqref{PAR:SYS:1} by $(P_\lambda-1)_+$. This gives
\begin{align*}
	& \frac 12 \IO2 {(P_\lambda - 1)_+}
	 + \int_{Q_t \cap \{P_\lambda >1\}} |\nabla (P_\lambda-1)|^2\\
	  &\qquad+\int_{Q_t \cap \{P_\lambda >1\}}
	  [c_1h(R_{\lambda,1}) + c_3h(R_{2,\lambda})]h(P_\lambda)(P_\lambda-1)\\
	  &\quad =
	  \int_{Q_t \cap \{P_\lambda >1\}}
	  [c_2J_\lambda^1(\bph_{\lambda})
	  + c_4J_\lambda^2(\bph_{\lambda})](P_\lambda-1),
\end{align*}
where we have used that $\frac 12 \norma{(P_0 - 1)_+}^2=0$
on account of \eqref{P:minmax:ini}.
We know that $0\leq R_{\lambda,i}\leq1$ almost everywhere in $Q$, for $i=1,2$. Thus
by definition of $h$ and the fact that $J_\lambda^i\leq1$ for $i=1,2$, we infer that
\begin{align*}
	& \frac 12 \IO2 {(P_\lambda - 1)_+}
	 + \int_{Q_t \cap \{P_\lambda >1\}} |\nabla (P_\lambda-1)|^2
	 \\ & \quad
	  +\int_{Q_t \cap \{P_\lambda >1\}}
	  [(\min\{c_1,c_3\}(R_{\lambda,1} + R_{\lambda,2})
	    - (c_2 +c_4)](P_\lambda-1) \leq 0.
\end{align*}
On the other hand, from \eqref{min_princ}, \eqref{confronto}, and
the definition of $G$ we also notice that,
for almost every $(x,t) \in Q$, we have
\begin{align*}
	0 \leq P_{\lambda}(x,t) \leq
	R_{\lambda,1} (x,t)+ R_{\lambda,2}(x,t)
	 + \norma{1-2P_0}_{L^\infty(\Omega)},
\end{align*}
so that, in particular, it holds that
\[
  P_{\lambda}\leq
  R_{\lambda,1} + R_{\lambda,2}
  + \norma{1-2P_0}_{L^\infty(\Omega)}
  \qquad\text{a.e.~in } Q.
\]
Bearing this in mind, we notice that
\begin{align*}
	Q_t \cap  \{P_\lambda >1 \}  &  = Q_t \cap
	\{P_\lambda >1 \} \cap \{ R_{1,\lambda}+R_{2,\lambda} \geq
	P_\lambda - \norma{1-2P_0}_{L^\infty(\Omega)}\}
	\\ & = Q_t \cap
	\{R_{1,\lambda}+R_{2,\lambda}
	\geq  P_\lambda - \norma{1-2P_0}_{L^\infty(\Omega)}
	> 1- \norma{1-2P_0}_{L^\infty(\Omega)} \}.
\end{align*}
Thus, it follows that
\begin{align*}
	& \frac 12 \IO2 {(P_\lambda - 1)_+}
	 + \int_{Q_t \cap \{P_\lambda >1\}} |\nabla (P_\lambda-1)|^2
	 \\ & \quad
	  +\int_{Q_t \cap \{P_\lambda >1\}}
	  \big[\min\{c_1,c_3\}\big(1- \norma{1-2P_0}_{L^\infty(\Omega)}\big)
	    - (c_2 +c_4)\big](P_\lambda-1) \leq 0.
\end{align*}
Observing that \eqref{max:ini:Linf} implies
$\min\{c_1,c_3\}(1- \norma{1-2P_0}_{L^\infty(\Omega)})\geq c_2+c_4$,
we conclude that
\begin{align*}
	& \frac 12 \IO2 {(P_\lambda - 1)_+}
	 + \int_{Q_t \cap \{P_\lambda >1\}} |\nabla (P_\lambda-1)|^2 \leq 0
	 \qquad\forall\,t\in[0,T],
\end{align*}
which entails
\begin{align*}
	P(x,t) \leq 1 \quad \text{for a.e. $(x,t) \in Q.$}
\end{align*}
Summing up, we have proven that
\begin{align}
	\label{max_princ}
	P_\lambda(x,t) \leq 1, \quad
	R_{\lambda,i}(x,t) \leq 1
	\quad
	\text{for a.e.~$(x,t)\in Q$},
	\quad i=1,2,
\end{align}
independently of $\lambda$.
\end{proof}
We conclude by pointing out that, owing to Lemma \ref{lem1}, from now on we can avoid writing the truncation function $h$ explicitly because of \eqref{sep_PR}.

\subsection{Preliminary estimates on the Cahn--Hilliard system}
\label{sec:prelim_est}
Here we provide uniform estimates, independent of $\lambda$,
that help in controlling the behavior of the phase variable $\bph_\lambda$ close to the initial time. As we pointed out, this
is necessary to find a suitable integrable control of the chemical potential $\bmu_\lambda$ or, equivalently, of $\Psi_{\lambda,\bph}^{(1)}(\bph_\lambda)$.
We remind that the standard strategy cannot be used because of the initial condition $\bph_\lambda(0)= \0.$
As usual, we will denote by $C$ a generic positive constant, independent of $\lambda$,
whose value may be updated at each step in the following.

We test equation \eqref{SYS:1_app} by $\bph_\lambda$ and
equation \eqref{SYS:2_app} by $-\Delta\bph_\lambda$. Adding the resulting identities together and integrating  by parts, we infer that
\begin{align*}
  &\frac12\norma{\bph_\lambda(t)}^2 + \int_{Q_t}|\Delta\bph_\lambda|^2
  +\int_{Q_t}\Psi_{\lambda,\bph}^{(1)}(\bph_\lambda)\cdot(-\Delta\bph_\lambda)\\
  &\quad =\int_{Q_t}\Sph^\lambda(\bph_\lambda, P_\lambda, \br_\lambda)\cdot\bph_\lambda
  -\int_{Q_t}\Psi_{\bph}^{(2)}(\bph_\lambda)\cdot(-\Delta\bph_\lambda).
\end{align*}
On account of the boundedness of $h$ and $\bj_\lambda$ and the Lipschitz continuity of $\Psi^{(2)}_\bph$,
on the right-hand side we have
\begin{align*}
  &\int_{Q_t}\Sph^\lambda(\bph_\lambda, P_\lambda, \br_\lambda)\cdot\bph_\lambda
  -\int_{Q_t}\Psi_{\bph}^{(2)}(\bph_\lambda)\cdot(-\Delta\bph_\lambda)\\
  &\quad \leq C\int_0^t\norma{\bph_\lambda(s)}\,\ds
  +\frac12\int_{Q_t}|\Delta\bph_\lambda|^2 + C\int_0^t\norma{\bph_\lambda(s)}^2\,\ds.
\end{align*}
The convexity of $\Psi^{(1)}_\lambda$ implies that
also the third term on the left-hand side is nonnegative. To show this{,}
we use the notation
\[
  \Psi^{(1)}_{\lambda,\bph}(\phi_1,\phi_2)=
  \nabla\Psi^{(1)}_{\lambda}(\phi_1,\phi_2) =
  \big(D_1\Psi^{(1)}_{\lambda}(\phi_1,\phi_2), D_2\Psi^{(1)}_{\lambda}(\phi_1,\phi_2) \big),
  \quad(\phi_1,\phi_2)\in\erre^2.
\]
Elementary computations give
\begin{align*}
  &\int_{Q_t}\Psi_{\lambda,\bph}^{(1)}(\bph_\lambda)\cdot(-\Delta\bph_\lambda)
   =\sum_{i=1,2}\int_{Q_t}(D_i\Psi_{\lambda}^{(1)})(\bph_\lambda)(-\Delta\ph_{\lambda,i})\\
  &\quad =\sum_{i=1,2}\int_{Q_t}
  \nabla[(D_i\Psi_{\lambda}^{(1)})(\bph_\lambda)]\cdot\nabla\ph_{\lambda,i}
  \\ \non &  \quad
  =\sum_{i=1,2}\sum_{j=1,2}\int_{Q_t}
  D_j[(D_i\Psi_{\lambda}^{(1)})(\bph_\lambda)]D_j\ph_{\lambda,i}\\
  &\quad =\int_{Q_t}
  (D_1D_1\Psi_\lambda^{(1)})(\bph_\lambda)
  \left(|D_1\ph_{\lambda,1}|^2 + |D_2\ph_{\lambda,1}|^2\right)\\
  &\qquad+
  \int_{Q_t}
  (D_2D_2\Psi_\lambda^{(1)})(\bph_\lambda)
  \left(|D_1\ph_{\lambda,2}|^2 + |D_2\ph_{\lambda,2}|^2\right)\\
  &\qquad+2\int_{Q_t}
  (D_1D_2\Psi_\lambda^{(1)})(\bph_\lambda)
  \left((D_1\ph_{\lambda,1})(D_1\ph_{\lambda,2}) +
  (D_2\ph_{\lambda,1})(D_2\ph_{\lambda,2})\right)\\
  &\quad =\int_{Q_t}(D_1\bph_\lambda)^\top \Psi_{\lambda,\bph\bph}^{(1)}(\bph_\lambda)(D_1\bph_\lambda)
  + \int_{Q_t}(D_2\bph_\lambda)^\top \Psi_{\lambda,\bph\bph}^{(1)}(\bph_\lambda)(D_2\bph_\lambda) \geq0,
\end{align*}
where the last inequality follows from the convexity of $\Psi_\lambda^{(1)}$.
As a consequence, rearranging the terms, we are left with
\begin{equation}\label{est_iteration}
  \frac12\norma{\bph_\lambda(t)}^2 + \frac12\int_{Q_t}|\Delta\bph_\lambda|^2
  \leq C\int_0^t\norma{\bph_\lambda(s)}\,\ds
  + C\int_0^t\norma{\bph_\lambda(s)}^2\,\ds
\end{equation}
and the Gronwall lemma, along with elliptic regularity, yields
\[
  \norma{\bph_\lambda}_{L^\infty(0,T;\HH)\cap L^2(0,T;\WW)}\leq C.
\]
At this point, one can plug this estimate into \eqref{est_iteration} again, to deduce that
\[
  \norma{\bph_\lambda}_{L^\infty(0,t;\HH)\cap L^2(0,t; \WW)}\leq Ct^{1/2}
  \qquad\forall\,t\in[0,T],
\]
possibly updating the value of $C$ independently of $\lambda$.
Iterating the argument{,} i.e., using \eqref{est_iteration} once more{,}
we infer the refinement
\[
  \norma{\bph_\lambda}_{L^\infty(0,t;\HH)\cap L^2(0,t; \WW)}\leq Ct^{3/4}
  \qquad\forall\,t\in[0,T].
\]
Therefore, arguing recursively, we get
\begin{equation}
  \label{est1}
  \forall\,\alpha\in[0,1),\quad\exists\,C_\alpha>0:\quad
  \norma{\bph_\lambda}_{L^\infty(0,t;\HH)\cap L^2(0,t; \WW)}\leq
  C_\alpha t^\alpha
  \qquad\forall\,t\in[0,T],
\end{equation}
where the constants $\{C_\alpha\}_{\alpha>0}$ are independent of $\lambda$.

\subsection{Estimates on the reaction-diffusion subsystem}
Let us now provide uniform estimates on the variables $(P_\lambda, \br_\lambda)$
by exploiting the parabolic structure of the subsystem \eqref{SYS:3_app}--\eqref{SYS:4_app}.
Indeed, thanks to \eqref{est1} and to the boundedness of $h$ and
$\bj_ \lambda$, one has
\[
  \norma{\SP^\lambda(\bph_\lambda, P_\lambda, \br_\lambda)}_{L^\infty(0,T;H)}+
  \norma{\SR^\lambda(\bph_\lambda, P_\lambda, \br_\lambda)}_{L^\infty(0,T;\HH)}
  \leq C,
\]
thanks to the assumption \eqref{ass:exweak:initialdata}--\eqref{P:minmax:ini} on the initial data,
the classical parabolic regularity theory yields
\begin{equation}\label{est2}
  \norma{P_\lambda}_{H^1(0,T; V^*)\cap L^2(0,T; V)\cap L^\infty(Q)}
  +\norma{\br_\lambda}_{H^1(0,T; \VV^*)\cap L^2(0,T; \VV)\cap \LL^\infty(Q)} \leq C.
\end{equation}

\subsection{Dual estimates on the phase variable}
\label{ssec:dual}
In this subsection, we perform a nonstandard Cahn--Hilliard estimate.
The presence of the source $\Sph^\lambda$
forces us to use an alternative strategy to handle it when multiplied by $\bmu_\lambda$. Indeed, as already mentioned, we cannot get an $L^2(0,T;\HH)$-bound for $\bmu_\lambda$
since we cannot control the spatial average of $\Psi^{(1)}_{\lambda,\bph}(\bph_\lambda)$ being $\bph_\lambda(0)=\0$.
We, therefore, exploit the behavior of the convex conjugate $(\Psi^{(1)}_\lambda)^*$.

Given an arbitrary $t\in(0,T)$,
we integrate equation \eqref{SYS:1_app} in time,
we test it by $\bmu_\lambda$, and sum with equation \eqref{SYS:2_app}
tested by $\bph_\lambda$ to obtain that
\begin{align}
	\notag
  &\int_0^t\int_{Q_s}|\nabla\bmu_\lambda|^2\,\ds+
  \int_{Q_t}|\nabla\bph_\lambda|^2
  +\int_{Q_t}\Psi^{(1)}_{\lambda,\bph}(\bph_\lambda)\cdot\bph_\lambda\\
  &\quad=\int_{0}^t\int_{Q_s}
  \Sph^\lambda(\bph_\lambda, P_\lambda, \br_\lambda)\cdot
  \bmu_\lambda\,\ds
  -\int_{Q_t}\Psi^{(2)}_{\bph}(\bph_\lambda)\cdot\bph_\lambda,
  \label{dual:est:1}
\end{align}
where $Q_s:=\Omega\times[0,s]$.
Thanks to the Young inequality and classical results of convex analysis,
on the left-hand side one has
\[
  \int_{Q_t}\Psi^{(1)}_{\lambda,\bph}(\bph_\lambda)\cdot\bph_\lambda
  =\int_{Q_t}\Psi_\lambda^{(1)}(\bph_\lambda) +
  \int_{Q_t}(\Psi_\lambda^{(1)})^*(\Psi^{(1)}_{\lambda,\bph}(\bph_\lambda)),
\]
where the convex conjugate is defined as usual as
\[
  (\Psi_\lambda^{(1)})^*:\erre^2\to(-\infty,+\infty], \qquad
  (\Psi_\lambda^{(1)})^*(\boldsymbol{z}):=\sup_{\rr\in\erre^2}
  \left\{\boldsymbol{z}\cdot\rr - \Psi_\lambda^{(1)}(\rr)\right\}, \quad
  \boldsymbol{z}\in\erre^2.
\]
On the right-hand side, one readily sees that the
second term in \eqref{dual:est:1} is
bounded uniformly in $\lambda$ thanks to assumption \ref{ass:pot}
and estimate \eqref{est1}. As for the first term,
exploiting in the order equation \eqref{SYS:2_app},
the form of $\Sph^\lambda$, the boundedness of
$h$ and $\bj_\lambda$, the bound \eqref{sep_PR},
and estimate \eqref{est1} once more, we deduce
\begin{align*}
  &\int_{0}^t\int_{Q_s}\Sph^\lambda(\bph_\lambda, P_\lambda, \br_\lambda)\cdot
  \bmu_\lambda\,\ds\\
  &\quad = \int_{0}^t\int_{Q_s}\Sph^\lambda(\bph_\lambda, P_\lambda, \br_\lambda)\cdot
  (-\Delta\bph_\lambda)\,\ds
  +\int_{0}^t\int_{Q_s}\Sph^\lambda(\bph_\lambda, P_\lambda, \br_\lambda)\cdot
  \Psi^{(1)}_{\lambda,\bph}(\bph_\lambda)\,\ds +C\\
  &\quad \leq C +
  \int_{0}^t\int_{Q_s}
  \begin{pmatrix}
  c_{1}P_\lambda R_{\lambda,1} - c_2J_\lambda^1(\bph_{\lambda})\\
  c_{3}P_\lambda R_{\lambda,2} - c_4J_\lambda^2(\bph_{\lambda})
  \end{pmatrix}
  \cdot
  \Psi^{(1)}_{\lambda,\bph}(\bph_\lambda)\,\ds.
\end{align*}
Collecting the above inequalities and rearranging the terms, we obtain that
\begin{align*}
  &\int_0^t\int_{Q_s}|\nabla\bmu_\lambda|^2\,\ds+
  \int_{Q_t}|\nabla\bph_\lambda|^2
  +\int_{Q_t}\Psi_\lambda^{(1)}(\bph_\lambda) +
  \int_{Q_t}(\Psi_\lambda^{(1)})^*(\Psi^{(1)}_{\lambda,\bph}(\bph_\lambda))\\
  &\qquad+ \int_{0}^t\int_{Q_s}
 \begin{pmatrix}
  c_{2}J_\lambda^1(\bph_\lambda)\\
  c_{4}J_\lambda^2 (\bph_\lambda)
  \end{pmatrix}
  \cdot
  \Psi^{(1)}_{\lambda,\bph}(\bph_\lambda)\,\ds\\
  &\quad \leq C + \int_{0}^t\int_{Q_s}
  \begin{pmatrix}
  c_{1}P_\lambda R_{\lambda,1}\\
  c_{3}P_\lambda R_{\lambda,2}
  \end{pmatrix}
  \cdot
  \Psi^{(1)}_{\lambda,\bph}(\bph_\lambda)\,\ds.
\end{align*}
At this point, thanks to \eqref{sep_PR}, there exists $\eps_0>0$
independent of $\lambda$ such that
\[
  \eps_0
  \begin{pmatrix}
  c_{1}P_\lambda R_{\lambda,1}\\
  c_{3}P_\lambda R_{\lambda,2}
  \end{pmatrix}
  \in \simcl \qquad\forall\,\lambda\in(0,1).
\]
Hence, using the Young inequality and the properties of the Moreau--Yosida
regularization, on the right-hand side we have
\begin{align*}
  &\int_{0}^t\int_{Q_s}
  \begin{pmatrix}
  c_{1}P_\lambda R_{\lambda,1}\\
  c_{3}P_\lambda R_{\lambda,2}
  \end{pmatrix}
  \cdot
  \Psi^{(1)}_{\lambda,\bph}(\bph_\lambda)\,\ds\\
  &\quad =\frac1{\eps_0}
  \int_{0}^t\int_{Q_s}
  \eps_0
  \begin{pmatrix}
  c_{1}P_\lambda R_{\lambda,1}\\
  c_{3}P_\lambda R_{\lambda,2}
  \end{pmatrix}
  \cdot
  \Psi^{(1)}_{\lambda,\bph}(\bph_\lambda)\,\ds\\
  &\quad \leq\frac1{\eps_0}
  \int_{0}^t\int_{Q_s}
  \left[
  \Psi_\lambda^{(1)}\left(
  \eps_0
  \begin{pmatrix}
  c_{1}P_\lambda R_{\lambda,1}\\
  c_{3}P_\lambda R_{\lambda,2}
  \end{pmatrix}
  \right)
  +(\Psi_\lambda^{(1)})^*
  (\Psi^{(1)}_{\lambda,\bph}(\bph_\lambda))
  \right]\,\ds\\
  &\quad \leq\frac1{\eps_0}T|Q|\max_{\rr\in\simcl}\Psi^{(1)}(\rr)
  +\frac1{\eps_0}\int_{0}^t\int_{Q_s}(\Psi_\lambda^{(1)})^*
 (\Psi^{(1)}_{\lambda,\bph}(\bph_\lambda))\,\ds.
\end{align*}
On the left-hand side, recalling that
$\Psi_{\lambda,\bph}^{(1)}=\Psi_\bph^{(1)}\circ\bj_\lambda=
(\nabla\Psi^{(1)})\circ\bj_\lambda$,
we have in turn that
\begin{align*}
  \int_{0}^t\int_{Q_s}
  \begin{pmatrix}
  c_2J_\lambda^1(\bph_{\lambda})\\
  c_4J_\lambda^2(\bph_{\lambda})
  \end{pmatrix}
  \cdot
  \Psi^{(1)}_{\lambda,\bph}(\bph_\lambda)
  &=\int_{0}^t\int_{Q_s}
  \begin{pmatrix}
  c_2J_\lambda^1(\bph_{\lambda})\\
  c_4J_\lambda^2(\bph_{\lambda})
  \end{pmatrix}
  \cdot
  \Psi^{(1)}_{\bph}(\bj_\lambda(\bph_\lambda))\\
  &=\sum_{i=1,2}c_{2i}\int_{0}^t\int_{Q_s}J_\lambda^i(\bph_{\lambda})
  (D_i\Psi^{(1)})(\bj_\lambda(\bph_\lambda)).
\end{align*}
To handle this expression,
for every $x,y\in(0,1)$, we introduce the sections
\begin{alignat*}{2}
  &f_{1}^y:[0,1-y]\to[0,+\infty), \qquad
  &&f_{1}^y(r):=\Psi^{(1)}(r,y), \quad r\in\erre,\\
  &f_{2}^x:[0,1-x]\to[0,+\infty), \qquad
  &&f_{2}^x(r):=\Psi^{(1)}(x,r), \quad r\in\erre.
\end{alignat*}
Clearly, for all $x,y\in(0,1)$,
the functions $f_1^y$ and $f_2^x$
are convex, differentiable in $(0,1-y)$ and $(0,1-x)$, respectively,  and satisfy
\begin{align*}
  &(f_1^y)'(r)=(D_1\Psi^{(1)})(r,y) \qquad\forall\,r\in(0,1-y),\\
  &(f_2^x)'(r)=(D_2\Psi^{(1)})(x,r) \qquad\forall\,r\in(0,1-x).
\end{align*}
Moreover, their convex conjugates satisfy
\begin{alignat*}{2}
  (f_1^y)^*(z) &= \sup_{r\in[0,1-y]}\left\{rz-\Psi^{(1)}(r,y)\right\}
  \geq - \Psi(0,y)\geq - \max_{\rr\in\simcl}\Psi^{(1)}(\rr) \qquad&\forall\,z\in\erre,\\
  (f_2^x)^*(z) &= \sup_{r\in[0,1-x]}\left\{rz-\Psi^{(1)}(x,r)\right\}
  \geq - \Psi(x,0)\geq - \max_{\rr\in\simcl}\Psi^{(1)}(\rr)
  \qquad&\forall\,z\in\erre.
\end{alignat*}
Therefore,
owing to classical results of convex analysis, we infer that
\begin{align*}
  (D_1\Psi^{(1)})(x,y)x &=
  (f_1^y)'(x)x = f_1^y(x) + (f_1^y)^*((f_1^y)'(x)) \geq - \max_{\rr\in\simcl}\Psi^{(1)}(\rr),\\
  (D_2\Psi^{(1)})(x,y)y &=
  (f_2^x)'(y)y = f_2^x(y) +
  (f_2^x)^*((f_2^x)'(y)) \geq - \max_{\rr\in\simcl}\Psi^{(1)}(\rr),
\end{align*}
for every $(x,y)\in\simap$.
Taking the previous considerations into account
and using the fact that $\bj_\lambda(\bph_\lambda)\in\simap$
almost everywhere in $Q$, we deduce that
\begin{align*}
  \int_{0}^t\int_{Q_s}
  \begin{pmatrix}
  c_2J_\lambda^1(\bph_{\lambda})\\
  c_4J_\lambda^2(\bph_{\lambda})
  \end{pmatrix}
  \cdot
  \Psi^{(1)}_{\lambda,\bph}(\bph_\lambda)
  &=\sum_{i=1,2}c_{2i}\int_{0}^t\int_{Q_s}J_\lambda^i(\bph_{\lambda})
  (D_i\Psi^{(1)})(\bj_\lambda(\bph_\lambda))\\
  &\geq - T|Q|(c_2+c_4)\max_{\rr\in\simcl}\Psi^{(1)}(\rr).
\end{align*}
Summing up, rearranging all the terms, we have
\begin{align*}
  &\int_0^t\int_{Q_s}|\nabla\bmu_\lambda|^2\,\ds+
  \int_{Q_t}|\nabla\bph_\lambda|^2
  +\int_{Q_t}\Psi_\lambda^{(1)}(\bph_\lambda) +
  \int_{Q_t}(\Psi_\lambda^{(1)})^*(\Psi^{(1)}_{\lambda,\bph}(\bph_\lambda))\\
  &\quad \leq C + C\int_{0}^t\int_{Q_s}(\Psi_\lambda^{(1)})^*
 (\Psi^{(1)}_{\lambda,\bph}(\bph_\lambda))\,\ds.
\end{align*}
Now, noting that $(\Psi^{(1)}_\lambda)^*\geq-\max_{\rr\in\simcl}\Psi^{(1)}(\rr)$ as well,
we have that
\begin{align*}
\int_{\Omega}|(\Psi_\lambda^{(1)})^*
 (\Psi^{(1)}_{\lambda,\bph}(\bph_\lambda))|&=
 \int_{\{(\Psi_\lambda^{(1)})^*
 (\Psi^{(1)}_{\lambda,\bph}(\bph_\lambda))\geq0\}}
 (\Psi_\lambda^{(1)})^*
 (\Psi^{(1)}_{\lambda,\bph}(\bph_\lambda))\\
 &\quad -\int_{\{(\Psi_\lambda^{(1)})^*
 (\Psi^{(1)}_{\lambda,\bph}(\bph_\lambda))<0\}}
 (\Psi_\lambda^{(1)})^*
 (\Psi^{(1)}_{\lambda,\bph}(\bph_\lambda))\\
 &\leq|\Omega|\max_{\rr\in\simcl}\Psi^{(1)}(\rr) +
 \int_{\{(\Psi_\lambda^{(1)})^*
 (\Psi^{(1)}_{\lambda,\bph}(\bph_\lambda))\geq0\}}
 (\Psi_\lambda^{(1)})^*
 (\Psi^{(1)}_{\lambda,\bph}(\bph_\lambda))
\end{align*}
where
\begin{align*}
  &\int_{\{(\Psi_\lambda^{(1)})^*
 (\Psi^{(1)}_{\lambda,\bph}(\bph_\lambda))\geq0\}}
 (\Psi_\lambda^{(1)})^*
 (\Psi^{(1)}_{\lambda,\bph}(\bph_\lambda))\\
 &\quad =\int_\Omega
 (\Psi_\lambda^{(1)})^*
 (\Psi^{(1)}_{\lambda,\bph}(\bph_\lambda))
 - \int_{\{(\Psi_\lambda^{(1)})^*
 (\Psi^{(1)}_{\lambda,\bph}(\bph_\lambda))<0\}}
 (\Psi_\lambda^{(1)})^*
 (\Psi^{(1)}_{\lambda,\bph}(\bph_\lambda))\\
 &\quad \leq |\Omega|\max_{\rr\in\simcl}\Psi^{(1)}(\rr) +\int_\Omega
 (\Psi_\lambda^{(1)})^*
 (\Psi^{(1)}_{\lambda,\bph}(\bph_\lambda)),
\end{align*}
from which we deduce
\[
  \int_{\Omega}|(\Psi_\lambda^{(1)})^*
 (\Psi^{(1)}_{\lambda,\bph}(\bph_\lambda))|
 \leq2|\Omega|\max_{\rr\in\simcl}\Psi^{(1)}(\rr)
 +\int_\Omega
 (\Psi_\lambda^{(1)})^*
 (\Psi^{(1)}_{\lambda,\bph}(\bph_\lambda)).
\]
Taking this into account we have also that
\begin{align*}
  &\int_0^t\int_{Q_s}|\nabla\bmu_\lambda|^2\,\ds+
  \int_{Q_t}|\nabla\bph_\lambda|^2
  +\int_{Q_t}\Psi_\lambda^{(1)}(\bph_\lambda) +
  \int_{Q_t}|(\Psi_\lambda^{(1)})^*(\Psi^{(1)}_{\lambda,\bph}(\bph_\lambda))|\\
  &\quad \leq C + C\int_{0}^t\int_{Q_s}|(\Psi_\lambda^{(1)})^*
 (\Psi^{(1)}_{\lambda,\bph}(\bph_\lambda))|\,\ds.
\end{align*}
The Gronwall lemma readily implies that
there exists a constant $C>0$, independent of $\lambda$, such that
\begin{equation}
  \label{est2:2}
  \norma{\Psi_\lambda^{(1)}(\bph_\lambda)}_{L^1(Q)} +
  {\norma{(\Psi_\lambda^{(1)})^*(\Psi^{(1)}_{\lambda,\bph}(\bph_\lambda))}_{L^1(Q)}} \leq C.
\end{equation}
Furthermore, recalling the definition of Yosida approximation and using
classical results of convex analysis, we get
\begin{align*}
  \Psi_\lambda^{(1)}(\bph_\lambda) +
  (\Psi_\lambda^{(1)})^*(\Psi_{\lambda,\bph}^{(1)}(\bph_\lambda))
  &=\Psi_{\lambda,\bph}^{(1)}(\bph_\lambda)\cdot\bph_\lambda\\
  &=\Psi_{\lambda,\bph}^{(1)}(\bph_\lambda)\cdot
  (\bph_\lambda -\bj_\lambda(\bph_\lambda))
  +\Psi_{\lambda,\bph}^{(1)}(\bph_\lambda)\cdot\bj_\lambda(\bph_\lambda)\\
  &=\Psi_{\lambda,\bph}^{(1)}(\bph_\lambda)\cdot
  (\bph_\lambda -\bj_\lambda(\bph_\lambda))
  +\Psi_{\bph}^{(1)}(\bj_\lambda(\bph_\lambda))\cdot\bj_\lambda(\bph_\lambda)\\
  &=\lambda\norma{\Psi_{\lambda,\bph}^{(1)}(\bph_\lambda)}^2+
  \Psi^{(1)}(\bj_\lambda(\bph_\lambda))
  +(\Psi^{(1)})^*(\Psi_{\bph}(\bj_\lambda(\bph_\lambda)))\\
  &\geq\lambda\norma{\Psi_{\lambda,\bph}^{(1)}(\bph_\lambda)}^2
  -\max_{\rr\in\simcl}\Psi^{(1)}(\rr),
\end{align*}
which, thanks to \eqref{est2:2}, yields
\begin{equation}\label{est3}
\lambda^{1/2}\norma{\Psi_{\lambda,\bph}(\bph_\lambda)}_{L^2(0,T;\HH)}\leq C.
\end{equation}

\subsection{Energy estimate}\label{sub:energy}
Let us now perform the standard energy estimate on the Cahn--Hilliard system.
Namely, we test \eqref{SYS:1_app} by $\bmu_\lambda$, \eqref{SYS:2_app} by $\dt\bph_\lambda$,
add the resulting equalities, and integrate by parts to find that
\[
	\frac12\int_\Omega|\nabla\bph_\lambda(t)|^2
	+\int_\Omega \Psi_\lambda(\bph_\lambda(t))
	+ \int_{Q_t}|\nabla\bmu_\lambda|^2
	= \int_\Omega \Psi_\lambda(\bph_0)
	+\int_{Q_t} \Sph^\lambda
	(\bph_\lambda, P_\lambda, \br_\lambda) \cdot \bmu_\lambda.
\]
On the left-hand side, thanks to the Lipschitz continuity of $\Psi_\bph^{(2)}$
and the estimate \eqref{est1}, one has that
\[
  \int_\Omega \Psi_\lambda(\bph_\lambda(t))=
  \int_\Omega \Psi_\lambda^{(1)}(\bph_\lambda(t))+
  \int_\Omega \Psi^{(2)}(\bph_\lambda(t))\geq
  \int_\Omega \Psi_\lambda^{(1)}(\bph_\lambda(t)) -C.
\]
On the right-hand side, the initial value can be bounded
by assumption \ref{ass:pot} and \eqref{ass:exweak:initialdata} as
\begin{align*}
  \int_\Omega \Psi_\lambda(\bph_0)
  = \int_\Omega \Psi_\lambda(\0)
  \leq\int_\Omega \Psi(\0)\leq C.
\end{align*}
Hence, we get
\begin{equation}
	\frac12\int_\Omega|\nabla\bph_\lambda(t)|^2
	+\int_\Omega \Psi_\lambda^{(1)}(\bph_\lambda(t))
	+ \int_{Q_t}|\nabla\bmu_\lambda|^2
	\leq C
	+\int_{Q_t} \Sph^\lambda(\bph_\lambda, P_\lambda, \br_\lambda) \cdot \bmu_\lambda.
	\label{en:est}
\end{equation}
Recalling \eqref{def:sorgenti_app}--\eqref{def:Sp_app}
we have, using also \eqref{sep_PR},
\begin{align*}
  \int_{Q_t} \Sph^\lambda(\bph_\lambda, P_\lambda, \br_\lambda) \cdot \bmu_\lambda
  =\int_{Q_t}
  \begin{pmatrix}
  c_{1}P_\lambda R_{\lambda,1}\\
  c_{3}P_\lambda R_{\lambda,2}
  \end{pmatrix}
  \cdot\bmu_\lambda
  -\int_{Q_t}
  \begin{pmatrix}
  c_{2}J_\lambda^1(\bph_{\lambda})\\
  c_{4}J_\lambda^2(\bph_{\lambda})
  \end{pmatrix}
  \cdot\bmu_\lambda.
\end{align*}
By the boundedness of $h$ and $\bj_\lambda$, equation \eqref{SYS:2_app},
the Lipschitz continuity of $\Psi_{\bph}^{(2)}$, and estimate \eqref{est1}, we deduce
\begin{align*}
  \int_{Q_t}
  \begin{pmatrix}
  c_{1}P_\lambda R_{\lambda,1}\\
  c_{3}P_\lambda R_{\lambda,2}
  \end{pmatrix}
  \cdot\bmu_\lambda
  \leq
  C
  +\int_{Q_t}
  \begin{pmatrix}
 c_{1}P_\lambda R_{\lambda,1}\\
 c_{3}P_\lambda R_{\lambda,2}
  \end{pmatrix}
  \cdot\Psi_{\lambda,\bph}^{(1)}(\bph_\lambda)
\end{align*}
and
\begin{align*}
  -\int_{Q_t}
  \begin{pmatrix}
  c_{2}J_\lambda^1(\bph_{\lambda})\\
  c_{4}J_\lambda^2(\bph_{\lambda})
  \end{pmatrix}
  \cdot\bmu_\lambda
  \leq
  C
  -\int_{Q_t}
  \begin{pmatrix}
  c_{2}J_\lambda^1(\bph_{\lambda})\\
  c_{4}J_\lambda^2(\bph_{\lambda})
  \end{pmatrix}
  \cdot\Psi_{\lambda,\bph}^{(1)}(\bph_\lambda).
\end{align*}
Note that, thanks to the fact that $h$ takes values in $[0,1]$,
there exists a constant $\eps_0>0$, independent of $\lambda$, such that
\[
  \eps_0
  \begin{pmatrix}
  c_{1}P_\lambda R_{\lambda,1}\\
  c_{3}P_\lambda R_{\lambda,2}
  \end{pmatrix}
  \in\simcl
  \qquad\forall\,\lambda\in(0,1).
\]
Consequently, exploiting the Young inequality
and the dual estimate \eqref{est2:2}, we get
\begin{align*}
  &\int_{Q_t}
  \begin{pmatrix}
  c_{1}P_\lambda R_{\lambda,1}\\
  c_{3}P_\lambda R_{\lambda,2}
  \end{pmatrix}
  \cdot\Psi_{\lambda,\bph}^{(1)}(\bph_\lambda)\\
  &\quad =
  \frac1{\eps_0}\int_{Q_t}
  \eps_0
  \begin{pmatrix}
  c_{1}P_\lambda R_{\lambda,1}\\
  c_{3}P_\lambda R_{\lambda,2}
  \end{pmatrix}
  \cdot\Psi_{\lambda,\bph}^{(1)}(\bph_\lambda)\\
  &\quad \leq\frac1{\eps_0}
  \int_{Q_t}
  \Psi_\lambda^{(1)}\left(
  \eps_0
  \begin{pmatrix}
  c_{1}P_\lambda R_{\lambda,1}\\
  c_{3}P_\lambda R_{\lambda,2}
  \end{pmatrix}
  \right)
  +\frac1{\eps_0}
  \int_{Q_t}
  (\Psi_\lambda^{(1)})^*(\Psi_{\lambda,\bph}^{(1)}(\bph_\lambda))\\
  &\quad \leq\frac1{\eps_0}
  \int_{Q_t}
  \Psi^{(1)}\left(
  \eps_0
  \begin{pmatrix}
  c_{1}P_\lambda R_{\lambda,1}\\
  c_{3}P_\lambda R_{\lambda,2}
  \end{pmatrix}
  \right)
  +\frac1{\eps_0}
  \int_{Q_t}
  (\Psi_\lambda^{(1)})^*(\Psi_{\lambda,\bph}^{(1)}(\bph_\lambda))\\
  &\quad \leq \frac1{\eps_0}|Q|\max_{\rr\in\simcl}\Psi^{(1)}(\rr)
  +\frac{C}{\eps_0}.
\end{align*}
Taking the above chain of inequalities into account, we infer from \eqref{en:est} that
\[
\frac12\int_\Omega|\nabla\bph_\lambda(t)|^2
	+\int_\Omega \Psi_\lambda^{(1)}(\bph_\lambda(t))
	+\int_{Q_t}|\nabla\bmu_\lambda|^2
	+\int_{Q_t}
  \begin{pmatrix}
  c_{2}J_\lambda^1(\bph_{\lambda})\\
  c_{4}J_\lambda^2(\bph_{\lambda})
  \end{pmatrix}
  \cdot\Psi_{\lambda,\bph}^{(1)}(\bph_\lambda)
	\leq C.
\]
The last term on the left-hand side can be treated
exactly in the same way as in Subsection~\ref{ssec:dual}
(see \eqref{est3}). This gives
\[
  \int_{Q_t}
  \begin{pmatrix}
  c_{2}J_\lambda^1(\bph_{\lambda})\\
  c_{4}J_\lambda^2(\bph_{\lambda})
  \end{pmatrix}
  \cdot\Psi_{\lambda,\bph}^{(1)}(\bph_\lambda)
  \geq - |Q|(c_2+c_4)\max_{\rr\in\simcl}\Psi^{(1)}(\rr).
\]
Therefore, we eventually obtain that
\[
\frac12\int_\Omega|\nabla\bph_\lambda(t)|^2
	+\int_\Omega \Psi_\lambda^{(1)}(\bph_\lambda(t))
	+\int_{Q_t}|\nabla\bmu_\lambda|^2
	\leq C \qquad\forall\,t\in[0,T],
\]
that leads us to
\begin{equation}
 \label{est4}
 \norma{\bph_\lambda}_{L^\infty(0,T;\VV)}
 + \norma{\Psi_\lambda^{(1)}(\bph_\lambda)}_{L^\infty(0,T; \LL^1(\Omega))}
 + \norma{\nabla\bmu_\lambda}_{L^2(0,T;\HH)} \leq C.
\end{equation}
Next, comparison in \eqref{SYS:2_app}, estimates \eqref{est1} and \eqref{est3} yield
\begin{equation}
 \label{est5}
 \norma{\bph_\lambda}_{H^1(0,T;\VV^*)} \leq C.
\end{equation}
Furthermore, using the properties of the Moreau--Yosida regularization, we have
\[
  \Psi^{(1)}_\lambda(\rr) =\frac{\norma{\rr - \bj_\lambda(\rr)}^2}{2\lambda}
  + \Psi^{(1)}(\bj_\lambda(\rr))
  \geq\frac{\norma{\rr - \bj_\lambda(\rr)}^2}{2\lambda}
  =\frac\lambda2\norma{\Psi_{\lambda,\bph}^{(1)}(\rr)}^2
  \qquad\forall\,\rr\in\erre^2.
\]
Thus, it follows from \eqref{est4} that
\begin{equation}\label{est6}
  \lambda^{1/2}\norma{\Psi_{\lambda,\bph}(\bph_\lambda)}_{L^\infty(0,T;\HH)}\leq C
\end{equation}
which improves \eqref{est3}.

\subsection{Mean value properties of the phase field variable}\label{SEC:MEAN}
In this subsection, we investigate the behavior of the mean value of
$\bph_\lambda$. Recall that $\bph_\lambda(0)=\0$, so
in particular $(\ph_{\lambda,i})_\Omega(0)=0$ for $i=1,2$.
Here we show that $(\ph_{\lambda,i})_\Omega$ becomes positive for positive times, thanks to the presence of the source. In other words, the formation of complexes is instantaneous.
In particular, the pair $((\ph_{\lambda,1})_\Omega, (\ph_{\lambda,2})_\Omega)$ remains confined strictly inside the simplex $\simap$
as soon as the evolution starts. These facts will be essential in order to find an integral bound on
the chemical potential in the next section and, eventually, pass to the limit as $\lambda$ vanishes.

This strategy is possible due to an ODE argument which can be derived by considering equation \eqref{SYS:1_app}.
However, the challenge relies upon the fact that this analysis has to be performed at the level of the $\lambda$-approximating problem.
In fact, as $\Sph^\lambda(\bph_\lambda, P_\lambda, \br_\lambda)$ contains truncations (cf. \eqref{def:sorgenti_app}), some technical difficulties arise since the mean value and the truncation does not commute.
The key idea to overcome this difficulty relies first on adding and subtracting a suitable term
that allows to solve the ODE and, second, on noticing that some of the forcing terms
are dominant with respect to others as soon as $\lambda$ is chosen small enough.

We begin to observe that assumption \eqref{max:ini:Linf} ensures that
\begin{equation}\label{alpha0}
  \alpha_0:=(P_0)_\Omega \in (0,1).
\end{equation}
In addition, \eqref{max:ini:Linf} and \eqref{smallness}
imply that $P_0,R_{0,1}, R_{0,2}\geq 2c_*$ almost everywhere in $\Omega$, and that
$\alpha_0>c_*$ and $\frac12(1-\alpha_0)>c_*$.
It holds in particular that $1>\min\{\alpha_0-c_*, 1-\alpha_0-2c_*\}>0$.

Our results are collected in the following lemma.
\begin{lemma}
  \label{lem2}
  The following inequalities hold
  \begin{align*}
  &(\ph_{\lambda,1}(t))_\Omega + (\ph_{\lambda,2}(t))_\Omega \leq
  {\min\{\alpha_0-c_*, 1-\alpha_0-2c_*\}}<1 \qquad\forall\,t\in[0,T],\\
  &(\ph_{\lambda,i}(t))_\Omega \leq
  \frac{1-\alpha_0}2 -c_*<1 \quad\forall\,t\in[0,T],\qquad\text{for $i=1,2$},
  \end{align*}
  where $c_*$ is the threshold constant introduced in Lemma \ref{lem1}.
  Moreover, there exist $\lambda_0\in(0,1)$ and $c_0>0$, independent of $\lambda$,
  such that, for every $\lambda\in(0,\lambda_0)$ and $i=1,2$, it holds
  \[
  c_0(1 - e^{-c_{2i}t}) \leq
  (\ph_{\lambda,i}(t))_\Omega \leq c_{2i-1}t
  \qquad\forall\,t\in[0,T].
  \]
\end{lemma}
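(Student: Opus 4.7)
The plan is to derive mass balance identities for the approximating system and then exploit the ODE satisfied by the spatial average of each component of $\bph_\lambda$. A preliminary observation is that the structural relations \eqref{sources:struct} are preserved by the approximated sources \eqref{def:sorgenti_app}--\eqref{def:Sp_app}, since $h$ and $\bj_\lambda$ act in the same fashion on every equation: componentwise $\Sph^\lambda + \SR^\lambda = \0$ and $\Sph^\lambda \cdot \1 + \SP^\lambda = 0$. Integrating \eqref{SYS:1_app}, \eqref{SYS:3_app} and \eqref{SYS:4_app} over $\Omega$ and using the Neumann boundary conditions \eqref{SYS:5_app} together with the initial conditions \eqref{SYS:7_app} then yields the mass balances
\[
(\ph_{\lambda,i})_\Omega(t) + (R_{\lambda,i})_\Omega(t) = \frac{1-\alpha_0}{2}, \qquad
(\ph_{\lambda,1})_\Omega(t) + (\ph_{\lambda,2})_\Omega(t) + (P_\lambda)_\Omega(t) = \alpha_0,
\]
for every $t \in [0,T]$ and $i=1,2$. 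Combining these identities with the uniform lower bounds $P_\lambda, R_{\lambda,i} \geq c_*$ supplied by Lemma~\ref{lem1} gives at once the two claimed upper bounds on $(\ph_{\lambda,1})_\Omega + (\ph_{\lambda,2})_\Omega$ and on each $(\ph_{\lambda,i})_\Omega$, and the smallness conditions \eqref{max:ini:Linf}--\eqref{smallness} ensure that the right-hand sides are strictly positive and strictly smaller than $1$.

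For the two-sided estimate on each component, integrating \eqref{SYS:1_app} componentwise over $\Omega$ and using the Neumann condition on $\bmu_\lambda$ produces the scalar ODE
\[
|\Omega|\frac{d}{dt}(\ph_{\lambda,i})_\Omega(t) = c_{2i-1}\int_\Omega P_\lambda R_{\lambda,i} - c_{2i}\int_\Omega J_\lambda^i(\bph_\lambda) \qquad \text{a.e.\ in } (0,T),
\]
subject to $(\ph_{\lambda,i})_\Omega(0) = 0$. The upper bound $(\ph_{\lambda,i})_\Omega(t) \leq c_{2i-1}t$ follows by direct integration, since $P_\lambda R_{\lambda,i} \leq 1$ by Lemma~\ref{lem1} and $J_\lambda^i(\bph_\lambda) \geq 0$ as $\bj_\lambda$ takes values in $\simap$.

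The lower bound is the main obstacle, and it is where the smallness of $\lambda$ enters decisively. Using the resolvent identity $\bj_\lambda(\bph_\lambda) = \bph_\lambda - \lambda \Psi^{(1)}_{\lambda,\bph}(\bph_\lambda)$, the ODE above can be recast as
\[
|\Omega|\frac{d}{dt}(\ph_{\lambda,i})_\Omega + c_{2i}|\Omega|(\ph_{\lambda,i})_\Omega = c_{2i-1}\int_\Omega P_\lambda R_{\lambda,i} + c_{2i}\lambda \int_\Omega \left(D_i \Psi^{(1)}_\lambda\right)(\bph_\lambda).
\]
The first term on the right-hand side is bounded below by $c_{2i-1}c_*^2|\Omega|$ thanks to Lemma~\ref{lem1}, whereas the second one has to be controlled in spite of the fact that $\Psi^{(1)}_{\lambda,\bph}(\bph_\lambda)$ is generally not bounded uniformly in $\lambda$. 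The crucial input is the energy estimate \eqref{est6}, which, combined with the Lipschitz continuity of $\Psi^{(2)}_\bph$ and the $L^\infty(0,T;\VV)$-bound coming from \eqref{est4}, yields $\lambda\,\|\Psi^{(1)}_{\lambda,\bph}(\bph_\lambda)\|_{L^\infty(0,T;\HH)} \leq C\lambda^{1/2}$; hence the Cauchy--Schwarz inequality gives
\[
\left| c_{2i}\lambda\int_\Omega \left(D_i\Psi^{(1)}_\lambda\right)(\bph_\lambda)\right| \leq c_{2i}\, C |\Omega|^{1/2}\lambda^{1/2}.
\]
Choosing $\lambda_0 \in (0,1)$ sufficiently small so that the latter quantity is less than $\tfrac12 c_{2i-1} c_*^2|\Omega|$ uniformly in $i \in \{1,2\}$ and $\lambda \in (0,\lambda_0)$, we obtain the differential inequality
\[
\frac{d}{dt}(\ph_{\lambda,i})_\Omega + c_{2i}(\ph_{\lambda,i})_\Omega \geq \frac{c_{2i-1}c_*^2}{2} \qquad \text{a.e.\ in } (0,T),
\]
and a standard linear ODE comparison with the explicit solution of the corresponding equality, starting from $0$, furnishes the lower bound $(\ph_{\lambda,i})_\Omega(t) \geq c_0(1 - e^{-c_{2i}t})$ with, e.g., $c_0 := \min\{c_1 c_*^2/(2c_2),\, c_3 c_*^2/(2c_4)\}$, independent of $\lambda \in (0,\lambda_0)$.
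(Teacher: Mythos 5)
Your proof is correct and follows essentially the same route as the paper's: you derive the same mass-balance identities to obtain the upper bounds, and for the lower bound you use the same resolvent identity $J_\lambda^i(\bph_\lambda)=\ph_{\lambda,i}-\lambda(D_i\Psi^{(1)}_\lambda)(\bph_\lambda)$ together with the uniform bound \eqref{est6} to absorb the $\lambda$-error for $\lambda$ small. The only cosmetic difference is that you pass to a differential inequality and invoke ODE comparison, while the paper applies the variation-of-constants (Duhamel) formula directly before estimating; these are equivalent, and your explicit choice of $c_0$ matches what Duhamel would give.
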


\begin{proof}[Proof of Lemma~\ref{lem2}]
We first recall that $(\bph_{\lambda}(0))_\Omega=\0$
and we start by showing first the upper bounds for the mean values of $\bph_\lambda$.
Testing \eqref{SYS:1_app} by $\1$, \eqref{SYS:3_app} by $1$,
and adding the resulting identites together, we get
\begin{align*}
	\dt \big( (\ph_{\lambda,1})_\Omega
	+(\ph_{\lambda,2})_\Omega
	+ (P_\lambda)_\Omega \big) = 0.
\end{align*}
Integrating in time, recalling \eqref{sep_PR} and the initial conditions \eqref{SYS:7_app}, entails
\begin{align*}
	(\ph_{\lambda,1}(t))_\Omega
	+(\ph_{\lambda,2}(t))_\Omega
	= (P_0)_\Omega -   (P_\lambda(t))_\Omega \leq \alpha_0 - c_*
	\qquad\forall\,t\in[0,T].
\end{align*}
Arguing similarly, namely, testing \eqref{SYS:1_app} by $\1$, \eqref{SYS:4_app} by $\1$, we deduce
\begin{align*}
	\dt \big( (\ph_{\lambda,1})_\Omega
	+(\ph_{\lambda,2})_\Omega
	+ (R_{\lambda,1})_\Omega
	+ (R_{\lambda,2})_\Omega \big) = 0,
\end{align*}
from which, thanks to \eqref{SYS:7_app} and \eqref{sep_PR}, we obtain
\begin{align*}
	(\ph_{\lambda,1}(t))_\Omega
	+(\ph_{\lambda,2}(t))_\Omega
	&= 1-(P_0)_\Omega -   (R_{\lambda,1}(t))_\Omega
	-(R_{\lambda,2}(t))_\Omega\\
	&\leq 1-\alpha_0 - {2c_*}
	\qquad\forall\,t\in[0,T].
\end{align*}
Testing then \eqref{SYS:1_app} and \eqref{SYS:4_app} by $(1,0)$
(and by (0,1), respectively), we find
\begin{align*}
	\dt \big( (\ph_{\lambda,i})_\Omega
	+ (R_{\lambda,i})_\Omega \big) = 0, \qquad i=1,2,
\end{align*}
so that
\begin{align*}
	(\ph_{\lambda,i}(t))_\Omega
	= \frac{1-(P_0)_\Omega}2 -   (R_{\lambda,i}(t))_\Omega
	 \leq \frac{1-\alpha_0}2 - c_*
	\qquad\forall\,t\in[0,T].
\end{align*}
Therefore, by combining the above inequalities, we obtain the bounds from above.

Let us focus now on the bounds from below.
From the two components of equation \eqref{SYS:1_app},
testing by the constant $1$ we can derive
a Cauchy problem for the mean value of
$\ph_{\lambda,i}$, for $i=1,2$.
More precisely, setting
\[
y_{\lambda,i}:= (\ph_{\lambda,i})_\Omega, \qquad
f_{\lambda,i}:=(P_\lambda R_{\lambda,i})_\Omega,
\qquad i=1,2,
\]
where we also used \eqref{sep_PR},
we deduce the Cauchy problem
\begin{align}
\label{eq_mean}
\begin{cases}
	y_{\lambda,i}' + c_{2i}(J_\lambda^i(\bph_{\lambda}))_\Omega =
	c_{2i-1}f_{\lambda,i}
	\\
	y_{\lambda,i}(0)=0
\end{cases}
\qquad i=1,2.
\end{align}
Exploiting the nonnegativity of $J_\lambda^i$, from \eqref{eq_mean}
one readily gets that
\[
  y_{\lambda,i}(t) \leq c_{2i-1}\int_0^tf_{\lambda,i}(s)\,\ds\leq c_{2i-1}t.
\]
On the other hand, the former Cauchy problem can be rewritten as follows
\begin{align*}
\begin{cases}
	y_{\lambda,i}' + c_{2i}y_{\lambda,i} =
	c_{2i-1}f_{\lambda,i} +
	c_{2i}(\ph_{\lambda,i}-J_\lambda^i(\bph_{\lambda}))_\Omega
	\\
	y_{\lambda,i}(0)=0
\end{cases}
\qquad i=1,2.
\end{align*}
Let $i\in\{1,2\}$ be fixed.
Then the variation of constants formula yields,
together with \eqref{sep_PR} and the H\"older inequality, the following
\begin{align*}
  y_{\lambda,i}(t)&\geq
  c_{2i-1}\int_0^te^{-c_{2i}(t-s)}f_{\lambda,i}(s)\,\ds
  +c_{2i}\int_0^te^{-c_{2i}(t-s)}
  (\ph_{\lambda,i}(s)-J_\lambda^i(\bph_{\lambda}(s)))_\Omega\,\ds\\
  &\geq c_{2i-1}c_*^2\int_0^te^{-c_{2i}(t-s)}\,\ds
  -\frac{c_{2i}}{|\Omega|^{1/2}}\int_0^te^{-c_{2i}(t-s)}
  \norma{\ph_{\lambda,i}(s)-J_\lambda^i(\bph_{\lambda}(s))}\,\ds.
\end{align*}
Note that, recalling the definition of Yosida approximation
and classical results of convex analysis, we have
\[
  \norma{\ph_{\lambda,i}-J_\lambda^i(\bph_{\lambda})}
  \leq\norma{\bph_{\lambda}-\bj_\lambda(\bph_{\lambda})}
  =\lambda\norma{\Psi_{\lambda,\bph}^{(1)}(\bph_\lambda)},
  \quad
  i=1,2.
\]
Thus, by the estimate \eqref{est6}, we get
\begin{align*}
  y_{\lambda,i}(t)&\geq
  c_{2i-1}c_*^2
  \int_0^te^{-c_{2i}(t-s)}\,\ds
  -\frac{c_{2i}}{|\Omega|^{1/2}}\lambda \int_0^te^{-c_{2i}(t-s)}
  \norma{\Psi_{\lambda,\bph}^{(1)}(\bph_\lambda(s))}\,\ds\\
  &\geq
  c_{2i-1}c_*^2
  \int_0^te^{-c_{2i}(t-s)}\,\ds
  -\frac{c_{2i}}{|\Omega|^{1/2}}C\lambda^{1/2} \int_0^te^{-c_{2i}(t-s)}\,\ds\\
  &=
  \left[\frac{c_{2i-1}}{c_{2i}}c_*^2
  -\frac{C\lambda^{1/2}}{|\Omega|^{1/2}}\right]
  (1-e^{-c_{2i}t}).
\end{align*}
Hence, there exists $\lambda_0\in(0,1)$ and a constant $c_0>0$ independent of $\lambda$ such that
\[
   y_{\lambda,i}(t) \geq c_0(1-e^{-c_{2i}t}) \qquad\forall\,t\in[0,T],
   \quad\forall\,\lambda\in(0,\lambda_0).
\]
This concludes the proof.
\end{proof}

\subsection{Estimating the chemical potential}\label{sec:chem}
This subsection is devoted to establishing a uniform bound on the chemical potential $\bmu$. We recall that we only have a bound for its gradient but this
is not enough to recover the full $\L2 \VV$-norm because of the no-flux boundary conditions. As we already pointed out, we cannot argue in the usual way because of the null initial condition
for $\bph_\lambda$. The main idea is to divide the analysis into two cases:
small times (i.e., in a right neighborhood of $t=0$) and large times (i.e., outside a right neighborhood of $t=0$).

\noindent
{\sc Large times estimate.}
By virtue of Lemma~\ref{lem2}, for every $\sigma\in(0,T)$ there exists a
constant $c_\sigma>0$, independent of $\lambda$,
such that, for all $t\in[\sigma,T],$
\begin{align}
  & 0<c_\sigma \leq \min \{(\varphi_{\lambda,1}(t))_\Omega, (\varphi_{\lambda,2}(t))_\Omega\}
  <(\varphi_{\lambda,1}(t))_\Omega + (\varphi_{\lambda,2}(t))_\Omega
  \leq 1-\alpha_0.
  \label{est_largetimes}
\end{align}
Then, we deduce that there exists a compact subset $K_\sigma\subset\simap$, independent of $\lambda$,
such that $(\bph_\lambda(t))_\Omega\in K_\sigma$ for every $t\in[\sigma,T]$. Moreover, thanks to
the definition of resolvent and Yosida approximation combined with \eqref{est6}, for $i=1,2$, we get
\begin{equation}\label{diff_mean}
  |(\varphi_{\lambda,i})_\Omega - (J_\lambda^i(\bph_\lambda))_\Omega|
  \leq |\Omega|^{1/2}\|\bph_\lambda - \bj_\lambda(\bph_\lambda)\|
  \leq |\Omega|^{1/2} \lambda\|\Psi^{(1)}_{\lambda,\bph}(\bph_\lambda)\|
  \leq C\lambda^{1/2},
\end{equation}
where $C$ is independent of $\lambda$.
Consequently, there exists $\lambda_0\in(0,1)$ sufficiently small
and a compact subset $\tilde K_\sigma\subset \simap$, independent of $\lambda$,
such that  $K_\sigma\subset \tilde K_\sigma\subset \simap$ and
$(\bj_\lambda(\bph_\lambda(t)))_\Omega \in\tilde K_\sigma$
for every $\lambda\in(0,\lambda_0)$ and $t\in[\sigma,T]$.
Hence, exploiting Property~\ref{MZ0}
we infer the existence of constants $c_{\tilde K_\sigma}, C_{\tilde K_\sigma}>0$
independent of $\lambda$, such that
\[
  c_{\tilde K_\sigma}\int_\Omega|\Psi^{(1)}_\bph(\bj_\lambda(\bph_\lambda(t)))| \leq
  \int_\Omega\Psi^{(1)}_\bph(\bj_\lambda(\bph_\lambda(t)))\cdot
  (\bj_\lambda(\bph_\lambda(t)) - (\bj_\lambda(\bph_\lambda(t)))_\Omega) + C_{\tilde K_\sigma}
\]
for every $t\in[\sigma,T]$. By the properties of the Yosida approximation, this amounts to writing
\[
  c_{\tilde K_\sigma}\int_\Omega|\Psi^{(1)}_{\lambda,\bph}(\bph_\lambda(t))| \leq
  \int_\Omega\Psi^{(1)}_{\lambda,\bph}(\bph_\lambda(t))\cdot
  (\bj_\lambda(\bph_\lambda(t)) - (\bj_\lambda(\bph_\lambda(t)))_\Omega) + C_{\tilde K_\sigma}{.}
\]
Now, comparing this inequality with equation \eqref{SYS:2_app} tested by
$\bj_\lambda(\bph_\lambda) - (\bj_\lambda(\bph_\lambda))_\Omega$, one deduces that
\begin{align*}
  &c_{\tilde K_\sigma}\int_\Omega|\Psi^{(1)}_{\lambda,\bph}(\bph_\lambda)|
  +\int_\Omega(-\Delta\bph_\lambda)\cdot \bj_\lambda(\bph_\lambda)\\
  &\quad \leq C_{\tilde K_\sigma} +
  \int_\Omega \bmu_\lambda\cdot (\bj_\lambda(\bph_\lambda) - (\bj_\lambda(\bph_\lambda))_\Omega)
  -\int_\Omega \Psi^{(2)}_{\bph}(\bph_\lambda)\cdot (\bj_\lambda(\bph_\lambda) - (\bj_\lambda(\bph_\lambda))_\Omega).
\end{align*}
Then, concerning the second term on the \lhs, recalling the identity $\bj_\lambda(\bph_\lambda) = \bph_\lambda - \lambda \Psi_\bph^{(1)}(\bph_\lambda)$, we have that
\begin{align*}
 \int_\Omega(-\Delta\bph_\lambda)\cdot \bj_\lambda(\bph_\lambda)
 &
 = \int_\Omega|\nabla\bph_\lambda|^2
  -\lambda\int_\Omega(-\Delta\bph_\lambda)\cdot \Psi^{(1)}_{\lambda,\bph}(\bph_\lambda),
\end{align*}
where the first term on the \rhs\ is nonnegative, whereas the second one has to be estimated.
Furthermore, combining the preliminary estimate \eqref{est1} with \eqref{est6}, we infer that
\begin{align}\label{rev:J}
	-\lambda\int_\Omega(-\Delta\bph_\lambda)\cdot \Psi^{(1)}_{\lambda,\bph}(\bph_\lambda)
	\leq
	C \lambda^{1/2} \norma{\Delta \bph_\lambda}
	\leq C.
\end{align}
Taking this into account, exploiting the Poincar\'e--Wirtinger inequality,
the Lipschitz continuity of $\Psi^{(2)}_\bph$,
and the nonexpansivity of $\bj_\lambda$ and \eqref{rev:J}, we deduce that
\begin{align}
  \nonumber
  &c_{\tilde K_\sigma}\int_\Omega|\Psi^{(1)}_{\lambda,\bph}(\bph_\lambda)|\\
  \nonumber
  &\quad\leq C_{\tilde K_\sigma} +
  \int_\Omega \bmu_\lambda\cdot (\bj_\lambda(\bph_\lambda) - (\bj_\lambda(\bph_\lambda))_\Omega)
  -\int_\Omega \Psi^{(2)}_{\bph}(\bph_\lambda)\cdot
  (\bj_\lambda(\bph_\lambda) - (\bj_\lambda(\bph_\lambda))_\Omega)
  \\ & \qquad \non
   -\lambda\int_\Omega \Delta\bph_\lambda \cdot \Psi^{(1)}_{\lambda,\bph}(\bph_\lambda)
  \\
  \nonumber
  &\quad= C_{\tilde K_\sigma} +
  \int_\Omega (\bmu_\lambda-(\bmu_\lambda)_\Omega{)}\cdot (\bj_\lambda(\bph_\lambda) - (\bj_\lambda(\bph_\lambda))_\Omega)
  \\ \non & \qquad
  -\int_\Omega \Psi^{(2)}_{\bph}(\bph_\lambda)\cdot
  (\bj_\lambda(\bph_\lambda) - (\bj_\lambda(\bph_\lambda))_\Omega)
  +C \|\Delta\bph_\lambda\|
  \\
  \label{est7_prequel}
  &\quad\leq C_{\tilde K_\sigma} + C\|\nabla\mu_\lambda\|(1+\|\bph_\lambda\|)
  +C(1+\|\bph_\lambda\|^2 +\|\Delta \bph_\lambda\|).
\end{align}
Thus, estimate \eqref{est1} and \eqref{est4} give
\begin{equation}\label{est7}
  \forall\,\sigma\in(0,T),\quad\exists\,C_\sigma>0:\quad
  \norma{\Psi_{\lambda,\bph}(\bph_\lambda)}_{L^2(\sigma,T; \LL^1(\Omega))} \leq C_\sigma,
\end{equation}
where the constants $\{C_\sigma\}_{\sigma\in(0,T)}$ are independent of $\lambda$.

\noindent
{\sc Small times estimate.}
Let us now study the behavior close to the initial time.
By virtue of Lemma~\ref{lem2}, there exists $T_0\in(0,T)$,
independent of $\lambda$, and a constant $c_0'\in(0,1)$ such that, for all $t\in(0,T_0)$,
\begin{equation}\label{est_smalltimes}
  0<c_0't\leq \min\{(\varphi_{\lambda,1}(t))_\Omega, (\varphi_{\lambda,2}(t))_\Omega\}
  <(\varphi_{\lambda,1}(t))_\Omega+ (\varphi_{\lambda,2}(t))_\Omega \leq
  (c_1+c_3)t \leq \frac{R}2,
\end{equation}
where $R$ is given by Property~\ref{MZ}.
Taking \eqref{diff_mean} into account, we get then, for all $t\in(0,T_0)$ and $\lambda\in(0,\lambda_0)$,
\begin{align*}
  c_0't - C\lambda^{1/2}&\leq \min\{(J_\lambda^1(\bph_\lambda(t)))_\Omega,
  (J_\lambda^2(\bph_\lambda(t)))_\Omega\}\\
  &\leq(J_\lambda^1(\bph_\lambda(t)))_\Omega +
  (J_\lambda^2(\bph_\lambda(t)))_\Omega \\
  &\leq
  (c_1+c_3)t + 2C\lambda^{1/2} \leq \frac R2 + 2C\lambda^{1/2}.
\end{align*}
Now, since $C$ is independent of $\lambda$, without loss of generality
we can suppose that $\lambda_0$ satisfies the inequality
\[
\lambda_0^{1/2}<\frac1{2C}\min\left\{\frac R2,c_0' T_0\right\}.
\]
With this choice, for every $\lambda\in(0,\lambda_0)$, it holds that
\begin{align}
	\label{Tlam}
  T_\lambda:= \frac{2C}{c_0'}\lambda^{1/2} \in (0,T_0), \qquad
  \frac R2 + 2C\lambda^{1/2} < R.
\end{align}
Let us incidentally observe that $ T_\lambda \to 0$ as $\lambda \to 0$.
Consequently, we deduce for every $\lambda\in(0,\lambda_0)$
and for every $t\in(T_\lambda, T_0)$  that
\begin{align*}
  0<c_0't - C\lambda^{1/2}&\leq \min\{(J_\lambda^1(\bph_\lambda(t)))_\Omega,
  (J_\lambda^2(\bph_\lambda(t)))_\Omega\}\\
  &\leq(J_\lambda^1(\bph_\lambda(t)))_\Omega +
  (J_\lambda^2(\bph_\lambda(t)))_\Omega \\
  &\leq
  (c_1+c_3)t + 2C\lambda^{1/2}  {<} R.
\end{align*}
Hence, we can exploit Property~\ref{MZ} with the choice $\bphi=\bj_\lambda(\bph_\lambda(t))$
for every $t\in(T_\lambda,T_0)$. Taking into account the monotonicity of $F$ we infer that,
for every $t\in(T_\lambda,T_0)$,
\begin{align*}
  c_\Psi(c_0't - C\lambda^{1/2})\int_\Omega|\Psi^{(1)}_\bph(\bj_\lambda(\bph_\lambda(t)))|
  &\leq \int_\Omega\Psi^{(1)}_\bph(\bj_\lambda(\bph_\lambda(t)))\cdot
  (\bj_\lambda(\bph_\lambda(t)) - (\bj_\lambda(\bph_\lambda(t)))_\Omega)\\
  &\quad +C_\Psi[(c_1+c_3)t + 2C\lambda^{1/2}]\left(1+ F(c_0't - C\lambda^{1/2})\right)
\end{align*}
and, by the properties of the Yosida approximation this yields, for all $t\in(T_\lambda, T_0)$,
\begin{align*}
  c_\Psi(c_0't - C\lambda^{1/2})\int_\Omega|\Psi^{(1)}_{\lambda,\bph}(\bph_\lambda(t))|
  &\leq \int_\Omega\Psi^{(1)}_{\lambda,\bph}(\bph_\lambda(t))\cdot
  (\bj_\lambda(\bph_\lambda(t)) - (\bj_\lambda(\bph_\lambda(t)))_\Omega)\\
  &\quad +C_\Psi[(c_1+c_3)t + 2C\lambda^{1/2}]\left(1+ F(c_0't - C\lambda^{1/2})\right).
\end{align*}
Arguing now as in the former case, we can compare this inequality with
equation \eqref{SYS:2_app} tested by
$\bj_\lambda(\bph_\lambda) - (\bj_\lambda(\bph_\lambda))_\Omega$.
We can now handle the Laplacian arising from the first term on the left-hand side as in \eqref{rev:J} but keeping the dependence on $\lambda$ explicit. This gives
\begin{align*}
  & c_\Psi(c_0't - C\lambda^{1/2})\int_\Omega|\Psi^{(1)}_{\lambda,\bph}(\bph_\lambda(t))|
  \leq C_\Psi[(c_1+c_3)t + 2C\lambda^{1/2}]\left(1+ F(c_0't - C\lambda^{1/2})\right)\\
  &\quad +\int_\Omega (\bmu_\lambda(t)-(\bmu_\lambda(t))_\Omega {)}\cdot (\bj_\lambda(\bph_\lambda(t)) - (\bj_\lambda(\bph_\lambda(t)))_\Omega)\\
  &\quad -\int_\Omega \Psi^{(2)}_{\bph}(\bph_\lambda(t))\cdot
  (\bj_\lambda(\bph_\lambda(t)) - (\bj_\lambda(\bph_\lambda(t)))_\Omega)
  	+C \lambda^{1/2} \norma{\Delta \bph_\lambda(t)}
\end{align*}
for almost every $t\in(T_\lambda,T_0)$.
We now exploit
the Poincar\'e--Wirtinger inequality,
the Lipschitz continuity of $\Psi^{(2)}_\bph$, \eqref{est1}, and
\eqref{diff_mean},
to infer that, for almost every $t\in(T_\lambda,T_0)$ and for every $\alpha\in(0,1)$,
\begin{align*}
  c_\Psi(c_0't - C\lambda^{1/2})\int_\Omega|\Psi^{(1)}_{\lambda,\bph}(\bph_\lambda(t))|
  &\leq C_\Psi[(c_1+c_3)t + 2C\lambda^{1/2}]\left(1+ F(c_0't - C\lambda^{1/2})\right)\\
  &\quad
  +C_\alpha\left(1+\|\nabla\bmu_\lambda(t)\| +\|\bph_\lambda(t)\|_{\WW}\right)
  \left(t^\alpha + \lambda^{1/2}\right).
\end{align*}
Equivalently, this can be written as follows
\begin{align*}
  c_\Psi \int_\Omega|\Psi^{(1)}_{\lambda,\bph}(\bph_\lambda(t))|
  &\leq C_\Psi \frac{(c_1+c_3)t + 2C\lambda^{1/2}}{c_0't - C\lambda^{1/2}}\left(1+ F(c_0't - C\lambda^{1/2})\right)\\
  &\quad +C_\alpha\frac{t^\alpha + \lambda^{1/2}}{c_0't - C\lambda^{1/2}}
  \left(1+\|\nabla\bmu_\lambda(t)\| +\|\bph_\lambda(t)\|_{\WW}\right)
\end{align*}
for almost every $t\in(T_\lambda, T_0)${.}
Recalling the definition of $T_\lambda$ in \eqref{Tlam}, for every $t\in(T_\lambda,T_0)$ we have that
$c_0't - C\lambda^{1/2} \geq C\lambda^{1/2}$, so that
\[
  \sup_{t\in(T_\lambda,T_0)}\frac{\lambda^{1/2}}{c_0't - C\lambda^{1/2}} \leq \frac1C.
\]
Consequently, for $t\in(T_\lambda,T_0)$ we also have that
\[
  \frac{t}{c_0't - C\lambda^{1/2}} =
  \frac1{c_0'} + \frac{C\lambda^{1/2}}{c_0'(c_0't - C\lambda^{1/2})} \leq
  \frac2{c_0'} \qquad \forall\,t\in(T_\lambda,T_0).
\]
Similarly, noting that $C\lambda^{1/2}\leq \frac{c_0'}{2}t$ for all $t\in(T_\lambda,T_0)$,
we deduce that
\[
  \frac{t^\alpha}{c_0't - C\lambda^{1/2}}\leq \frac2{c_0'}\frac1{t^{1-\alpha}} \qquad \forall\,t\in(T_\lambda,T_0).
\]
Therefore, updating the values of the constants
independent of $\lambda$, it follows that, for almost every $t\in(T_\lambda,T_0)$,
\begin{align}
  \nonumber
  \int_\Omega|\Psi^{(1)}_{\lambda,\bph}(\bph_\lambda(t))|
  &\leq C\left(1+ F(c_0't - C\lambda^{1/2})\right)\\
  \label{est8_prequel}
  &\quad +C_\alpha\left(1+\frac{1}{t^{1-\alpha}}\right)
  \left(1+\|\nabla\bmu_\lambda(t)\| +\|\bph_\lambda(t)\|_{\WW}\right).
\end{align}
Our choices of $T_0$, $\lambda_0$, and $T_\lambda$
imply that $c_0'T_\lambda - C\lambda^{1/2}>0$ and $c_0'T_0 - C\lambda^{1/2}<R$. Thus,
by substitution, one has that
\begin{align*}
  \int_{T_\lambda}^{T_0}|F(c_0's - C\lambda^{1/2})|^q\,\ds
  =\frac1{c_0'}\int_{c_0'T_\lambda - C\lambda^{1/2}}^{c_0'T_0 - C\lambda^{1/2}}
  |F(s)|^q\,\ds\leq \norma{F}_{L^q(0,R)}^q.
\end{align*}
We thus infer that there exists a constant $C>0$, independent of $\lambda$, such that
\[
  \norma{t\mapsto F(c_0't-C\lambda^{1/2})}_{L^q(T_\lambda, T_0)} \leq C.
\]
Moreover, we note that
for every $\ell\in(1,+\infty)$ there exists $\alpha\in(0,1)$ such that
$(1-\alpha)\ell<1$: hence, there exists also a constant $C_\ell>0$, independent of $\lambda$, such that
\[
  \norma{t\mapsto \frac{1}{t^{1-\alpha}}}_{L^\ell(T_\lambda,T_0)}\leq C_\ell.
\]
Therefore, recalling \eqref{est1} and \eqref{est4}, by the H\"older inequality we get
\begin{equation}\label{est8}
  \forall\,p\in(1,2), \quad\exists\,C_p>0:\quad
  \norma{\Psi_{\lambda,\bph}^{(1)}(\bph_\lambda)}_{L^p(T_\lambda,T_0; \LL^1(\Omega)) }\leq C_p,
\end{equation}
where the constants $\{C_p\}_{p\in(1,2)}$ are independent of $\lambda$.
Equivalently, letting
\[
  \chi_\lambda:[0,T]\to[0,1]\,, \qquad
  \chi_\lambda(t):=
  \begin{cases}
  0 \quad&\text{if } t\in[0,T_\lambda],\\
  1 \quad&\text{if } t\in(T_\lambda,T],
  \end{cases}
\]
we have that
\begin{equation}\label{est9}
  \forall\,p\in(1,2), \quad\exists\,C_p>0:\quad
  \norma{\chi_\lambda\Psi_{\lambda,\bph}^{(1)}(\bph_\lambda)}_{L^p(0,T_0;\LL^1(\Omega))} \leq C_p.
\end{equation}

\noindent
{\sc Global estimates.}
It is now immediate, taking \eqref{est7} and \eqref{est9} into account,
to infer, by comparison in equation \eqref{SYS:2_app}, that
\begin{align}
  \label{est10}
  \forall\,\sigma\in(0,T),\quad\exists\,C_\sigma>0:\quad
  &\norma{(\bmu_\lambda)_\Omega}_{L^2(\sigma,T)} \leq C_\sigma,\\
  \label{est11}
  \forall\,p\in(1,2), \quad\exists\,C_p>0:\quad
  &\norma{\chi_\lambda (\bmu_\lambda)_\Omega}_{L^p(0,T_0)} \leq C_p.
\end{align}
The above estimates combined with \eqref{est4} imply, on account of the Poincar\'e--Wirtinger's inequality, that
\begin{align}
  \label{est12}
  \forall\,\sigma\in(0,T),\quad\exists\,C_\sigma>0:\quad
  &\norma{\bmu_\lambda}_{L^2(\sigma,T; \VV)} \leq C_\sigma,\\
  \label{est13}
  \forall\,p\in(1,2), \quad\exists\,C_p>0:\quad
  &\norma{\chi_\lambda \bmu_\lambda}_{L^p(0,T_0; \VV)} \leq C_p,
\end{align}
which in turn yield, by comparison in \eqref{SYS:2_app} and by estimate \eqref{est1},
\begin{align}
  \label{est14}
  \forall\,\sigma\in(0,T),\quad\exists\,C_\sigma>0:\quad
  &\norma{\Psi_{\lambda,\bph}^{(1)}(\bph_\lambda)}_{L^2(\sigma,T; \HH)} \leq C_\sigma,\\
  \label{est15}
  \forall\,p\in(1,2), \quad\exists\,C_p>0:\quad
  &\norma{\chi_\lambda\Psi_{\lambda,\bph}^{(1)}(\bph_\lambda)}_{L^p(0,T_0; \HH)} \leq C_p{.}
\end{align}
Eventually, arguing along the lines of \cite[Lemmas 7.3, and 7.4]{GGW} and using interpolation, we can
regard \eqref{SYS:2_app} as a one-parameter family of time-dependent systems of two elliptic equations with maximal monotone nonlinearity and infer that
\begin{align}
\label{est16}
\forall\,\sigma\in(0,T),\quad\exists\,C_\sigma>0:\quad
  &\norma{\bph_\lambda}_{L^4(\sigma,T;\HH^2(\Omega)) \cap L^2(\sigma,T; \WW^{2,r}(\Omega){)}}
	\leq C_\sigma,\\
\label{est17}
  \forall\,p\in(1,2), \quad\exists\,C_p>0:\quad
  &\norma{\chi_\lambda\bph_\lambda}_{L^{2p}(0,T_0;\HH^2(\Omega)) \cap L^p(0,T_0; \WW^{2,r}(\Omega){)}}
	\leq C_p,
\end{align}
where $r=6$ if $d=3$ and $r$ is arbitrary in $(1,+\infty)$ if $d=2$.

\subsection{Passing to the limit}
\label{SEC:PASSTOLIMIT}

We are now ready to take the final step, namely, letting $\lambda$ go to $0$. More precisely, $\lambda \to 0$ must be
understood in the sequel as ``$\lambda \to 0$ along a suitable subsequence''.

Taking the estimates \eqref{est1}{--}\eqref{est2} and \eqref{est4}{--}\eqref{est5} into account,
recalling Lemma~\ref{lem1}, and
exploiting the classical Aubin--Lions compactness theorems (see, e.g., \cite{Simon}),
we infer the existence of
\begin{align*}
  \bph&\in H^1(0,T; \VV^*)\cap L^\infty(0,T;\VV)\cap L^2(0,T;\WW),\\
  P&\in H^1(0,T;V^*)\cap L^2(0,T; V)\cap L^\infty(Q),\\
  \br&\in H^1(0,T;\VV^*)\cap L^2(0,T; \VV)\cap \LL^\infty(Q),
\end{align*}
such that
\[
  0<c_*\leq P\leq 1, \quad 0<c_*\leq R_i\leq 1 \quad\text{a.e.~in } Q, \quad i=1,2,
\]
and, as $\lambda\to0$,
\begin{align}
  \bph_\lambda\wstarto\bph \quad&\text{in } H^1(0,T; \VV^*)\cap L^\infty(0,T;\VV)\cap L^2(0,T;\WW),\\
  \label{strong1}
  \bph_\lambda\to\bph\quad&\text{in } C^0([0,T];\HH)\cap L^2(0,T;\VV),\\
  P_\lambda\wstarto P \quad&\text{in } H^1(0,T; V^*)\cap L^2(0,T;V)\cap L^\infty(Q),\\
  \label{strong2}
  P_\lambda\to P \quad&\text{in } C^0([0,T]; V^*)\cap L^2(0,T;H),\\
  \br_\lambda\wstarto \br \quad&\text{in } H^1(0,T; \VV^*)\cap L^2(0,T;\VV)\cap \LL^\infty(Q),\\
  \label{strong3}
  \br_\lambda\to \br \quad&\text{in } C^0([0,T]; \VV^*)\cap L^2(0,T;\HH).
\end{align}
It is clear from the strong convergences \eqref{strong1}, \eqref{strong2}, and \eqref{strong3}  that
the following initial conditions are satisfied
\[
\bph(0)=\0, \quad P(0)=P_0,  \quad \br(0)= \Big( \frac {1-P_0}2\Big)\1
\]
in $\VV$, $H$, and $\HH$, respectively. Moreover, \eqref{strong1} and
the Lipschitz continuity of $\Psi^{(2)}_\bph$ readily imply that
\[
  \Psi^{(2)}_\bph(\bph_\lambda) \to \Psi^{(2)}_\bph(\bph) \quad\text{in } C^0([0,T]; \HH).
\]
Also, \eqref{strong1} and \eqref{est6}
yield, by definition of the Yosida approximation, that
\[
  \bj_\lambda(\bph_\lambda)\to\bph \quad\text{in } C^0([0,T]; \HH).
\]
Consequently, it follows from the definitions {\eqref{def:sorgenti_app}}--\eqref{def:Sp_app}
and the dominated convergence theorem that, as $\lambda\to 0$,
\begin{alignat*}{2}
  \Sph^\lambda(\bph_\lambda, P_\lambda,\br_\lambda)&  \to \Sph(\bph, P,\br)
  \qquad&&\text{in } L^2(0,T; \HH),\\
  \SP^\lambda(\bph_\lambda, P_\lambda,\br_\lambda)& \to \SP(\bph, P,\br)
  \qquad&&\text{in } L^2(0,T; H),\\
  \SR^\lambda(\bph_\lambda, P_\lambda,\br_\lambda)& \to \SR(\bph, P,\br)
  \qquad&&\text{in } L^2(0,T; \HH).
\end{alignat*}
Furthermore, noting that $\chi_\lambda\to1$ in $L^s(0,T)$ for every $s\in(1,+\infty)$, as $\lambda \to 0$,
we infer from \eqref{est4} and \eqref{est12}--\eqref{est13} that there exists
\[
  \bmu\in \bigcap_{p\in(1,2), \,\sigma\in(0,T)} L^p(0,T_0;\VV) \cap L^2(\sigma,T; \VV),
  \qquad\nabla\bmu\in L^2(0,T;\HH),
\]
and $T_0\in(0,T)$, independent of $\lambda$, such that, as $\lambda\to0$,
\begin{alignat*}{2}
  \chi_\lambda\bmu_\lambda & \wto\bmu \quad&&\text{in } L^p(0,T_0;\VV)\qquad\forall\,p\in(1,2),\\
  \chi_\lambda\bmu_\lambda & \wto \bmu \quad&&\text{in } L^2(\sigma,T;\VV)\qquad\forall\,\sigma\in(0,T),\\
  \nabla\bmu_\lambda &\wto \nabla\bmu \quad&&\text{in } L^2(0,T;\HH).
\end{alignat*}
Similarly, \eqref{est14}--\eqref{est15} yield the existence of
\[
  \boldsymbol{\xi}\in \bigcap_{p\in(1,2), \,\sigma\in(0,T)} L^p(0,T;\HH) \cap L^2(\sigma,T; \HH)
\]
such that, as $\lambda\to0$,
\begin{align*}
  \chi_\lambda\Psi^{(1)}_{\lambda,\bph}(\bph_\lambda)\wto\boldsymbol{\xi}
  \quad&\text{in } L^p(0,T_0;\HH)\qquad\forall\,p\in(1,2),\\
  \chi_\lambda\Psi^{(1)}_{\lambda,\bph}(\bph_\lambda)\wto\boldsymbol{\xi}
   \quad&\text{in } L^2(\sigma,T;\HH)\qquad\forall\,\sigma\in(0,T).
\end{align*}
By continuity of $\Psi^{(1)}_\bph$ and the fact that
$\bj_\lambda(\bph_\lambda)\to\bph$ almost everywhere in $Q$
and $\chi_\lambda\to1$ almost everywhere in $(0,T)$, we find
\[
  \chi_\lambda\Psi^{(1)}_{\lambda,\bph}(\bph_\lambda) =
  \chi_\lambda\Psi^{(1)}_{\bph}(\bj_\lambda(\bph_\lambda))
  \to \Psi^{(1)}_{\bph}(\bph) \quad\text{a.e.~in } Q,
\]
which readily implies the identification
\[
  \boldsymbol{\xi} = \Psi^{(1)}_{\bph}(\bph) \qquad\text{a.e.~in } Q.
\]
Eventually,
taking into account that $\chi_\lambda\to 1$ almost everywhere in $(0,T)$
and $\bph_\lambda\to\bph$ almost everywhere in $Q$,
from \eqref{est16}--\eqref{est17} we infer that
\begin{align*}
  \bph \in \!\!\bigcap_{p\in(1,2)}\!\!
  L^{2p}(0,T;\HH^2(\Omega)) \cap L^p(0,T; \WW^{2,r}(\Omega))
  \cap \!\! \bigcap_{\sigma\in(0,T)}\!\!
  L^4(\sigma,T;\HH^2(\Omega)) \cap L^2(\sigma,T; \WW^{2,r}(\Omega)),
\end{align*}
where $r=6$ if $d=3$ and $r$ is arbitrary in $(1,+\infty)$ if $d=2$.

Collecting all the information obtained so far, we can pass to the limit as $\lambda \to 0$ in the variational formulation
of \eqref{SYS:1_app}, \eqref{SYS:3_app}--\eqref{SYS:5_app} and infer that the limits fulfill
\begin{align*}
	& \<\dt \bph ,\bv>_{\VV}
	+ \iO \nabla \bmu : \nabla \bv
	=
	\iO \Sph (\bph, P, \br)\cdot \bv,
	\\
	& \<\dt P,v> _V +\iO \nabla  P \cdot \nabla v = \iO \SP(\bph, P, \br) v,
	\\
	& \<\dt \br,\bv>_{\VV}  +\iO \nabla \br : \nabla \bv= \iO \SR(\bph, P, \br)\cdot \bv ,
\end{align*}
for every $\bv\in \VV$ and $v\in V$, almost everywhere in $(0,T)$. Furthermore, multiplying \eqref{SYS:2_app}
by $\chi_\lambda$ yields
\[
  \chi_\lambda\bmu_\lambda = \chi_\lambda(-\Delta\bph_\lambda) +
  \chi_\lambda\Psi^{(1)}_{\lambda,\bph}(\bph_\lambda)
  +\chi_\lambda\Psi^{(2)}_\bph(\bph_\lambda) \quad\text{a.e.~in } Q.
\]
Then, testing by arbitrary $\bv\in\VV$ and integrating by parts (recall that $\chi_\lambda$ is just a function of time), we get
\[
  \int_\Omega\chi_\lambda\bmu_\lambda\cdot\bv =
  \int_\Omega\chi_\lambda\nabla\bph_\lambda:\nabla\bv
  +\int_\Omega\chi_\lambda\Psi^{(1)}_{\lambda,\bph}(\bph_\lambda)\cdot\bv
  +\int_\Omega\chi_\lambda\Psi^{(2)}_\bph(\bph_\lambda)\cdot\bv \quad\text{a.e.~in } (0,T),
\]
from which, thanks to the convergences above, letting $\lambda\to0$ we find
\[
  \int_\Omega\bmu\cdot\bv =
  \int_\Omega\nabla\bph:\nabla\bv
  +\int_\Omega\Psi^{(1)}_\bph(\bph)\cdot\bv
  +\int_\Omega\Psi^{(2)}_\bph(\bph)\cdot\bv \quad\text{a.e.~in } (0,T).
\]
Hence, the regularity of $\bph$
{and the arbitrariness of $\bv\in\VV$ imply the boundary condition $\dn \bph=\0$ almost everywhere on $\Sigma$,} which in turn allows us to infer
\[
  \bmu = -\Delta\bph + \Psi_{\bph}(\bph) \quad\text{a.e.~in } Q.
\]
This completes the proof of Theorem~\ref{THM:EX:WEAK}.

\section{Regularity results}\label{SEC:PAR3d}

\subsection{Proof of Theorem~\ref{THM:REGULARITY}}

Here we show some regularity results for the variables $P$ and $\br$ which directly follow from the parabolic structure of subsystem \eqref{SYS:3}--\eqref{SYS:4}, \eqref{SYS:6}--\eqref{SYS:8}.
For convenience, we rewrite it as
\begin{align*}
	\begin{cases}
	\dt P -\Delta P
	=
	 f_P
	 :=
	c_2 \ph_1  + c_4 \ph_2 - (c_1 R_1 +c_3 R_2)P &\qquad \text{in $Q$},
	\\
	 \dt \br-\Delta \br 	=
	{\bf f}_R
	:=
	-(c_1 P R_1 , c_3 PR_2)
	 + (c_2 \ph_1  ,c_4 \ph_2)
	&\qquad \text{in $Q$},
	\\
	\dn P = 0, \quad \dn \br = \0&\qquad \text{on $\Sigma$},
	\\
	P(0)=P_0 ,
	\quad \br(0)=\br_0=\big(\frac {1-P_0}2 \big)\1 &\qquad \text{in $\Omega$}.
	\end{cases}
\end{align*}
Due to Theorem \ref{THM:EX:WEAK}, we infer that
$ f_P \in \L\infty {H}, $ and ${\bf f}_R\in \L\infty \HH.$
Therefore, by virtue of the assumption on the initial data \eqref{P:regpar}, standard parabolic regularity theory ensure that the regularities in \eqref{reg:P}--\eqref{reg:R} are fulfilled concluding the proof.

\subsection{Proof of Theorem \ref{THM:REGULARITY:2d}}
The approximation scheme presented in Subsection~\ref{ssec:approx}
does not ensure enough regularity on the approximating solutions
allowing us to get \ref{reg:extra'} in a rigorous way.
We show then here how to suitably modify the approximating problem
\eqref{SYS:1_app}--\eqref{SYS:7_app} so that the corresponding
solution $(\bph_\lambda, \bmu_\lambda, P_\lambda, \br_\lambda)$
inherits enough regularity to perform the wanted estimates.

Given $\lambda>0$, we first regularize the initial datum $P_0$
through the resolvent of the Laplace operator with Neumann conditions
(namely, the operator $\RR$ defined in Subsection~\ref{SEC:NOT}) by setting
\[
  P_{0,\lambda}:=(I_H+\lambda\RR)^{-1}P_0 \in W\,, \qquad
  \br_{0,\lambda}:=\Big(\frac{1-P_{0,\lambda}}2 \Big)\1 \in \WW.
\]
Clearly, as $\lambda\searrow0$ it holds that $P_{0,\lambda}\to P_0$ in $H$
and $\br_{0,\lambda}\to  \br_{0}$ in $\HH$. Moreover,
noting that $(I_H+\lambda\RR)^{-1}(1/2)=1/2$ thanks to the Neumann boundary conditions,
the non-expansivity of $(I_H+\lambda\RR)^{-1}$ in $L^\infty(\Omega)$ yields
\[
  \norma{P_{0,\lambda}-\frac12}_{L^\infty(\Omega)} \leq \norma{P_{0}-\frac12}_{L^\infty(\Omega)}.
\]
Hence, the assumptions \eqref{ass:exweak:initialdata}--\eqref{max:ini:Linf}
are still satisfied by $(P_{0,\lambda}, \br_{0,\lambda})$, uniformly in $\lambda$.

We consider here the following modified approximated problem:
\begin{alignat}{2}
	\label{SYS:1_app_mod}
	& \dt \bph_\lambda - \Delta\bmu_\lambda
	= \Sph^\lambda(\bph_\lambda, P_\lambda, \br_\lambda)
		&&\qquad \text{in $Q$},
		\\	\label{SYS:2_app_mod}
	& \bmu_\lambda = \lambda\partial_t\bph_\lambda
	-\Delta \bph_\lambda + \Psi_{\lambda,\bph}(\bph_\lambda)
		&&\qquad \text{in $Q$},
		\\
	\label{SYS:3_app_mod}
	&\dt P_\lambda - \Delta P_\lambda
	=  \SP^\lambda(\bph_\lambda, P_\lambda, \br_\lambda)
		&&\qquad \text{in $Q$},\\
	\label{SYS:4_app_mod}
	&\dt \br_\lambda - \Delta \br_\lambda
	=  \SR^\lambda(\bph_\lambda, P_\lambda, \br_\lambda)
		&&\qquad \text{in $Q$},\\
	\label{SYS:5_app_mod}
	& \dn\bph_\lambda = \dn\bmu_\lambda  =\dn\br_\lambda
		= \0, \quad \dn P_\lambda= 0
		&&\qquad \text{on $\Sigma$},\\
	\label{SYS:7_app_mod}
	&\bph_\lambda(0)
	= \0, \quad
	\br_\lambda(0)
	= \br_{0,\lambda} , \quad
	P_\lambda(0)
	= P_{0,\lambda}
	&&\qquad \text{in $\Omega$}.
	\end{alignat}
The main differences
with respect to \eqref{SYS:1_app}--\eqref{SYS:7_app}
concern the regularized initial data and the (vanishing-in-$\lambda$) viscosity term $\lambda\partial_t\bph_\lambda$
in the Cahn--Hilliard equation. Proceeding exactly as in Subsection~\ref{ssec:approx},
taking into account the more regular initial data in the parabolic subsystem and
the viscosity contribution in the Cahn--Hilliard equation, it is a standard matter to
infer that \eqref{SYS:1_app_mod}--\eqref{SYS:7_app_mod} admits a unique global solution
\begin{align*}
  \bph_\lambda &\in H^1(0,T; \HH)\cap L^\infty(0,T; \VV)\cap L^2(0,T; \HH^3(\Omega)),\\
  \bmu_\lambda &\in L^2(0,T; \WW),\\
  P_\lambda &\in H^1(0,T; H)\cap L^2(0,T; W), \\
  \br_\lambda &\in H^1(0,T; \HH)\cap L^2(0,T;\WW).
\end{align*}
Actually, we can say something more. Indeed,
from the regularity of $P_\lambda$ and $\br_\lambda$ it readily follows that
$P_\lambda\in C^0([0,T]; V)$ and $\br_\lambda\in C^0([0,T]; \VV)$.
By the Lipschitz continuity and boundedness of $h$, this implies that
$h(P_\lambda)h(R_{\lambda,i})\in H^1(0,T; H)\cap L^\infty(0,T; V)$, $i=1,2$.
Since also $\bph\in H^1(0,T;\HH)\cap L^2(0,T;\WW)\subset C^0([0,T]; \VV)$,
we have that $\SP^\lambda(\bph_\lambda, P_\lambda, \br_\lambda)\in H^1(0,T; H)\cap L^\infty(0,T;V)$
and $\SR^\lambda(\bph_\lambda, P_\lambda, \br_\lambda)\in H^1(0,T; \HH)\cap L^\infty(0,T;\VV)$.
Hence,
by parabolic regularity in the reaction-diffusion subsystem
\eqref{SYS:3_app_mod}--\eqref{SYS:4_app_mod} we have then
\begin{align*}
P_\lambda &\in H^2(0,T; V^*)\cap C^1([0,T]; H)\cap H^1(0,T; V)\cap
C^0([0,T]; W)\cap L^2(0,T; H^3(\Omega)),\\
\br_\lambda &\in H^2(0,T; \VV^*)\cap C^1([0,T]; \HH)\cap H^1(0,T; \VV)\cap
C^0([0,T]; \WW)\cap L^2(0,T;\HH^3(\Omega)).
\end{align*}
Analogously, again by the boundedness and Lipschitz continuity of $h$
as well as {\bf A2}, we get that $\Sph^\lambda(\bph_\lambda, P_\lambda, \br_\lambda)
\in H^1(0,T;\HH)\cap L^\infty(0,T; \VV)$. Furthermore,
noting that $\Psi_{\lambda,\bph}\in C^1(\erre^2;\erre^2)$
with bounded derivatives thanks to {\bf A1}, and recalling that
$\bph_\lambda\in H^1(0,T;\HH)\cap C^0([0,T]; \VV)$, one can easily check that
$\Psi_{\lambda,\bph}(\bph_\lambda) \in H^1(0,T;\HH)\cap C^0([0,T]; \VV)$ by the dominated convergence theorem.
It follows that $\bRR\Psi_{\lambda,\bph}(\bph_\lambda)\in H^1(0,T;\WW^*)\cap C^0([0,T]; \VV^*)$.
Hence, one can rewrite the Cahn--Hilliard equation \eqref{SYS:1_app_mod}--\eqref{SYS:2_app_mod}
as
\[
  (I_\HH + \lambda\bRR)\partial_t\bph_\lambda + \bRR^2\bph_\lambda =
  \Sph^\lambda(\bph_\lambda, P_\lambda, \br_\lambda) - \bRR\Psi_{\lambda,\bph}(\bph_\lambda).
\]
Since we have shown that the right-hand side belongs to $H^1(0,T;\WW^*)\cap C^0([0,T];\VV^*)$
and $\bph(0)=\0$, classical parabolic regularity (see, e.g., \cite{Colli-Vis90}) implies that
$\bph_\lambda\in W^{1,\infty}([0,T]; \VV)\cap H^1(0,T; \WW)$.
Hence, comparison in the equation gives
$(I_\HH + \lambda\bRR)\partial_t\bph_\lambda \in H^1(0,T; \WW^*)$,
which implies that $\bph_\lambda \in H^2(0,T;\HH)$.
It follows that $\bph_\lambda \in H^2(0,T;\HH)\cap H^1(0,T;\WW)\subset C^1([0,T]; \VV)$,
and again by comparison in the equation this implies
$\bRR^2\bph_\lambda\in C^0([0,T]; \VV^*)$, yielding
$\bph_\lambda\in C^0([0,T]; \HH^3(\Omega))$ by elliptic regularity.
Putting this information together
and recalling the definition of $\bmu_\lambda$,
we deduce that the approximated
solution $(\bph_\lambda, \bmu_\lambda, P_\lambda, \br_\lambda)$ of the system
\eqref{SYS:1_app_mod}--\eqref{SYS:7_app_mod} satisfies
\begin{align*}
  \bph_\lambda &\in H^2(0,T;\HH)\cap C^1([0,T];\VV)\cap
  H^1(0,T; \WW)\cap C^0([0,T];  \HH^3(\Omega)),\\
  \bmu_\lambda &\in H^1(0,T;\HH)\cap C^0([0,T];\VV)\cap L^2(0,T; \HH^3(\Omega)),\\
  P_\lambda &\in H^2(0,T; V^*)\cap C^1([0,T]; H)\cap H^1(0,T; V)\cap
C^0([0,T]; W)\cap L^2(0,T; H^3(\Omega)),\\
\br_\lambda &\in H^2(0,T; \VV^*)\cap C^1([0,T]; \HH)\cap H^1(0,T; \VV)\cap
C^0([0,T]; \WW)\cap L^2(0,T;\HH^3(\Omega)).
\end{align*}
In particular, the variational formulations of the
approximated equations \eqref{SYS:1_app_mod}--\eqref{SYS:2_app_mod}
hold for {\em every} $t\in[0,T]$, and not just almost everywhere: this will be essential
to give rigorous meaning to the estimates to follow.

Taking these considerations into account, for brevity of notation
we proceed now in a formal way,
by arguing directly on the limiting variables $(\bph,\bmu,P,\br)$ and on system \eqref{SYS:1}--\eqref{SYS:4}, \eqref{SYS:5}--\eqref{SYS:8}, bearing in mind  that
the argument can be made rigorous by working with the aforementioned approximation
and passing to the limit as $\lambda \to 0$ in the spirit of Subsection \ref{SEC:PASSTOLIMIT}.

\noindent
{The vanishing initial condition $\bph(0)= \0$ prevents us to obtain
higher-order estimates starting from the initial time: for this reason, we make use instead of
suitable time-averaged estimates.
We test \eqref{SYS:1} by $t\dt \bmu$, the time-derivative of \eqref{SYS:2} by $-t\dt\bph$, integrate in time, and sum.
Note that thanks to the enhanced regularity of the approximated solutions
to the system \eqref{SYS:1_app_mod}--\eqref{SYS:7_app_mod} proved above,
these testings are actually rigorous (at $\lambda>0$) as scalar products
in $L^2(0,T;\HH)$ for example.}

{We deduce that
\begin{align*}
  &\frac{t}2\norma{\nabla\bmu(t)}^2 - t(\Sph(t), \bmu(t))
  +\int_0^ts\left(\norma{\nabla\partial_t\bph(s)}^2
  +(\Psi_{\bph\bph}(\bph(s))\partial_t\bph(s), \partial_t\bph(s))
  \right)\,\ds\\
  &\quad\leq \frac12\int_0^t\norma{\nabla\bmu(s)}^2\,\ds
  -\int_0^t\left(\Sph(s),\bmu(s)\right)\,\ds
   -\int_0^ts(\partial_t(\Sph(s)), \bmu(s))\,\ds.
\end{align*}
Now, the first two terms on the right-hand side can be controlled by
the estimates \eqref{est12}--\eqref{est13} and the boundedness of $\Sph$ (which is replaced by $\Sph^\lambda$ in the rigorous framework).
Using \ref{ass:pot} along with the compactness inequality \eqref{comp:ineq2},
we infer that
\[
	(\dt \bph, \Psi_{\bph\bph}(\bph)\dt \bph)
	\geq
	- C \norma{\dt \bph}^2
	\geq
	-\frac12 \norma{\nabla \dt\bph}^2
	- C\norma{\dt \bph}^2_{\bstar}.
\]
Next, let us notice that due to \ref{ass:sources}, using once again \eqref{comp:ineq2}
and comparison in \eqref{SYS:1} we obtain
\begin{align*}
	-s(\dt(\Sph(s)) ,  \bmu(s) )
	& \leq s\left(\norma{P(s)}_{L^\infty(\Omega)}\norma{\dt \br(s)}
	+\norma{\partial_t P(s)}\norma{\br(s)}_{\LL^\infty(\Omega)}\right)\norma{\bmu(s)}
	\\ & \qquad + s\norma{\partial_t\bph(s)}_{\VV^*}\norma{\bmu(s)}_\VV
	\\ & \leq
	\norma{\dt \br(s)}^2
	+\norma{\partial_t P(s)}^2 + \frac{s^2}2\norma{\bmu(s)}^2
	+ Cs\left(1+\norma{\nabla\bmu(s)}\right)\norma{\bmu(s)}_\VV
	\\ & \leq
	\norma{\dt \br(s)}^2 +\norma{\partial_t P(s)}^2
	+ Cs^2\norma{\bmu(s)}_\VV^2 + C\left(1+\norma{\nabla\bmu(s)}^2\right),
\end{align*}
where we have updated the value of $C$, as usual.
The regularity of $(\bph,\bmu,P,\br)$ given by
Theorem~\ref{THM:EX:WEAK} and Theorem~\ref{THM:REGULARITY}
yields then
\[
  -\int_0^ts(\dt(\Sph(s)) ,  \bmu(s) )\,\ds
  \leq C\left(1+\int_0^ts^2\norma{\bmu(s)}_\VV^2\,\ds \right).
\]
Moreover, by the Young and Poincar\'e--Wirtinger inequalities
we have, exploiting the regularity of  $(\bph,\bmu,P,\br)$ given by
Theorem~\ref{THM:EX:WEAK} and the form of $\Sph$, that for every $\delta>0$
\begin{align*}
  t\big(\Sph(t) ,  \bmu(t) \big)&\leq
  \delta t \norma{\nabla\bmu(t)}^2 + \delta t^2|(\bmu(t))_\Omega|^2
  \\ & \quad +
  C_\delta (t+1)(\norma{\bph(t)}^2+\norma{\br(t)}^2+\norma{P(t)}^2+1)\\
  &\leq
  \delta t\norma{\nabla\bmu(t)}^2 + \delta t^2|(\bmu(t))_\Omega|^2+
  C_{\delta,T}.
\end{align*}
Let now $T_0$ be as defined in Section~\ref{sec:chem} (recall that
$T_0$ is independent of $\lambda$).
Taking into account  the
(corresponding limit version of) inequality \eqref{est8_prequel},
thanks to the regularity of $\bph$ we have, for every $t\in(0,T_0)$, that
\begin{align*}
  t^2|(\bmu(t))_\Omega|^2
  &\leq Ct^2\left(1+ |F(c_0't)|^2\right)+
  C_\alpha t^{2\alpha}
  \left(1+\|\nabla\bmu(t)\|^2\right),
\end{align*}
which in turn yields by assumption \eqref{ip_F} that
\begin{align*}
  t^{3-2\alpha}|(\bmu(t))_\Omega|^2
  &\leq C_\alpha + C_\alpha t
  \left(1+\|\nabla\bmu(t)\|^2\right).
\end{align*}
Hence, recalling that $\alpha\in(0,1)$ and rearranging the terms, we get in particular that
\[
  t\big(\Sph(t) ,  \bmu(t) \big) \leq \delta t\norma{\nabla\bmu(t)}^2 + C_{\delta}
  \quad\forall\,t\in(0,T_0).
\]
Similarly, exploiting \eqref{est7_prequel} instead, one easily obtains also
the complementary case
\[
  t\big(\Sph(t) ,  \bmu(t) \big) \leq \delta t\norma{\nabla\bmu(t)}^2 + C_{\delta}
  \quad\forall\,t\in(T_0,T).
\]
The same computations also imply, in particular, that
\[
  \int_0^ts^2\norma{\bmu(s)}_\VV^2\,\ds \leq
  C_\alpha\left(1+ \int_0^ts\norma{\nabla\bmu(s)}^2\,\ds\right).
\]
Choosing then $\delta$ small enough and rearranging the terms yield
\begin{align*}
  &t^{3-2\alpha}|(\bmu(t))_\Omega|^2 +
  t\norma{\nabla\bmu(t)}^2 +\int_0^ts\norma{\nabla\partial_t\bph(s)}^2\,\ds
  \leq C_\alpha\left(1+\int_0^ts\norma{\nabla\bmu(s)}^2\,\ds \right).
\end{align*}
Hence the Gronwall lemma, along with the Poincar\'e--Wirtinger inequality, implies} that
for every $\alpha\in(0,1)$ there exists $C_\alpha>0$ such that
\begin{align}
  \label{est_reg1}
	\norma{t^{\frac 12}\nabla\dt\bph}_{L^2(0,T; {\HH})}
	+\norma{t^{\frac32-\alpha}\bmu}_{L^\infty(0,T; {\VV})}
	+\norma{t^{\frac 12}\nabla\bmu}_{L^\infty(0,T;\HH)}
	\leq C_\alpha.
\end{align}
It is then a standard matter to derive from a comparison argument in equation \eqref{SYS:1} that
\begin{align}
  \label{est_reg2}
		\norma{t^{\frac 12}\dt \bph}_{L^\infty(0,T; {\VVp})}
		\leq C_\alpha.
\end{align}
This ends the proof of Theorem \ref{THM:REGULARITY:2d}.

\subsection{Proof of Theorem~\ref{THM:2d:separation}}

We recall that, contrary to the existing literature, here we start from a pure phase $\bph_0 = \0$ as dictated by the model.
This entails, in particular, that $\widetilde{\bmu} \in \Ls 2\VV$ holds just for all $\sigma\in (0,T)$ and not in $\sigma= 0$ as usual.

The proof is split into three steps. The final and crucial estimate that will allow us to deduce \eqref{sep}
is the uniform bound $\widetilde{\bmu} \in L^\infty (\sigma,T; \HH^2(\Omega))$ for all $\sigma\in(0,T)$.
For the sake of convenience, we drop the $\sim$ superscript but on the approximated double-well potential $\widetilde \Psi$.
We can refer to the previous theorem for a rigorous approximation scheme.

\noindent
{\sc First estimate.}
Let us rewrite \eqref{SYS:2} as a one-parameter family of time-dependent elliptic system with maximal monotone perturbation:
\begin{align*}
	\begin{cases}
	- \Delta \bph + \widetilde \Psi_\bph^{(1)}(\bph) = \bmu - \widetilde \Psi^{(2)}_{\bph}(\bph)=:{\bf f}_{\bmu} \quad  & \hbox{in $\Omega\times (0,T)$, }
	\\
	\dn \bph = \0 \quad & \hbox{on $\Gamma\times (0,T)$.}
	\end{cases}
\end{align*}
Owing to \eqref{est_reg1}, the inclusion
$\VV\emb \LL^r(\Omega)$ for all $r >2$ (recall that now $d=2$),
and elliptic regularity theory, we argue componentwise to infer that (see \cite[Cor.~4.1]{GGM})
\begin{align}
\label{est_reg3}
	\norma{ t^{\frac32-\alpha}\bph}_{L^\infty(0,T; \WW^{2,r}(\Omega)) }
	+\norma{t^{\frac32-\alpha}\widetilde \Psi^{(1)}_\bph(\bph)}_{L^\infty(0,T; \LL^r(\Omega))}
	 \leq C_{\alpha,r},
\end{align}
where we underline that the positive constant $C_{\alpha,r}$ may depend on $\alpha$ and $r$, as the notation suggests.
Furthermore, on account of \eqref{growth}, we can argue componentwise as in \cite[Lemma~5.1]{GGM} to get
\begin{equation}
  \label{est_reg4}
  \norma{e^{-C_\alpha t^{\alpha-\frac32}}\widetilde \Psi^{(1)}_{\bph\bph}(\bph)}_{L^\infty(0,T;\LL^r(\Omega))} \leq C_{\alpha,r}.
\end{equation}

\noindent
{\sc Second estimate.}
Taking the time derivatives of \eqref{SYS:1}--\eqref{SYS:2}, one has that
\begin{align}\label{time1}
  &\<\dt^2\bph,\bv>_\VV - (\dt \bmu, \Delta \bv) =  \big( \dt\Sph , \bv \big)
	\quad \forall\,\bv \in \WW, \quad\text{a.e.~in } (0,T),\\
\label{time2}
  &\dt \bmu = - \Delta  \dt \bph + \dt(\widetilde \Psi_{\bph}(\bph))
	 \quad\text{a.e.~in } (0,T).
\end{align}
Given $C_\alpha>0$ as in \eqref{est_reg3},
we test equation \eqref{time1} by $t\exp(-C_\alpha t^{\alpha-\frac32})\partial_t\bph$,
equation \eqref{time2} by $-t\exp(-C_\alpha t^{\alpha-\frac32})\Delta\partial_t\bph$, add the resulting equations together, and integrate with respect to time.
Again, we point out that these computations can be made rigorous as testings in $L^2(0,T;\HH)$
at the approximation level $\lambda>0$, thanks to the enhanced
regularity of the solutions $\bph_\lambda$ and $\bmu_\lambda$ proved above.

Noting that two terms cancel out, we obtain
\begin{align*}
  &\frac{t e^{-C_\alpha t^{\alpha-\frac32}}}2\norma{\dt\bph(t)}^2
  +\int_0^ts e^{-C_\alpha s^{\alpha-\frac32}}\norma{\Delta\dt\bph(s)}^2\,\ds\\
  &\quad\leq\frac12\int_0^te^{-C_\alpha s^{\alpha-\frac32}}
  \left(1 +C_\alpha\left(\frac32-\alpha\right) s^{\alpha-\frac32}\right)
  \norma{\dt\bph(s)}^2\,\ds\\
  &\qquad+\int_0^ts e^{-C_\alpha s^{\alpha-\frac32}}\big(\dt\Sph(s), \dt\bph(s)\big)\,\ds\\
  &\qquad
  +\int_0^ts e^{-C_\alpha s^{\alpha-\frac32}}\big(\dt(\widetilde \Psi_{\bph}(\bph(s))), \Delta\dt\bph(s) \big)\,\ds.
\end{align*}
The first term on the right-hand side is controlled by the estimates \eqref{est_reg1}--\eqref{est_reg2} since
an elementary computation shows that there exists $\widetilde C_\alpha>0$ such that
\[
  e^{-C_\alpha s^{\alpha-\frac32}}
  \left(1 +C_\alpha\left(\frac32-\alpha\right) s^{\alpha-\frac32}\right) \leq \widetilde C_\alpha s
  \quad\forall\,s\in[0,T].
\]
Similarly, by analogous computations, the second term on the right-hand side
is also controlled by the estimates \eqref{est_reg1}--\eqref{est_reg2},
since by {\bf A2} we have
\begin{align*}
  \norma{\dt\Sph} &\leq C\left(\norma{\dt\bph}
  +\norma{P}_{L^\infty(\Omega)}\norma{\dt\br} + \norma{\br}_{\LL^\infty(\Omega)}\norma{\dt P}\right)\\
  &\leq C\left(\norma{\dt\bph}
  +\norma{\dt\br} + \norma{\dt P}\right).
\end{align*}
Hence, possibly updating the value of $C_\alpha$ as usual,
the H\"older and Young inequalities
together with the continuous embedding $\VV\emb \LL^6(\Omega)$ entail that
\begin{align*}
  &\frac{t e^{-C_\alpha t^{\alpha-\frac32}}}2\norma{\dt\bph(t)}^2
  +\frac12\int_0^ts e^{-C_\alpha s^{\alpha-\frac32}}\norma{\Delta\dt\bph(s)}^2\,\ds\\
  &\quad\leq C_\alpha
  +\frac12\int_0^t
  s e^{-C_\alpha s^{\alpha-\frac32}}
  \norma{\widetilde \Psi_{\bph\bph}(\bph(s))}_{\LL^3(\Omega)}^2\norma{\dt\bph(s)}_{\LL^6(\Omega)}^2\,\ds\\
  &\quad\leq C_\alpha +
  C_\alpha\norma{r\mapsto e^{-C_\alpha r^{\alpha-\frac32}}\widetilde \Psi_{\bph\bph}(\bph(r))}_{L^\infty(0,T;\LL^3(\Omega))}^2
  \int_0^Ts\norma{\dt\bph(s)}_{\VV}^2\,\ds.
\end{align*}
Renominating $C_\alpha$ once more, it follows from the estimates
\eqref{est_reg1}--\eqref{est_reg2} and \eqref{est_reg4},
and from elliptic regularity theory by comparison in \eqref{SYS:1}, that
\begin{align}
  \label{est_reg5}
 \norma{t^{\frac 12}e^{-C_\alpha t^{\alpha-\frac32}}\dt\bph}_{L^\infty(0,T; \HH)\cap L^2(0,T;\WW)}
 +\norma{t^{\frac 12}e^{-C_\alpha t^{\alpha-\frac32}}\bmu}_{L^\infty(0,T;\WW)} \leq C_\alpha.
\end{align}

\noindent
{\sc Separation property.}
First, due to the Sobolev embedding $\HH^2(\Omega) \emb \LL^\infty(\Omega)$,
from the estimate \eqref{est_reg5} we have
\begin{align*}
	 \norma{t^{\frac 12}e^{-C_\alpha t^{\alpha-\frac32}}\bmu}_{L^\infty(0,T; \LL^\infty(\Omega))}	
	 \leq C_\alpha.
\end{align*}
Next, arguing as in the proof of \cite[Thm.~5.1]{GGM} componentwise, we get
\begin{align}
\label{sepprop}
	 \norma{t^{\frac 12}e^{-C_\alpha t^{\alpha-\frac32}}\widetilde \Psi_{\bph}^{(1)} (\bph)}_{L^\infty(0,T; \LL^\infty(\Omega))}	
	 \leq C_\alpha.
\end{align}
Then, recalling \ref{growth2}, we have that, for every $\sigma\in(0,T)$,
\begin{align*}
	 \norma{\widetilde \Psi_{\bph}^{(1)} (\bph)}_{L^\infty(\sigma,T; \LL^\infty(\Omega))}	
	 \leq C_\sigma,
\end{align*}
for some $C_\sigma>0$, and \eqref{sep} follows.

Finally, using \eqref{SYS:2} it is easy to recover the last regularity \eqref{reg:extra} in the theorem from \eqref{est_reg5}, the separation property, and elliptic regularity theory.

\begin{remark}
We recall that in order to prove \eqref{sepprop} an essential ingredient is \eqref{est_reg4}.
The argument used in \cite{GGM} (see also \cite{MZ}) to get \eqref{est_reg4} is based on the two-dimensional Moser--Trudinger inequality.
A recent alternative (and simplified) approach which does not require such an inequality can be found in \cite[Sec.~3]{GGG} though it is still two dimensional. What happens in dimension three is an open issue.
\end{remark}

\color{black}

\section{Proof of Theorem \ref{THM:UNIQ:2d}}
\label{SEC:UQ}
This last section is devoted to the unique continuation property of solutions obtained from Theorem~\ref{THM:2d:separation} in the two-dimensional case.
As already mentioned in the Introduction, the separation property \eqref{sep}
plays a fundamental role in the derivation of such a result as it guarantees that the nonlinear potential 
$\widetilde \Psi$, along with its higher-order derivatives, is a globally \Lip\ continuous function.

Let us set the following notation for the difference of two given solutions (the $\sim$ superscript is dropped for the sake of simplicity)
\begin{align*}
	\bph & :=  \bph^1 - \bph^2,
	\quad
	\bmu:= \bmu^1 - \bmu^2,
	\quad
	P:= P^1 - P^2,
	\quad
	\br:= \br^1 - \br^2,
	\\
	\Sph^i& := \Sph(\bph^i, P^i, \br^i),
	\quad
	\SP^i := \SP(\bph^i, P^i, \br^i),
	\quad
	\SR^i := \SR(\bph^i, P^i, \br^i)
	\quad \text{for $i=1,2$},
	\\
	\Sph& := \Sph^1 - \Sph^2,
	\quad 	
	\SP := \SP^1 - \SP^2,
	\quad
	\SR := \SR^1 - \SR^2,
\end{align*}
where, again, the lower subscripts denote the components of the occurring vector, for instance, for the phase field variable $\bph$ we have
\begin{align*}
	\bph = (\ph_1, \ph_2)= (\ph_1^1-\ph_1^2, \ph_2^1-\ph_2^2).
\end{align*}
Due to the definitions of the source terms \eqref{def:sorgenti} and \eqref{def:Sp}, we have
\begin{align}
	\label{Sph:diff}
	\Sph & = - \SR= (c_1 (P^1R_1 + P R_1^2) - c_2 \ph_1, c_3 (P^1R_2 + P R_2^2) - c_4 \ph_2),
	\\
	\label{SP:diff}
	\SP & = -c_1 (P^1R_1 + P R_1^2) + c_2 \ph_1 -  c_3 (P^1R_2 + P R_2^2) + c_4 \ph_2.
\end{align}
Next, we consider problem \eqref{SYS:1}--\eqref{SYS:4}, \eqref{SYS:5}--\eqref{SYS:8} written for the difference of the two given solutions which (formally) reads as follows
\begin{alignat}{2}
	\label{SYS:CD:1:2d}
	&\dt \bph  -\Delta \bmu
	=  \Sph
		&&\qquad \text{in $Q$},
		\\	\label{SYS:CD:2:2d}
	 &\bmu= -\Delta \bph + (\widetilde \Psi_{\bph}(\bph^1) - \widetilde \Psi_{\bph}(\bph^2) )
		&&\qquad \text{in $Q$},
		\\
	\label{SYS:CD:3:2d}
	&\dt P - \Delta P
	=\SP
		&&\qquad \text{in $Q$},\\
	\label{SYS:CD:4:2d}
	&\dt \br - \Delta \br
	= \SR
		&&\qquad \text{in $Q$},
		\\
	\label{SYS:CD:5:2d}
	& \dn \bph= \dn \br = \0, \quad \dn P= 0
		&&\qquad \text{on $\Sigma$},
	\\
	\label{SYS:CD:6:2d}
	& \bph(0) =\0, \quad P_0 =  P_0^1 - P_0^2, \quad \br (0)=\br_0^1-\br_0^2
		&&\qquad \text{in $\Omega$}.
	\end{alignat}

\noindent
On account of Theorem \ref{THM:REGULARITY:2d}, we multiply \eqref{SYS:CD:1:2d} by $\bph$, \eqref{SYS:CD:2:2d} by $-\Delta \bph$, \eqref{SYS:CD:3:2d} by $P$, \eqref{SYS:CD:4:2d} by $\br$, integrate in space,
add the resulting equalities and integrate by parts. This gives
\begin{align}
	\non
	& \frac 12 \frac d {dt} \Big(\norma{\bph}^2 + \norma{P}^2
	+ \norma{\br}^2 \Big)
	+ \norma{\Delta \bph}^2
	+ \norma{\nabla P}^2
	+ \norma{\nabla \br}^2
	\\ & \quad
	= (\Sph, \bph)
	+ (\widetilde \Psi_{\bph}(\bph^1) - \widetilde \Psi_{\bph}(\bph^2) , \Delta \bph)
	+(\SP, P)
	+(\SR, \br).
	\label{uq:proof:1}
\end{align}
The last two terms on the \rhs\ involving $\SP$ and $\SR$ can be handled by using \eqref{Sph:diff} and \eqref{SP:diff}. This gives
\begin{align*}
	(\SP, P)
	+(\SR, \br)
	\leq  C (\norma{\bph}^2+\norma{P}^2+\norma{\br}^2)
\end{align*}
as $(\bph^i, \bmu^i, P^i, \br^i)_i$, for $i=1,2$ fulfill the bounds in Theorem \ref{THM:EX:WEAK}.
Moreover, on account of \eqref{sep}, for all fixed $\s \in (0,T)$ and
$s\in(\s,T)$, we have
\[
\norma{\widetilde \Psi_{\bph}(\bph^1(s)) - \widetilde \Psi_{\bph}(\bph^2(s))}
\leq C_\sigma\norma{\bph^1(s) - \bph^2(s)}.
\]
Therefore, for the other terms on the \rhs\ of \eqref{uq:proof:1}, upon integrating over $(\s,t)$, for an arbitrary $t \in (\s,T)$, we get
\begin{align*}
	& \int_\sigma^t (\Sph(s), \bph(s)) \,\ds
	+	\int_\sigma^t \big(\widetilde \Psi_{\bph}(\bph^1(s)) - 
	\widetilde \Psi_{\bph}(\bph^2(s)) , \Delta \bph(s) \big)\,\ds
	\\ & \quad
	\leq
	\frac 12 	\int_\sigma^t \norma{\Delta \bph(s)}^2\,\ds
	+ C 	\int_\sigma^t(\norma{\bph(s)}^2+\norma{P(s)}^2+\norma{\br(s)}^2) \, \ds,
\end{align*}
where we also owe to \eqref{Sph:diff} and Young's inequality.
Thus, we integrate \eqref{uq:proof:1} in time as above and obtain that
\begin{align*}
	& \norma{\bph(t)}^2
	+ \norma{P(t)}^2+\norma{\br(t)}^2
	 + \frac 12 \int_{\s}^{t} \norma{\Delta \bph(s)}^2\ds
	+ \int_{\s}^{t}( \norma{\nabla \br(s)}^2+ \norma{\nabla P(s)}^2)\,\ds
	\\ & \quad
	 \leq
	 C ({\norma{\bph(\s)}^2}+ \norma{P(\s)}^2+\norma{\br(\s)}^2)
	 + C \int_{\s}^{t} (\norma{\bph(s)}^2 + \norma{P(s)}^2	+ \norma{\br(s)}^2)\,\ds.
\end{align*}
Hence Gronwall's lemma yields \eqref{est:cd:2d}.

\begin{remark} \label{REM:UQ}
We are unable to prove a similar result for weak solutions in dimension three.
As above, let $(\bph^i, \bmu^i, P^i, \br^i)$, $i=1,2$, denote two solutions obtained from Theorem \ref{THM:EX:WEAK} associated to initial data $P_0^i$ and $\br_0^i$.
On account of \cite[Proof of Thm.~3.1, p.~2493]{GGM}, using the notation for the differences introduced in the proof of Theorem \ref{THM:UNIQ:2d}, let us claim that
the following inequality can be obtained:
\begin{align*}
	& \frac 12\frac d {dt} \Big(\norma{\bph}_{-\1}^2+ \norma{P}^2+\norma{\br}^2\Big)
	 + \norma{\nabla \bph}^2 + \norma{\nabla \br}^2 + \norma{\nabla P}^2
	\\ & \quad
	 \leq
	C \Big(\norma{\bph}_{-\1}^2
	 + \norma{P}^2+\norma{\br}^2
	 + (\Lambda+ \Pi)|\bph_\Omega|\Big) .
\end{align*}
The functions $\Lambda$ and $\Pi$ appearing on the \rhs\ are defined and fulfill (due to Theorem \ref{THM:EX:WEAK}), for every $\s \in (0,T)$,
\begin{align*}
	 t \mapsto \Lambda(t) & :=\big(\norma{\Psi_{\bph}^{(1)}(\bph^1(t)) }_{\LL^1(\Omega)}
	 +\norma{\Psi_{\bph}^{(1)}(\bph^2(t)) }_{\LL^1(\Omega)}\big) \in L^1(\sigma,T),
	\\
	 t \mapsto \Pi(t) & : =\max \{|(P^1(t)R_1(t) + P(t) R_1^2(t))_\Omega|,
	 | (P^1(t)R_2 (t)+ P(t) R_2(t)^2)_\Omega|\} \in  L^1(\sigma,T).
\end{align*}
Then, integration over time, \eqref{ass:exweak:initialdata}, the Gronwall lemma,
along with the equivalence of the norms $\norma{\cdot }_{-\1}$ and $\norma{\cdot}_\VVp$, yield that
\begin{align*}
	& \norma{\bph}_{L^\infty (\sigma, T ; \VVp) \cap L^2(\s, T; \VV)}
	+ \norma{P}_{L^\infty (\sigma, T ; H) \cap L^2( \s, T; V)}
	+ \norma{\br}_{L^\infty (\sigma, T ; \HH) \cap L^2( \s, T; \VV)}
	\\ & \quad
	\leq C  (\norma{\bph (\sigma)}_{\VVp}^2+ \norma{P(\sigma)}^2)
	+ C \norma{\bph_\Omega}^{1/2}_{L^\infty(\sigma,T)}.
\end{align*}
The crucial point is that, now, in contrast to \cite{GGM}, the last term cannot
be controlled independently of the original system (cf. \eqref{Sph:diff} and equation \eqref{SYS:CD:1:2d}).
In fact, using \eqref{oss:mean} we find, for $i=1,2$,
\begin{align}
	\non
	& \norma{(\ph_i^1-\ph_i^2)_\Omega}_{L^\infty(\s,T)}
	\non
	\leq C \norma{(P^1R_i + P R_i^2)_\Omega}_{L^1(\s,T)}
	\leq C \norma{P^1R_i + P R_i^2}_{\Ls 1 \Vp}
\end{align}
which does not suffice to express the last term in the above estimate in term of the initial data.
A different choice like the one done in the above proof (i.e., taking $\bph$ as test function in \eqref{SYS:CD:1:2d}) requires
the separation property whose validity is unknown in dimension three.
\end{remark}

\section{Cahn--Hilliard equations with sources}
\label{SEC:CONC}

The techniques developed in this paper allow us to extend some previous well-posedness results known for Cahn--Hilliard equations with source terms
(see, for instance,  \cite{Lam, Mir, FMN, IM, S}).
In particular, our approach is more general compared to the current state of arts as it allows the initial datum $\ph_0$ to be also the pure phase , say, $0$, and this does not comply with the standard requirement $(\ph_0)_\Omega \in (0,1)$.
Let us incidentally comment that in the case of scalar order parameters pure phases are often taken to be $-1$ are $1$ (leading to the corresponding condition $(\ph_0)_\Omega \in (-1,1)$).

We need to simplify our scenario and rephrase the problem  in a form which can be useful elsewhere.
This extension is not just a mathematical improvement: despite the
fact that condition $(\ph_0)_\Omega \in (-1,1)$ is perfectly natural for the classical Cahn--Hilliard equation, where pure phases are equilibrium configurations due to the mass conservation property, this may not be the case if a source term is present because in this case no mass conservation is expected. A meaningful example is just given by the phenomenon whose mathematical model is analyzed in this paper.

We use the same notation as before, namely, $\ph$ stands for the order parameter,
$\mu$ for the corresponding chemical potential,
$\ph_0$ is the initial datum, and $F'$ denotes the derivative of a singular, while regular, potential $F$, the prototype being the logarithmic potential, whose convex part is defined by
\begin{align}
	\label{Flog:mono}
	F_{\rm log,1 }(r) := \Psi^{(1)}\Big(0,\frac{r+1}2\Big)=\Psi^{(1)}\Big(\frac{r+1}2,0\Big), \quad r \in [-1,1],
\end{align}
{and $\Psi^{(1)}$ is given as in \eqref{Flog}.}
Moreover, let $S(\ph) $ and $g$ be given functions acting as source terms.
Then, we consider the initial and boundary value problem
\begin{alignat}{2}	
	\label{sys:1:abstract}
	&  \dt \ph - \Delta \mu  = S(\ph) \qquad && \text{in $Q$,}
	 \\
	\label{sys:2:abstract}
	 & \mu  =  -\Delta \ph + F'(\ph) + g  \qquad && \text{in $Q$,}
	 \\
	\label{sys:3:abstract}
	&  \dn \ph
	 =  \dn\mu
	= 0  \qquad && \text{on $\Sigma$,}
	 \\	
	 \label{sys:4:abstract}
	&  \ph(0) = \ph_0  \qquad && \text{in $\Omega$.}
\end{alignat}
A key point to handle Cahn--Hilliard-type systems with singular potential is the mass conservation property arising from \eqref{sys:1:abstract} when $S(\ph)=0$.
In those problems, the mass is conserved, and therefore, to fulfill the physical requirement $(\ph(t))_\Omega \in (-1,1)$ for all $t \in [0,T]$ (necessary to handle the singular term $F'(\ph)$ in \eqref{sys:2:abstract}), it suffices to choose the initial datum $\ph_0$ such that $(\ph_0)_\Omega \in (-1,1)$.
For this reason, the main difficulty of \eqref{sys:1:abstract}--\eqref{sys:4:abstract}, when $S(\ph)\not=0$, is the interplay between the singularity of $F'$ and the evolution of total mass of the order parameter, which is ruled by $S(\ph)$. In fact, testing equation \eqref{sys:1:abstract} by $\frac 1{|\Omega|}$ and setting $y:= \ph_\Omega$, we find
\begin{equation}
\label{ODEmean}
	y' (t)= \frac 1 {|\Omega|}\iO S(\ph(t)) \quad \text{for all $t \in [0,T],$}
\end{equation}
meaning that the physical condition $y(t)=\ph_\Omega (t)\in (-1,1)$ is fulfilled for all $t \in [0,T]$ if and only if $S$ has a suitable structure.

As a direct consequence of Theorem \ref{THM:EX:WEAK}, we can state an existence result for  \eqref{sys:1:abstract}--\eqref{sys:4:abstract}.

\noindent
Suppose that the potential $F$ enjoys the splitting $F=F_1 + F_2$ with
\begin{align}
	\label{F:abs:1}
	& F_1 \in C^0([-1,1]) \cap C^2((-1,1)), \quad F_2 \in C^1(\erre),
	\\
	\label{F:abs:2}
	& F_1 :\erre \to [0, +\infty]\quad\text{is proper and convex},
	\\
	\label{F:abs:3}
	& F_2':\erre \to \erre \quad\text{is $L_0$-Lipschitz continuous for some $L_0>0$.}
\end{align}
Assume, in addition, that:
\begin{align}
	\label{F:abs:4}
	& \lim_{r \to -1^{+}} F'(r) = -\infty,
	\quad
	\lim_{r \to 1^-}F'(r) = +\infty,
	\\
	\label{F:abs:5}
	&
	\exists\, \rho \in (0,1), \;q \in (2,+\infty) : \; F' \in L^q(-1,-1+\rho)\cap C^0(-1,-1+\rho).
\end{align}
Arguing as in Appendix \ref{app}, it is easy to check that
conditions \eqref{F:abs:1}--\eqref{F:abs:5} imply
the following properties:
\begin{property}\label{MZ0:abs}
 For every compact subset $K\subset (-1,1)$,
  there exist constants $c_K, C_K>0$ such that,
  for every $r\in(-1,1)$ and for every $r_0\in K$ it holds that
  \begin{equation}\label{MZ_base:abs}
  c_K|F'_1(r)|
  \leq F'_1(r)(r-r_0)
  +C_K.
  \end{equation}
\end{property}
\begin{property}\label{MZ:abs}
  There exist constants $c_F, C_F>0$ such that, for every measurable
  $\phi:\Omega\to(-1,1)$ satisfying
  \[
  -1<\phi_\Omega< -1+ \rho,
  \]
  it holds that
  \begin{equation}\label{MZ_gen:abs}
  c_F\, (\phi_\Omega+1) \int_\Omega|F_1'(\phi)|
  \leq \int_\Omega F'_1(\phi)(\phi- \phi_\Omega)
  +C_F\, (\phi_\Omega+1)
  F'(\phi_\Omega).
  \end{equation}
\end{property}
In particular, we note that the logarithmic potential \eqref{Flog:mono}
satisfies \eqref{F:abs:1}--\eqref{F:abs:5}, hence also Properties~\ref{MZ0:abs}--\ref{MZ:abs}.

The existence result is given by the following theorem.
\begin{theorem}\label{THM:ABSTRACT}
Let $F$ be as above.
Assume also that
\begin{align*}
	\ph_0 \in V, \quad F(\ph_0)\in\Lx1, \quad (\ph_0)_\Omega \in [-1,1),
\end{align*}
and let $g \in \L p V \cap \Ls2 V$ for every $p\in (1,2)$ and every $\s \in (0,T)$.
If $S$ has the following structure:
\begin{align}
	\label{source:splitting}
	S(\ph)=  - m \ph + h(\ph,g),	 \quad \text{with $m>0$ and $h:\erre\times \erre\to\erre$,}
\end{align}
and such that
\begin{align}
	\label{source:splitting2}
	-m \leq h(x,y)\leq m \quad\text{for every~$(x,y)\in\erre^2$.}
\end{align}
Then, problem \eqref{sys:1:abstract}--\eqref{sys:4:abstract} admits a weak solution $(\ph,\mu)$ such that
\begin{align*}
	& \ph \in \H1 \Vp \cap \L\infty V \cap \L2 W,
	\\
	& \ph \in \Ls4 W \cap \Ls2 {W^{2,r}(\Omega)} \quad \forall \sigma \in (0,T),
	\\
	& \ph \in \L {2p} W \cap \L {p} {W^{2,r}(\Omega)} \quad \forall p\in (1,2),
	\\	
	& \ph \in L^\infty(Q), \quad |\ph(x,t)| <1 \quad\text{ for a.a. } (x,t) \in Q,
	\\
	& \mu \in \L p  V \cap \Ls2 V \quad \forall p\in (1,2) \quad  \forall \sigma \in (0,T),
    \\
    &\nabla \mu \in \L2 H,
\end{align*}
with $r=6$ if $d=3$ and $r$ arbitrary in $(1,+\infty)$ if $d=2$,
\begin{align*}
		& \mu=  - \Delta \ph + F'(\ph) +g \quad \text{$a.e.$ in $Q$},
\end{align*}
and
the variational equality
\begin{align*}
	& \<\dt \ph ,v>_{V}
	+ \iO \nabla \mu \cdot \nabla v = \iO S(\ph) v,
\end{align*}
is satisfied for every test function $ v\in V$, and almost everywhere in $(0,T)$.
Moreover, it holds that
\begin{align*}
	\ph(0)=\ph_0 \quad \text{a.e.~in $\Omega.$}
\end{align*}
\end{theorem}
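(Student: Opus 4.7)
The proof mirrors the scheme of Theorem~\ref{THM:EX:WEAK} developed in Section~\ref{sec:ex_weak}, now in a scalar and more abstract setting. The structural assumption on the source, namely $S(\varphi)=-m\varphi+h(\varphi,g)$ with $|h|\leq m$, plays the role played in the multi-component model by the reaction terms \eqref{def:sorgenti}--\eqref{def:Sp}: it provides an explicit ODE for the spatial mean $\varphi_\Omega$ whose analysis is compatible with the possibility that $\varphi_0$ is a pure phase, i.e.~$(\varphi_0)_\Omega\in\{-1\}\cup (-1,1)$.

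\textbf{Approximation and preliminary estimates.} First I would introduce the Moreau--Yosida regularization $F_1^\lambda$ of $F_1$ with resolvent $J_\lambda:\erre\to (-1,1)$, set $F_\lambda=F_1^\lambda+F_2$, and replace the source by the Lipschitz truncation $S^\lambda(\varphi):=-m\,J_\lambda(\varphi)+h(J_\lambda(\varphi),g)$; this is the scalar analogue of \eqref{def:sorgenti_app}--\eqref{def:Sp_app}. A Banach fixed-point argument exactly as in Subsection~\ref{ssec:approx} provides a unique $(\varphi_\lambda,\mu_\lambda)$ with the regularity stated there. Then, since $\varphi_0\in V$ and $F(\varphi_0)\in L^1(\Omega)$, I would successively perform: (i) the preliminary iterative bound of Subsection~\ref{sec:prelim_est} applied to the shifted variable $\varphi_\lambda-\varphi_0$ (which vanishes at $t=0$), obtaining
\[
\norma{\varphi_\lambda-\varphi_0}_{L^\infty(0,t;H)\cap L^2(0,t;W)}\leq C_\alpha t^\alpha \qquad \forall\,\alpha\in[0,1),
\]
via the convexity of $F_1^\lambda$; (ii) the convex-conjugate dual estimate of Subsection~\ref{ssec:dual}, yielding a uniform $L^1(Q)$-bound on both $F_1^\lambda(\varphi_\lambda)$ and $(F_1^\lambda)^*((F_1^\lambda)'(\varphi_\lambda))$, hence $\lambda^{1/2}\|(F_1^\lambda)'(\varphi_\lambda)\|_{L^2(0,T;H)}\leq C$; (iii) the energy estimate of Subsection~\ref{sub:energy}, absorbing the source contribution by Young's inequality with $(F_1^\lambda)^*$ and the bound $|h|\leq m$, to obtain $\|\varphi_\lambda\|_{L^\infty(0,T;V)}+\|\nabla\mu_\lambda\|_{L^2(0,T;H)}\leq C$.

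\textbf{Mean value ODE and chemical potential.} Setting $y_\lambda(t):=(\varphi_\lambda(t))_\Omega$ and testing the first equation by $1/|\Omega|$, the structure of $S^\lambda$ yields the scalar Cauchy problem
\[
y_\lambda' + m\,y_\lambda = (h(J_\lambda(\varphi_\lambda),g))_\Omega + m\,(\varphi_\lambda-J_\lambda(\varphi_\lambda))_\Omega, \qquad y_\lambda(0)=(\varphi_0)_\Omega,
\]
where the last term is $O(\lambda^{1/2})$ thanks to step~(ii). The variation of constants formula, combined with $|h|\leq m$, then gives the sharp bound $y_\lambda(t)+1\geq ((\varphi_0)_\Omega+1)e^{-mt}-C\lambda^{1/2}$ and a symmetric upper bound $1-y_\lambda(t)\geq (1-(\varphi_0)_\Omega)e^{-mT}-C\lambda^{1/2}$. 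This is precisely the analogue of Lemma~\ref{lem2}. Splitting $(0,T)$ into a ``large times" window $[\sigma,T]$ and a ``small times'' window $(T_\lambda,T_0)$ exactly as in Subsection~\ref{sec:chem}, the large-times bound is obtained from Property~\ref{MZ0:abs} giving $\mu_\lambda\in L^2(\sigma,T;V)$ uniformly, while the small-times bound exploits Property~\ref{MZ:abs} applied to $J_\lambda(\varphi_\lambda)$, whose right-hand side features the product $(\phi_\Omega+1)F'(\phi_\Omega)$. Using assumption \eqref{F:abs:5} (integrability $F'\in L^q$ near $-1$ with $q>2$), the H\"older inequality delivers the desired
$\chi_\lambda\mu_\lambda\in L^p(0,T_0;V)$ for every $p\in(1,2)$, $\chi_\lambda$ being the indicator of $(T_\lambda,T]$.

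\textbf{Passing to the limit and the main obstacle.} Aubin--Lions compactness combined with the convergences above allows one to pass to the limit $\lambda\to 0$ along a subsequence, identifying $(F_1^\lambda)'(\varphi_\lambda)\to F_1'(\varphi)$ almost everywhere in $Q$ via the continuity of the resolvent $J_\lambda$ and the strong convergence of $\varphi_\lambda$, exactly as in Subsection~\ref{SEC:PASSTOLIMIT}. The main obstacle is the critical case $(\varphi_0)_\Omega=-1$, i.e.~$\varphi_0\equiv -1$: here the mere bound $|h|\leq m$ does not prevent the ODE solution from remaining identically $-1$, so the quantitative separation argument of Lemma~\ref{lem2} is not available and one has to rely solely on Property~\ref{MZ:abs} to convert the degeneracy $\varphi_\Omega(t)+1\to 0$ into a genuine $L^p$-in-time integrability of the chemical potential. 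The condition $q>2$ in \eqref{F:abs:5} is precisely what guarantees that the resulting exponent $p$ lies in $(1,2)$, matching the regularity framework of Theorem~\ref{THM:EX:WEAK}.
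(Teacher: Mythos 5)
Your overall scheme is the right one and closely follows the paper's intent (the paper does not give a detailed proof of this theorem and simply claims it as a direct consequence of Theorem~\ref{THM:EX:WEAK}). The Yosida regularization, the truncated source, the Banach fixed point, the dual estimate, and the large-times/small-times splitting are all correctly transcribed to the scalar setting. However, there is a genuine gap in the step that is actually the crux of the whole argument: the mean-value ODE.

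You claim to derive, via variation of constants, the lower bound $y_\lambda(t)+1\geq ((\varphi_0)_\Omega+1)e^{-mt}-C\lambda^{1/2}$ and call it ``precisely the analogue of Lemma~\ref{lem2}.'' It is not. The whole point of Lemma~\ref{lem2} is not that the mean cannot drop below an exponentially decaying curve, but that the \emph{forcing} $c_{2i-1}(P_\lambda R_{\lambda,i})_\Omega \geq c_{2i-1}c_*^2>0$ (guaranteed by the separation \eqref{sep_PR} and the hypothesis \eqref{c}) pushes the mean \emph{strictly away} from the boundary for $t>0$, yielding $(\ph_{\lambda,i}(t))_\Omega\geq c_0(1-e^{-c_{2i}t})\geq c_0' t$ near $t=0$. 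Your bound degenerates to $y_\lambda(t)+1\geq -C\lambda^{1/2}$ in the only interesting case $(\varphi_0)_\Omega=-1$, hence it gives no quantitative separation rate. Without the linear-in-$t$ lower bound $y_\lambda(t)+1\geq c_0' t$, the small-times analysis of Subsection~\ref{sec:chem} cannot be reproduced: after dividing by the coefficient $c_F(\phi_\Omega+1)$, Property~\ref{MZ:abs} produces a $1/(\phi_\Omega+1)$ factor in front of the chemical-potential contribution, and one needs to know how fast $\phi_\Omega(t)+1$ grows from zero to turn this into an $L^p$-in-time bound (this is exactly what enables the change-of-variable $\int_{T_\lambda}^{T_0}|F(c_0's-C\lambda^{1/2})|^q\,\ds\leq\|F\|_{L^q(0,R)}^q/c_0'$ in the paper). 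Your final sentence, ``one has to rely solely on Property~\ref{MZ:abs},'' is therefore not a recovery but an acknowledgment that the key ingredient is missing. The bound $|h|\leq m$ alone is not enough: if $h$ attains $-m$ when $\ph$ is near $-1$, the mean may stay at $-1$ and no separation occurs. A correct proof must extract a strict push-away from the structure of $h$ (as happens implicitly in the Cahn--Hilliard--Oono case $h\equiv mc$ with $|c|<1$, and in the original theorem through the product $P_\lambda R_{\lambda,i}\geq c_*^2$), for instance under an assumption of the form $h(x,y)\geq -m+\delta$ for some $\delta>0$ whenever $x$ is near $-1$, and then re-derive the Lemma~\ref{lem2}-type estimate $y_\lambda(t)+1\geq c_0(1-e^{-mt})$.

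A second, smaller point: the ``shifted variable'' argument in step~(i) is only safe in the degenerate case $(\varphi_0)_\Omega=-1$, where $\varphi_0\equiv -1$ is constant (hence $\Delta\varphi_0=0$). For general $\varphi_0\in V\setminus W$ with $(\varphi_0)_\Omega\in(-1,1)$, testing by $-\Delta(\varphi_\lambda-\varphi_0)$ is not justified. You should make explicit that the weighted-in-time preliminary estimate is needed (and applied via the shift) only in the pure-phase case, while for $(\varphi_0)_\Omega\in(-1,1)$ the classical arguments (with $\sigma=0$) already suffice and no shift is required.
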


\begin{remark}
\label{UC3D}
In the three-dimensional case we still cannot prove an equivalent result as Theorem \ref{THM:UNIQ:2d} (see Remark \ref{REM:UQ}).
However, we claim that this can be achieved  for systems like \eqref{sys:1:abstract}--\eqref{sys:4:abstract} if $h(\ph,g)$ in \eqref{source:splitting} is constant.
In fact, this latter assumption entails that, for two given solutions $(\ph_i,\mu_i)_i$ associated with data $g_i$, $i=1,2$, it holds that
\begin{align*}
	S(\ph_1,g_1)-S(\ph_2,g_2) = - m (\ph_1-\ph_2)
\end{align*}
leading to a nice equation for the difference of the means (see \eqref{ODEmean}) allowing us to argue as in \cite{GGM}.
\end{remark}

In the following subsections we briefly discuss two concrete and significant examples strictly related to the above case: the Cahn--Hilliard--Oono equation and tumor growth models based on the phase field approach.

\subsection{The Cahn--Hilliard--Oono equation}
\label{SEC:CHO}
The well-known Cahn--Hilliard--Oono equation is a particular example of equation \eqref{sys:1:abstract} where
\begin{align*}
	g\equiv 0,
	\quad
	h(\ph,g)= h(\ph)= m c,
	\quad
	c\in(-1,1).
\end{align*}
We refer the reader to \cite{GGM} and references therein, where the Flory--Huggins type potential is analyzed (see also \cite{MT}).
Recalling \cite[Eq{.} (10)]{GDM}, the constant $c$ represents the relative backward reaction rate in the phase-separating mixture
at a given temperature below a certain critical temperature. The phase separation process can be induced by the chemical reaction only. Therefore, one
may start with $\ph(0)=-1$, a case which was excluded in the analysis performed in \cite{GGM}. Theorem \ref{THM:ABSTRACT} ensures the existence of a weak solution
also in this case. In addition, uniqueness may hold (see Remark \ref{UC3D}) and we can say more in dimension two (see Theorem~\ref{THM:REGULARITY:2d}).

\subsection{Tumor growth models}
\label{SEC:TUMOR}
The diffuse interface approach is widely used to model tumor growth dynamics and the resulting systems are related to the one analyzed here
(see, e.g., \cite{CL, GLSS}).
In particular, the multiphase approach is used in \cite{KS2, GLNS, FLRS} to account for the complex nature of the tumor which may exhibit a stratified nature, where every layer possesses specific properties.
The mathematical structure of those systems resembles system \eqref{SYS:1}--\eqref{SYS:4}, that is, a multiphase Cahn--Hilliard equation with source term is coupled with reaction-diffusion equations for some chemicals (e.g., nutrients).

For the sake of simplicity, let us focus on the case of scalar order parameters (i.e., two-species tumor) to highlight how our previous analysis may help to study systems like \eqref{sys:1:abstract}--\eqref{sys:4:abstract}.
In that scenario, the order parameter $\ph$ describes the presence/absence of tumor cells with the convention that the level set $\{\ph=-1\}:= \{x \in \Omega: \ph(x)=-1\}$ describes the healthy region, whereas $\{\ph=1\}$ the tumor region.
Then, the Cahn--Hilliard-type equation with source term accounts for cell-to-cell adhesion
effects, whereas the reaction-diffusion equation describe the evolution of a nutrient
species $n$ surrounding the tumor acting as the primary source of nourishment for the tumor mass: we refer to \cite{GLSS} for more details on the modeling.
The latter is usually ruled by a reaction-diffusion equation of the form
\begin{align}	\label{nutrient:abs}
		\dt n - \div \big(\eta(\ph)\nabla (n - \chi \ph )\big) = S_n(\ph,n) \quad \text{in $Q$,}
\end{align}
endowed with no-flux or Robin boundary conditions as well as a suitable initial condition.
There, $S_n(\ph,n)$ is a source term, $\eta(\ph)$ is the nutrient mobility, and $\chi\geq  0$ is the chemotaxis sensitivity.
Concerning regular potentials, the literature is huge and, without the claim to be exhaustive, let us mention \cite{FGR, GL1} and the references therein.
On the other hand, the case of singular potentials has been less investigated.
In this direction, it is worth mentioning that a popular alternative to include more general potentials (and much more general source terms) consists in introducing suitable regularizations (e.g. a viscosity term like in \eqref{SYS:2_app_mod}) in order overcome the complications arising from the singularity of
$F'(\ph)$ in \eqref{sys:2:abstract} (see, e.g., \cite{CGH,SS} and the references therein).

Let us mention that the system analyzed in \cite{FLRS} (see also \cite{GLRS, He})
readily frames into \eqref{sys:1:abstract}--\eqref{sys:4:abstract}
(neglecting the velocity field and using a simplified version of \eqref{nutrient:abs}) where
\begin{align*}
	g = -\chi n, \quad S(\ph)=S(\ph,n)=-m \ph + h(\ph,n).
\end{align*}

Focusing on Flory--Huggins like potentials, the advantage of our approach consists in permitting the initial configuration $\ph_0$
to be chosen as the healthy region $\ph_0=-1,$ which would have been otherwise prevented if one requires the standard $(\ph_0)_\Omega\in (-1,1)$.
As a final remark, let us notice that the mathematical analysis can be challenging if one chooses \eqref{nutrient:abs} as the nutrient equation. Indeed, in this case,
the regularity of the nutrient variable, and thus of the source $g$ in Theorem \ref{THM:ABSTRACT}, cannot be deduced a priori. This is an interesting open issue for future investigations.

\appendix
\section{Proof {of} Proposition~\ref{prop:log}}
\label{app}
Let us show that the {convex part of the} multiphase logarithmic potential $\Psi^{(1)}$ defined in \eqref{Flog}
satisfies Property~\ref{MZ} in the {stronger} sense of Proposition~\ref{prop:log}.

Given an arbitrary $\bphi=(\phi^1,\phi^2)$ as in Property~\ref{MZ}, let us set for brevity of notation
\[
  m:=\min\{\phi^1_\Omega, \phi^2_\Omega\}, \qquad
  M:=\phi^1_\Omega + \phi^2_\Omega.
\]
We introduce now a suitable partition of $\simap$ depending on $\bphi$.
First of all, {see Figure \ref{fig:schema:D}}, we define the sets
\begin{align*}
  D_1&:=\{(x,y)\in\simap:0<x\leq1/4,\quad 1-2x< y< 1-x\},\\
  D_2&:=\{(x,y)\in\simap:0<y\leq1/4,\quad 1-2y< x< 1-y\},
\end{align*}
and
\[
  D:=D_1\cup D_2.
\]

{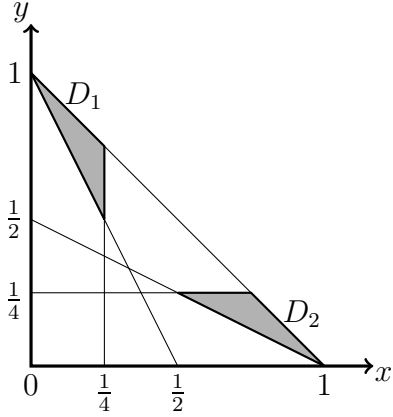
\begin{figure}[h]
\centering
\begin{tikzpicture}[inner sep=0pt,label distance = 1em, scale=1.3]
\draw[very thick,<->] (3.5,0) node[anchor=north west] {$x$} -| (0,3.5) node[anchor=south east] {$y$} ;
\filldraw[color=black!100, fill=black!30, thick, line join=bevel] (0,3) -- (0.75,1.5) -- (0.75, 2.25) -- cycle;
\filldraw[color=black!100, fill=black!30, thick, line join=bevel] (3,0) -- (1.5,0.75) -- (2.25,0.75) -- cycle;
\node (D1) [xshift=.7cm, yshift=3.6cm] {$D_1$};
\node (D2) [xshift=3.6cm, yshift=.7cm] {$D_2$};
%
\draw (0,0) node[below=3pt] {$0$} -- (1.5,0) node[below=3pt] (m) {$\frac12$} --
(3,0) node[below=3pt] {$1$} -- (0,3) node[left=3pt] {$1$} -- cycle;
\draw (1.5,0) -- (0,3);
\draw (0,1.5) node[left=3pt] {$\frac 12$} -- (3,0);
\draw (0.75,0) node[below=3pt] {$\frac 14$} -- (0.75,2.25);
\draw (0,0.75) node[left=3pt, yshift=-3pt] {$\frac 14$} -- (2.25,0.75);
\end{tikzpicture}
\caption{Schematics for the partition of the simplex $\Delta_\circ$: definition of $D_1$ and $D_2$.}
\label{fig:schema:D}
\end{figure}}
Moreover, we partition $\simap\setminus D${, see Figure \ref{fig:schema:ABC},} in the two following ways:
\begin{align*}
  A_1&:=\{(x,y)\in\simap:0<x\leq m/2,\quad 0<y\leq 1-2x\},\\
  B_1&:=\{(x,y)\in\simap:m/2<x\leq 1/2,\quad 0<y\leq 1-2x\},\\
  C_1&:=\Delta_\circ\setminus (A_1\cup B_1\cup D),
\end{align*}
and
\begin{align*}
  A_2&:=\{(x,y)\in\simap:0<y\leq m/2,\quad 0<x\leq 1-2y\},\\
  B_2&:=\{(x,y)\in\simap:m/2<y\leq 1/2,\quad 0<x\leq 1-2y\},\\
  C_2&:=\Delta_\circ\setminus (A_2\cup B_2\cup D).
\end{align*}
{\begin{figure}[h]
\centering
\begin{tabular}{cc}
\begin{tikzpicture}[inner sep=0pt,label distance = 1em, scale=1.3]
\draw[very thick,<->] (3.5,0) node[anchor=north west] {$x$} -| (0,3.5) node[anchor=south east] {$y$} ;
\filldraw[color=black!100, fill=black!30, thick, line join=bevel](0,0) -- (0,3) -- (0.5,2) -- (0.5,0) -- cycle;
\filldraw[color=black!100, fill=black!15, thick](0.5,0) -- (0.5,2) -- (1.5,0) -- cycle;
\filldraw[color=black!100, fill=black!0, thick, line join=bevel] (1.5,0)  -- (0,3) -- (3,0) -- cycle;
\node (A_1) [xshift=.3cm, yshift=.8cm] {$A_1$};
\node (B_1) [xshift=1.1cm, yshift=.8cm] {$B_1$};
\node (C_1) [xshift=2.3cm, yshift=.8cm] {$C_1\cup D$};
\draw (0,0) node[below=3pt] {$0$} -- (1.5,0) node[below=3pt] (m) {$\frac12$} --
(3,0) node[below=3pt] {$1$} -- (0,3) node[left=3pt] {$1$} -- cycle;
\draw (1.5,0) -- (0,3);
\draw (0.5,0) node[below=3pt] {$\frac m2$} -- (0.5,2);
\end{tikzpicture}
    & \hspace{1cm}
\begin{tikzpicture}[inner sep=0pt,label distance = 1em, scale=1.3]
\draw[very thick,<->] (3.5,0) node[anchor=north west] {$x$} -| (0,3.5) node[anchor=south east] {$y$} ;
\filldraw[color=black!100, fill=black!30, thick, line join=bevel](0,0) -- (3,0) -- (2,0.5) -- (0,0.5) -- cycle;
\filldraw[color=black!100, fill=black!15, thick](0,0.5) -- (2,0.5) -- (0,1.5) -- cycle;
\filldraw[color=black!100, fill=black!0, thick, line join=bevel] (0,1.5)  -- (3,0) -- (0,3) -- cycle;
\node (A_2) [xshift=1cm, yshift=.3cm] {$A_2$};
\node (B_2) [xshift=1cm, yshift=1cm] {$B_2$};
\node (C_2) [xshift=1cm, yshift=2cm] {$C_2\cup D$};
\draw (0,0) node[below=3pt] {$0$} -- (0,1.5) node[left=3pt] (m) {$\frac12$} --
(0,3) node[left=3pt] {$1$} -- (3,0) node[below=3pt] {$1$} -- cycle;
\draw (0,1.5) -- (3,0);
\draw (0,0.5) node[left=3pt] {$\frac m2$} -- (2,0.5);
\end{tikzpicture}
\end{tabular}
\caption{Schematics for the partition of the simplex $\Delta_\circ$: definition of $A_i,B_i$ and $C_i\cup D$, $i=1,2$.}
\label{fig:schema:ABC}
\end{figure}}
With this notation, note that we have
\[
  \simap\setminus D=A_1\cup B_1\cup C_1 = A_2\cup B_2\cup C_2.
\]
Now, it is clear that
\begin{equation}\label{MZ_aux1}
  \int_\Omega\Psi^{(1)}_\bph(\bphi)\cdot(\bphi-\bphi_\Omega)=
  \int_{\{\bphi\in D\}}\Psi^{(1)}_\bph(\bphi)\cdot(\bphi-\bphi_\Omega)+
  \int_{\{\bphi\in \simap\setminus D\}}\Psi^{(1)}_\bph(\bphi)\cdot(\bphi-\bphi_\Omega)
\end{equation}
and let us estimate the two contributions on the right-hand side separately.
As far as the first one is concerned,
for every $x,y\in(0,1)$ we introduce the sections
\begin{alignat*}{2}
  &f_{1}^y:[0,1-y]\to[0,+\infty), \qquad
  &&f_{1}^y(r):=\Psi^{(1)}(r,y), \quad r\in\erre,\\
  &f_{2}^x:[0,1-x]\to[0,+\infty), \qquad
  &&f_{2}^x(r):=\Psi^{(1)}(x,r), \quad r\in\erre.
\end{alignat*}
Clearly, for all $x,y\in(0,1)$,
the functions $f_1^y$ and $f_2^x$
are convex, differentiable in $(0,1-y)$ and $(0,1-x)$ respectively,  and satisfy
\begin{align*}
  &(f_1^y)'(r)=(D_1\Psi^{(1)})(r,y) \qquad\forall\,r\in(0,1-y),\\
  &(f_2^x)'(r)=(D_2\Psi^{(1)})(x,r) \qquad\forall\,r\in(0,1-x).
\end{align*}
Moreover, their convex conjugates are defined as
\begin{alignat*}{2}
  (f_1^y)^*(z) &= \sup_{r\in[0,1-y]}\left\{rz-\Psi^{(1)}(r,y)\right\}, \quad z\in\erre,\\
  (f_2^x)^*(z) &= \sup_{r\in[0,1-x]}\left\{rz-\Psi^{(1)}(x,r)\right\}, \quad z\in\erre,
\end{alignat*}
and
thanks to the Young (in)equality we infer that,
for every $(x,y), (z,w)\in\simap$,
\begin{align*}
  (D_1\Psi^{(1)})(x,y)(x-z) &=
  (f_1^y)'(x)(x - z) = f_1^y(x) + (f_1^y)^*((f_1^y)'(x))
  -(f_1^y)'(x)z\\
  &\geq
  f_1^y(x) + (f_1^y)^*((f_1^y)'(x)) - f_1^y(z) - (f_1^y)^*((f_1^y)'(x))\\
  &\geq
  - \max_{\rr\in\simcl}\Psi^{(1)}(\rr),
\end{align*}
and
\begin{align*}
  (D_2\Psi^{(1)})(x,y)(y-w)&=
  (f_2^x)'(y)(y-w) = f_2^x(y) + (f_2^x)^*((f_2^x)'(y)) - (f_2^x)'(y)w\\
  &\geq f_2^x(y) + (f_2^x)^*((f_2^x)'(y)) -
  f_2^x(w) - (f_2^x)^*((f_2^x)'(y))\\
  &\geq - \max_{\rr\in\simcl}\Psi^{(1)}(\rr).
\end{align*}
It follows in particular that
\begin{align*}
  &\int_{\{\bphi\in D_1\}}(D_1\Psi^{(1)})(\bphi)(\phi^1-\phi^1_\Omega)
  +\int_{\{\bphi\in D_2\}}(D_2\Psi^{(1)})(\bphi)(\phi^2-\phi^2_\Omega)\\
  &\quad \geq - \max_{\rr\in\simcl}|\Psi^{(1)}(\rr){|}\left(|\{\bphi\in D_1\}| + |\{\bphi\in D_2\}|\right){.}
\end{align*}
Now, noting that $\{\bphi\in D_1\}\subset\{\phi^2\geq1/2\}$ and
$\{\bphi\in D_2\}\subset\{\phi^1\geq1/2\}$, we have
by the Chebyshev inequality and the positivity of $\phi^1,\phi^2$ that
\begin{align*}
  |\{\bphi\in D_1\}| + |\{\bphi\in D_2\}|
  &\leq |\{\phi^2\geq 1/2\}| + |\{\phi^1\geq 1/2\}|\\
  &\leq 2 \int_\Omega(\phi^1 + \phi^2) = 2|\Omega|(\phi^1_\Omega + \phi^2_\Omega).
\end{align*}
Putting this information together, we infer that
\begin{equation}\label{MZ_aux2}
\int_{\{\bphi\in D_1\}}(D_1\Psi^{(1)})(\bphi)(\phi^1-\phi^1_\Omega)
  +\int_{\{\bphi\in D_2\}}(D_2\Psi^{(1)})(\bphi)(\phi^2-\phi^2_\Omega)
\geq  - C(\phi^1_\Omega + \phi^2_\Omega),
\end{equation}
where the constant $C$ only depends on $\Psi^{(1)}$ and $\Omega$.
Furthermore, noting that the logarithmic potential \eqref{Flog}
satisfies $D_1\Psi^{(1)}(\bphi)\geq 0$ and $D_2\Psi^{(1)}(\bphi)\geq 0$ on $D$, we have
again by the Young inequality and the boundedness and positivity of $\Psi^{(1)}$ that
\begin{align*}
  &\phi^1_\Omega\int_{\{\bphi\in D_1\}}|(D_1\Psi^{(1)})(\bphi)|=
  \phi^1_\Omega\int_{\{\bphi\in D_1\}}(D_1\Psi^{(1)})(\bphi)=
  \phi^1_\Omega\int_{\{\bphi\in D_1\}}\Psi_\bph^{(1)}(\bphi)\cdot(1,0)\\
  &\quad \leq\phi_\Omega^1\left[\int_{\{\bphi\in D_1\}}(\Psi^{(1)})^*(\Psi_\bph^{(1)}(\bphi))
  +\int_{\{\bphi\in D_1\}}\Psi^{(1)}((1,0))
  \right]\\
  &\quad \leq \phi_\Omega^1\left[\int_{\{\bphi\in D_1\}}(\Psi^{(1)})^*(\Psi_\bph^{(1)}(\bphi))
  +\int_{\{\bphi\in D_1\}}\Psi^{(1)}(\bphi)\right]
  +\phi^1_\Omega|\Omega|\max_{\rr\in\simcl}|\Psi^{(1)}(\rr)|\\
  &\quad =\phi_\Omega^1\int_{\{\bphi\in D_1\}}\Psi_\bph^{(1)}(\bphi)\cdot\bphi
  +\phi^1_\Omega|\Omega|\max_{\rr\in\simcl}|\Psi^{(1)}(\rr)|\\
  &\quad =\phi_\Omega^1\int_{\{\bphi\in D_1\}}(D_1\Psi^{(1)})(\bphi)\phi^1
  +\phi_\Omega^1\int_{\{\bphi\in D_1\}}(D_2\Psi^{(1)})(\bphi)\phi^2
  +\phi^1_\Omega|\Omega|\max_{\rr\in\simcl}|\Psi^{(1)}(\rr)|.
\end{align*}
Now, since $\phi^1\leq\frac14$ on $D_1$, one readily has that
\[
  \phi_\Omega^1\int_{\{\bphi\in D_1\}}(D_1\Psi^{(1)})(\bphi)\phi^1
  \leq \frac14\phi_\Omega^1\int_{\{\bphi\in D_1\}}(D_1\Psi^{(1)})(\bphi)
  =\frac14\phi_\Omega^1\int_{\{\bphi\in D_1\}}|(D_1\Psi^{(1)})(\bphi)|.
\]
Also, since $\phi^1_\Omega+\phi^2_\Omega\leq\frac18$, it holds in particular that
$\phi^1_\Omega\leq\frac12 - \phi^2_\Omega$. Taking this into account,
together with the fact that $\phi^2\geq\frac12$ and $(D_2\Psi^{(1)})(\bphi)\geq0$
on $D_1$, we also deduce that
\begin{align*}
  \frac12\phi^1_\Omega\int_{\{\bphi\in D_1\}}|(D_2\Psi^{(1)})(\bphi)|&\leq
  \phi_\Omega^1\int_{\{\bphi\in D_1\}}(D_2\Psi^{(1)})(\bphi)\phi^2\\
  &\leq\left(\frac12 - \phi^2_\Omega\right)\int_{\{\bphi\in D_1\}}(D_2\Psi^{(1)})(\bphi)\\
  &\leq \int_{\{\bphi\in D_1\}}(D_2\Psi^{(1)})(\bphi)(\phi^2-\phi^2_\Omega).
\end{align*}
Putting this information together and rearranging the terms yield
\begin{align*}
  &\frac34\phi_\Omega^1\int_{\{\bphi\in D_1\}}|(D_1\Psi^{(1)})(\bphi)|
  +\frac12\phi^1_\Omega\int_{\{\bphi\in D_1\}}|(D_2\Psi^{(1)})(\bphi)|\\
  &\quad \leq 2\int_{\{\bphi\in D_1\}}(D_2\Psi^{(1)})(\bphi)(\phi^2-\phi^2_\Omega)
  +C\phi^1_\Omega,
\end{align*}
where the constant $C$ only depends on $\Psi^{(1)}$ and $\Omega$.
{Finally, we obtain} that
\begin{equation}\label{MZ_aux3}
  \frac14\phi^1_\Omega\int_{\{\bphi\in D_1\}}|\Psi_\bph^{(1)}(\bphi)|
  \leq \int_{\{\bphi\in D_1\}}(D_2\Psi^{(1)})(\bphi)(\phi^2-\phi^2_\Omega)
  +C\phi^1_\Omega.
\end{equation}
Clearly, it is not difficult to check that the same arguments on $D_2$ ensure that
\begin{equation}\label{MZ_aux4}
  \frac14\phi^2_\Omega\int_{\{\bphi\in D_2\}}|\Psi_\bph^{(1)}(\bphi)|
  \leq \int_{\{\bphi\in D_2\}}(D_1\Psi^{(1)})(\bphi)(\phi^1-\phi^1_\Omega)
  +C\phi^2_\Omega.
\end{equation}
At this point, summing up the estimates \eqref{MZ_aux2}, \eqref{MZ_aux3}, and \eqref{MZ_aux4},
we obtain after elementary rearrangements that
\begin{equation}\label{MZ_aux5}
  \frac14\min\{\phi^1_\Omega,\phi^2_\Omega\}\int_{\{\bphi\in D\}}|\Psi_\bph^{(1)}(\bphi)|
  \leq \int_{\{\bphi\in D\}}\Psi^{(1)}_\bph(\bphi)\cdot(\bphi-\bphi_\Omega)
  +C(\phi^1_\Omega+\phi^2_\Omega),
\end{equation}
where $C>0$ only depends of $\Psi^{(1)}$ and $\Omega$ (and not of $\bphi$).
This allows to estimate the first term on the right-hand side of \eqref{MZ_aux1}.
Let us focus now on the second term on the right-hand side of \eqref{MZ_aux1}:
to this end, we have
\begin{align*}
  \int_{\{\bphi\in \simap\setminus D\}}\Psi^{(1)}_\bph(\bphi)\cdot(\bphi-\bphi_\Omega)
  &=\sum_{i=1,2}\int_{\{\bphi\in \simap\setminus D\}}(D_i\Psi^{(1)})(\bphi)(\phi^i-\phi^i_\Omega)\\
  &=\sum_{i=1,2}\int_{\{\bphi\in A_i\cup B_i\cup C_i\}}(D_i\Psi^{(1)})(\bphi)(\phi^i-\phi^i_\Omega).
\end{align*}
Now, let $i\in\{1,2\}$ be arbitrary. By definition of $A_i$
we have $\phi^i\leq m/2$ on $\{\bphi\in A_i\}$, and
the logarithmic potential \eqref{Flog} satisfies
$(D_i\Psi^{(1)})(\bphi)\leq0$ on $\{\bphi\in A_i\}$. Consequently, we get
\begin{align*}
  \int_{\{\bphi\in A_i\}}(D_i\Psi^{(1)})(\bphi)(\phi^i-\phi^i_\Omega)
  &=\int_{\{\bphi\in A_i\}}|(D_i\Psi^{(1)})(\bphi)|(\phi^i_\Omega-\phi^i)\\
  &\geq \left(\phi^i_\Omega - \frac m2\right)\int_{\{\bphi\in A_i\}}|(D_i\Psi^{(1)})(\bphi)|\\
  &\geq \frac m2\int_{\{\bphi\in A_i\}}|(D_i\Psi^{(1)})(\bphi)|.
\end{align*}
Secondly, we note that on the set $\{\bphi\in B_i\}$ the function
$(D_i\Psi^{(1)})(\bphi)$ is negative and bounded. More precisely,
with the choice of the logarithmic potential \eqref{Flog}
one can easily check that it holds that
\begin{align*}
 \max_{\{\bphi\in B_1\}}|(D_1\Psi^{(1)})(\bphi)|
 &=\max\{(D_1\Psi^{(1)})(\rr): m/2<r_1\leq1/2,\quad 0 < r_2 \leq 1-2r_1\}\\
 &=|(D_1\Psi^{(1)})(m/2, 0)|=|\ln(m/2) - \ln (1-m/2)\,|\\
 &\leq |\ln(m/2)\,| + |\ln(1-m/2)\,| \leq 2|\ln(m/2)\,|,
\end{align*}
where the last inequality follows since $\frac m2 \leq 1 - \frac m2$,
and similarly
\begin{align*}
 \max_{\{\bphi\in B_2\}}|(D_2\Psi^{(1)})(\bphi)| \leq 2|\ln(m/2)\,|.
\end{align*}
We infer then that
\begin{align*}
  \int_{\{\bphi\in B_i\}}(D_i\Psi^{(1)})(\bphi)(\phi^i-\phi^i_\Omega) &=
  \int_{\{\bphi\in B_i\}}|(D_i\Psi^{(1)})(\bphi)|(\phi^i_\Omega-\phi^i) \\
  &\geq - \int_{\{\bphi\in B_i\}}|(D_i\Psi^{(1)})(\bphi)|\phi^i \\
  &\geq - 2|\Omega||\ln(m/2)\,|\phi^i_\Omega,
\end{align*}
as well as
\[
  \phi^i_\Omega \int_{\{\bphi\in B_i\}}|(D_i\Psi^{(1)})(\bphi)|
  \leq 2|\Omega||\ln(m/2)\,|\phi^i_\Omega.
\]
Eventually, noting that $(D_i\Psi^{(1)})(\bphi)\geq0$
and $\phi^i\geq \frac14$ on $\{\bphi\in C_i\}$,
and recalling that $\phi_\Omega^i\leq \frac14 - \phi_\Omega^i$,  we have
\begin{align*}
  \int_{\{\bphi\in C_i\}}(D_i\Psi^{(1)})(\bphi)(\phi^i-\phi^i_\Omega) &=
  \int_{\{\bphi\in C_i\}}|(D_i\Psi^{(1)})(\bphi)|(\phi^i-\phi^i_\Omega) \\
  &\geq \left(\frac14 - \phi^i_\Omega\right) \int_{\{\bphi\in C_i\}}|(D_i\Psi^{(1)})(\bphi)| \\
  &\geq \phi^i_\Omega \int_{\{\bphi\in C_i\}}|(D_i\Psi^{(1)})(\bphi)|.
\end{align*}
Taking everything into account, this yields for $i=1,2$ that
\[
  \int_{\{\bphi\in A_i\cup B_i\cup C_i\}}(D_i\Psi^{(1)})(\bphi)(\phi^i-\phi^i_\Omega)
  \geq \frac m2\int_{\{\bphi\in A_i\cup B_i \cup C_i\}}|(D_i\Psi^{(1)})(\bphi)|
  - 4|\Omega| |\ln(m/2)\,| \phi^i_\Omega.
\]
Summing up on $i$ these estimates we infer that
\begin{equation}\label{MZ_aux6}
\frac m2 \int_{\{\bphi\in \simap\setminus D\}}|\Psi^{(1)}_\bph(\bphi)| \leq
  \int_{\{\bphi\in \simap\setminus D\}}\Psi^{(1)}_\bph(\bphi)\cdot(\bphi-\bphi_\Omega)
  +C(\phi^1_\Omega + \phi^2_\Omega)|\ln(m/2)\,|
\end{equation}
where the constant $C$ only depends on $\Omega$ and $\Psi^{(1)}$. Combining
\eqref{MZ_aux5} with \eqref{MZ_aux6} the thesis follows.

\medskip
\noindent
{\bf Acknowledgements.} We thank the reviewers for their careful reading of the manuscript as well as for their useful comments. We also thank Andrea Poiatti who pointed out
the difficulties hidden in the proof of the separation property with the solvent $S$. The authors are members of Gruppo Nazionale per l'Analisi Ma\-te\-ma\-ti\-ca, la Probabilit\`{a} e le loro Applicazioni (GNAMPA), Istituto Nazionale di Alta Matematica (INdAM). The first and the third author have been partially funded by MIUR-PRIN research grant n. 2020F3NCPX.

\footnotesize


\begin{thebibliography}{10}

\bibitem{Al}
S. Alberti and D. Dormann.
Liquid-liquid phase separation in disease.
{\em Ann. Rev. Genet.} {\bf 53}, 171--194, 2019.

\bibitem{BTP}
C.P. Brangwynne, P. Tompa and R.V. Pappu.
Polymer physics of intracellular phase transitions.
{\em Nat. Phys.} {\bf 11}, 899--904, 2015.

\bibitem{CDLLS}
Q.~Cheng, P.~Delafrouz, J.~Liang, C.~Liu and J.~Shen.
Modeling and simulation of cell nuclear architecture reorganization process.
{\em J.~Comput.~Phys.} {\bf 449}, Paper No.~110808, 19, 2022.



\bibitem{CGH}
P. Colli, G. Gilardi and D. Hilhorst.
On a Cahn--Hilliard type phase field system related to tumor growth. {\em Discret. Cont. Dyn. Syst.} {\bf 35}, 2423--2442, 2015.

{
\bibitem{Colli-Vis90}
P.~Colli and A.~Visintin.
\newblock On a class of doubly nonlinear evolution equations.
\newblock {\em Comm. Partial Differential Equations}, {\bf 15}(5):737--756,
1990.
}

\bibitem{CL} V. Cristini and J. Lowengrub.
 Multiscale Modeling of Cancer: An Integrated Experimental and Mathematical
  Modeling Approach. Cambridge University Press, Leiden, 2010.


\bibitem{Dol}
E. Dolgin. What lava lamps and vinaigrette can teach us about cell
biology. {\em Nat.} {\bf 555}, 300--302, 2018.


\bibitem{EG97}
C.M. Elliott and H. Garcke. Diffusional phase transitions in multicomponent systems
with a concentration dependent mobility matrix. {\em Phys. D} {\bf 109}, 242--256, 1997.

\bibitem{EL91}
C.M. Elliott and S. Luckhaus.  A generalized diffusion equation for phase separation of a multi-component mixture with interfacial
free energy. SFB256 University Bonn, Preprint {\bf 195}, 1991.

\bibitem{FMN}
H. Fakih, R. Mghames and N. Nasreddine.
On the Cahn--Hilliard equation with mass source for biological applications.
{\em Comm. Pure Appl. Analysis} {\bf 20}(2), 495--510, 2021.

\bibitem{FG} S. Frigeri and M. Grasselli. Nonlocal Cahn--Hilliard--Navier--Stokes systems with singular potentials,
{\em Dyn. Partial Differ. Equ.} {\bf 9}, 273--304, 2012.


\bibitem{FGR}
S. Frigeri, M. Grasselli and E. Rocca. On a diffuse interface model of tumor growth. {\em European J. Appl. Math.} {\bf 26}, 215--243, 2015.


\bibitem{FLRS}
S. Frigeri, K.F. Lam, E. Rocca and G. Schimperna.
On a multi-species Cahn--Hilliard--Darcy tumor growth model with singular potentials.
{\em Commun. Math. Sci.} {\bf 16}, 821--856, 2018.


\bibitem{GGG}
C.G. Gal, A. Giorgini and M. Grasselli.
The separation property for 2D Cahn--Hilliard equations:
local, nonlocal, and fractional energy cases.
ResearchGate preprint, DOI: 10.13140/RG.2.2.30243.99367/1, 2022.


\bibitem{GL1}
H. Garcke and K.F. Lam.
Well-posedness of a Cahn--Hilliard system modelling tumour growth with chemotaxis and active transport.
{\em  European J. Appl. Math.} {\bf 28}, 284--316, 2017.


\bibitem{GLNS}
H. Garcke, K.F. Lam, R. N\"urnberg and E. Sitka.
A multiphase Cahn--Hilliard--Darcy model for tumour growth with
necrosis. {\em Math. Models Methods Appl. Sci.} {\bf 28}, 525--577, 2018.


\bibitem{GLSS}
H. Garcke, K.F. Lam, E. Sitka and V. Styles.
A {C}ahn--{H}illiard--{D}arcy model for tumour growth with chemotaxis and active transport.
{\em Math. Models Methods in Appl. Sci.} {\bf 26}, 1095--1148, 2016.


\bibitem{GFGN}
K. Gasior, M.G. Forest, A.S. Gladfelter and J.M. Newby.
Modeling the mechanisms by which coexisting biomolecular RNA-Protein condensates form.
{\em Bull. Math. Biol.} {\bf 82}:153, 2020.

\bibitem{GFGNrev}
K. Gasior, J. Zhao, G. McLaughlin, M.G. Forest, A.S. Gladfelter and J. Newby.
Partial demixing of RNA-protein complexes leads to intradroplet patterning in phase-separated biological condensates.
{\em Phys. Rev. E} {\bf 99}:012411, 2019.


\bibitem{GMS09}
G. Gilardi, A. Miranville and G. Schimperna.
On the Cahn--Hilliard equation with irregular potentials and dynamic boundary conditions.
{\em Commun. Pure. Appl. Anal.} {\bf 8}, 881--912, 2009.


\bibitem{GGM}
A. Giorgini, M. Grasselli and A. Miranville.
The Cahn--Hilliard--Oono equation with singular potential.
{\em Math. Models Methods Appl. Sci.} {\bf 27},
2485--2510, 2017.

\bibitem{GGW}
A. Giorgini, M. Grasselli and H. Wu.
The Cahn--Hilliard--Hele--Shaw system with singular potential.
{\em Ann. Inst. H. Poincar\'e Anal. Non Lin\'eaire} {\bf 35}, 1079--1118, 2018.


\bibitem{GLRS}
A. Giorgini, K.F. Lam, E. Rocca and G. Schimperna.
On the Existence of Strong Solutions to the Cahn--Hilliard--Darcy system with mass source.
{\em SIAM J. Math. Anal.} {\bf 54}, 737--767, 2022.

\bibitem{GDM} S.C. Glotzer, E.A. Di Marzio and M. Muthukumar.
Reaction-controlled morphology of phase separating mixtures.
{\em Phys. Rev. Lett.}
{\bf 74}, 2034--2037, 1995.

\bibitem{He}
J. He.
On the viscous Cahn--Hilliard--Oono system with chemotaxis and singular potential.
{\em Math. Methods Appl. Sci.} doi.org/10.1002/mma.8014

\bibitem{HWJ}
A.A. Hyman, C.A. Weber and F. J\"ulicher.
Liquid-liquid phase separation in biology.
{\em Annu. Rev. Cell. Dev. Biol. } {\bf 30}, 39--58, 2014.


\bibitem{IM}
A. Iuorio and S. Melchionna.
Long-time behavior of a nonlocal Cahn--Hilliard equation with reaction.
{\em Discrete Contin. Dyn. Syst.} {\bf 38}, 3765--3788, 2018.

\bibitem{Kar}
M. Kar et al. Phase-separating RNA-binding proteins form heterogeneous distributions of clusters in subsaturated solutions.
{\em Proc. Natl. Acad. Sci. U.S.A.} {\bf 119}, e2202222119, 2022.


\bibitem{KS2}
P. Knopf and A. Signori.
Existence of weak solutions to multiphase Cahn--Hilliard--Darcy and Cahn--Hilliard--Brinkman models for stratified tumor growth with chemotaxis and general source terms.
{\em Comm. Partial Differential Equations}, doi.org/10.1080/03605302.2021.1966803.


\bibitem{Lam}
K.F. Lam.
Global and exponential attractors for a Cahn--Hilliard equation with logarithmic potentials and mass source.
{\em J. Differential Equations} {\bf 312}, 237--275, 2022.



\bibitem{Mir}
A. Miranville. The Cahn--Hilliard equation with a nonlinear source term.
{\it J. Differential Equations} {\bf 294}, 88--117, 2021.

\bibitem{MT}
A. Miranville and R. Temam. On the Cahn--Hilliard--Oono--Navier--Stokes equations with singular potentials.
{\em Appl. Anal.} {\bf 95}, 2609--2624, 2016.

\bibitem{MZ}
A. Miranville and S. Zelik.
Robust exponential attractors for Cahn--Hilliard type equations with singular potentials.
{\em Math. Methods Appl. Sci.} {\bf 27}, 545--582, 2004.

\bibitem{SB}
Y. Shin and C.P. Brangwynne.
Liquid phase condensation in cell physiology and disease.
{\em Science } {\bf 357}, eaaf4328, 2017.


\bibitem{SS}
L. Scarpa and A. Signori,
On a class of non-local phase-field models for tumor growth with possibly singular potentials, chemotaxis, and active transport.
{\em Nonlinearity}  {\bf 34}, 3199--3250, 2021.




\bibitem{S}
G. Schimperna.
On the Cahn--Hilliard--Darcy system with mass source and strongly separating potential.
{\em Discrete Contin. Dyn. Syst. Ser. S}, doi:10.3934/dcdss.2022008.


\bibitem{Simon}
J. Simon.
{Compact sets in the space $L^p(0,T; B)$},
{\em Ann. Mat. Pura Appl.}
{\bf 146}(4), 65--96, 1987.



\end{thebibliography}
\end{document}